\newlength\fwidth
\pgfplotsset{compat=newest}
\pgfplotsset{plot coordinates/math parser=false}
\pgfplotsset{compat = 1.3}
\newtheorem{proposition}{Proposition}[section]
\newtheorem{theorem}[proposition]{Theorem}
\newtheorem{lemma}[proposition]{Lemma}
\newtheorem{definition}[proposition]{Definition}
\newtheorem{corollary}[proposition]{Corollary}
\newtheorem{remark}[proposition]{Remark}
\newtheorem{lgrthm}[proposition]{Algorithm}
\numberwithin{equation}{section}
\renewenvironment{proof}{\smallskip\noindent\emph{\textbf{Proof.}}%
  \hspace{1pt}}{\hspace{-5pt}{\nobreak\quad\nobreak\hfill\nobreak%
    $\square$\vspace{2pt}\par}\smallskip\goodbreak}
\newcommand{\C}[1]{\mathbf{C^{#1}}}
\renewcommand{\L}[1]{\mathbf{L^#1}}
\newcommand{\BV}{\mathbf{BV}}
\newcommand{\W}[2]{{\mathbf{W}^{#1,#2}}}
\newcommand{\modulo}[1]{{\left|#1\right|}}
\newcommand{\norma}[1]{{\left\|#1\right\|}}
\newcommand{\reali}{{\mathbb{R}}}
\newcommand{\R}{\mathbb R}
\newcommand{\N}{{\mathbb N}}
\newcommand{\interi}{{\mathbb{Z}}}
\renewcommand{\epsilon}{\varepsilon}
\renewcommand{\phi}{\varphi}
\renewcommand{\theta}{\vartheta}
\newcommand{\tv}{\mathinner{\rm TV}}
\renewcommand{\d}[1]{\mathinner{\mathrm{d}{#1}}}
\newcommand{\brho}{\boldsymbol{\rho}}
\newcommand{\dt}{{\Delta t}}
\newcommand{\dx}{{\Delta x}}
\newcommand{\dy}{{\Delta y}}
\newcommand{\rh}[1]{\rho^{c,n}_{#1}}
\newcommand{\x}[1]{x_{#1}}
\newcommand{\vs}{v^{stat}}
\newcommand{\vd}{v_c^{dyn}}
\DeclareMathOperator{\sgn}{sgn}
\newcommand{\del}{\partial}
\newcommand{\be}{\begin{equation}}
\newcommand{\ee}{\end{equation}}
\definecolor{ffqqqq}{rgb}{1.,0.,0.}
\definecolor{uuuuuu}{rgb}{0.26666666666666666,0.26666666666666666,0.26666666666666666}
\let\@fnsymbol\@arabic
\newcommand\appendix@section[1]{%
\refstepcounter{section}%
\orig@section*{Appendix \@Alph\c@section: #1}%
\addcontentsline{toc}{section}{Appendix \@Alph\c@section: #1}%
}
\let\orig@section\section
\g@addto@macro\appendix{\let\section\appendix@section}
\title{A nonlocal model for heterogeneous material flow \\on conveyor belts}
\author{Paola Goatin\footnotemark[1] \and  Simone G\"ottlich\footnotemark[2]  \and  Fabian Ziegler\footnotemark[2]}
\date{\today}
\definecolor{Green}{rgb}{0.13, 0.55, 0.13}
\begin{document}

\maketitle
\footnotetext[1]{Universit\'e C\^ote d'Azur, Inria,
  CNRS, LJAD, 2004 route des Lucioles - BP 93, 06902 Sophia Antipolis
  Cedex, France. E-mail: \texttt{paola.goatin@inria.fr}}
 \footnotetext[3]{University of Mannheim, Department of Mathematics,
  68131 Mannheim, Germany. Email: \texttt{goettlich@uni-mannheim.de
    }}

\begin{abstract}
In this paper, a ﬁnite volume approximation scheme is used to solve a nonlocal macroscopic material ﬂow model in two space dimensions, accounting for the presence of boundaries in the nonlocal terms. Based on a previous result for the scalar case, we extend the setting to a system of heterogeneous material on bounded domains. We prove the convergence of the approximate solutions constructed  using the Roe scheme with dimensional splitting, where the major challenge lies in the treatment of the discontinuity occurring in the ﬂux function. We consider a regularized version of the flux function and establish BV bounds for a Roe-type scheme. Numerical tests show a good agreement with microscopic simulations. 
\end{abstract}

\noindent

  \medskip

  \noindent\textit{Keywords:} nonlocal systems of conservation laws; heterogeneous material flow; Roe scheme; boundary conditions.

  \noindent\textit{2010~Mathematics Subject Classification:} 35L65, 65M12

  \medskip

\section{Introduction}
\label{sec:intro}

In this paper, we consider the Cauchy problem for a  system of nonlocal conservation laws in two space dimensions describing the movement of $N\in\N$ classes of objects of different sizes and corresponding maximal densities $R_c>0$, $c=1,\ldots,N$, on a domain $\Omega\subseteq\mathbb{R}^2$. This extends a previous result for the scalar case~\cite{Rossietal}. The initial-boundary value problem for the particle densities $\brho=(\rho^1,\dots,\rho^N)$ as a function of time $t$ and position $(x,y)\in\Omega$ is given by
\begin{equation}
  \label{eq:1}
  \left\{
    \begin{array}{l@{\qquad}r@{\,}c@{\,}l}
      \del_t \rho^c + \nabla \cdot \left(\rho^c \left( \boldsymbol{\vs} (x,y) + \boldsymbol{\vd} [\brho](x,y)\right)\right) =0,
      & (t,x,y)
      &\in
      & [0,T] \times \Omega,
      \\
      \rho^c(0,x,y) = \rho_o^c(x,y),
      & (x,y)
      &\in
      & \Omega,
    \end{array}
  \right. c=1,\ldots,N,
\end{equation}
for any $T>0$, where for weighting factors $\epsilon_c>0$ 
\begin{equation}
  \label{eq:vdyn}
  r=\sum_{c=1}^N \alpha_c\rho^c,
  \qquad
  \boldsymbol{\vd}[\boldsymbol{\rho}] = H (\tilde{\eta}*r - r_{\max}) \, \boldsymbol{I_c}[r]
  \qquad
    \mbox{with }
    \boldsymbol{I_c}[r]
    =
    - \epsilon_c \dfrac{\nabla(\eta_c * r)}{\sqrt{1+ \norma{\nabla(\eta_c * r)}^2}}.
  \end{equation}
  Above, $H$ denotes the Heaviside function, which becomes active whenever the maximal density $r_{\max}>0$ %\textcolor{blue}{(expressed in terms of the smallest object density, see \texttt{pce} concept in traffic)} 
  is exceeded, {while $\tilde{\eta},\eta_c$ are positive, smooth mollifiers depending on the object sizes. 
  The maximal density $r_{\max}$ is expressed in terms of the smallest object density. This concept, referred to as \texttt{pce}, originates from traffic flow modeling, where it is employed to account for the varying sizes of vehicles on the road~\cite{van2008fastlane}. 
  %Smaller particles take less space and therefore the maximal density of smaller particles is higher than for larger particles.  The weighted, total density $r$ is defined by summing over the weighted particle densities where the weights are computed by comparing the maximal density of particle $i$ with the maximal density of the smallest particle class. Without loss of generality we assume $c=1$ is the class of smallest particles. 
  %
  Smaller objects occupy less space, and thus their maximal density is higher than that of larger objects. The weighted total density $r=\sum_{c=1}^N\alpha_c\rho^c$ is defined as the sum of the individual particle densities $\rho^c$, rescaled by a weight $\alpha_c$. The weights $\alpha_c$ are determined by comparing the maximum density $r_{\max}^c$ of particle class $c$ to that of the smallest particle class. Without loss of generality, we assume that $c=1$ denotes the class of the smallest particles, so that $r_{\max}^1>r_{\max}^c$ for all $c>1$. We then set $r_{\max}=r_{\max}^1$ and $\alpha_c=r_{\max}^1/r_{\max}^c$. \\  
  %The static velocity $\boldsymbol{\vs}$ is the same for all classes.
% Observe that $\norma{I_j (\rho)}_{\L\infty} \leq \epsilon$, $j=1,2$.
The scalar version of~\ref{eq:1} was introduced in~\cite{original}, to model the flow of particles on a conveyor belt. The velocity vector field induced by the conveyor belt is denoted by 
$\boldsymbol{\vs}$ and is independent of both time and the density $\boldsymbol{\rho}$, and common to all particle classes. For an illustration of the geometry of the conveyor belt, we refer to Section 4, Figure~\ref{img:staticvelocityfield}.
Below the maximal density $r_{\max}$, parts are transported with the velocity of the conveyor belt. If the density exceeds $r_{max}$, the dynamic velocity vector field $\boldsymbol{\vd}$ becomes active and models the collision of particles via the operator $\boldsymbol{I_c}[r]$.
The negative gradient of the convolution $\eta_c * r$ pushes the mass towards lower density regions. The particular choice of the collision operator $\boldsymbol{I_c}(r)$ was introduced in~\cite{ColomboGaravelloMercier2012} to describe crowd dynamics.
In particular, the norm appearing in the denominator is the Euclidean norm in $\reali^2$, so that the denominator acts as a smooth normalisation factor. We choose the Euclidian norm to reflect the numerical implementation. 

To model boundaries in the form of walls and other obstacles, we follow an approach introduced in~\cite{burger2020} and later developed in \cite{goatin2023, GoatinRossi2024}, by incorporating the information in the convolution kernel. For this, assume $\Omega^c=\mathbb{R}^2\setminus\Omega$ to be a compact set consisting of a finite number $M\in\mathbb{N}$ of connected components $\Omega^c=\Omega^c_1\cup\cdots\cup\Omega^c_M$. We then replace $r$ in the nonlocal operator by the augmented total density $r_\Omega=\sum_{c=1}^{N+1} \alpha_c\rho_\Omega^c$ with $\alpha_{N+1}=1$ and $\rho_0^c=0$ for $x\in\Omega^c, c=1,\dots,N$ and where $\rho^c_\Omega:\mathbb{R}^2\rightarrow\mathbb{R}_+$, is given by
\begin{equation}
\rho^c_\Omega:= 
                     \begin{cases}
                       \rho^c & \mbox{if } c=1,\dots,N,
                       \\
                       \sum_{l=1}^M R_{N+l}\chi_{\Omega_l^c} & \mbox{if } c=N+1,
                       \end{cases}
\end{equation}
with $R_{N+l}>0, l=1,\dots,M$, big enough so that for $\brho_{\boldsymbol{\Omega}}=(\rho_\Omega^1,\dots,\rho_\Omega^{N+1})$ it holds 
\[
\hbox{$\left(\boldsymbol{\vs} (x,y) + \boldsymbol{\vd} [\brho_{\boldsymbol{\Omega}}](x,y)\right)\cdot \mathbf{n}(x,y)\leq 0$ for all $(x,y)\in\partial\Omega$,
$t\geq0$, }
\]
with $\mathbf{n}$ being the outward normal to $\Omega$. Thus, the domain $\Omega$ is invariant and we can rewrite the Cauchy problem \eqref{eq:1} on $\R^2$:
\begin{equation}
  \label{eq:extended}
  \left\{
    \begin{array}{l@{\qquad}r@{\,}c@{\,}l}
      \del_t \rho^c + \nabla \cdot \left(\rho^c \left( \boldsymbol{\vs} (x,y) + \boldsymbol{\vd} [\brho_{\boldsymbol{\Omega}}](x,y)\right)\right) =0,
      & (t,x,y)
      &\in
      & [0,T] \times \reali^2,
      \\
      \rho^c(0,x,y) = \rho_o^c(x,y),
      & (x,y)
      &\in
      & \reali^2,
    \end{array}
  \right. c=1,\ldots,N.
\end{equation}

Conservation laws with nonlocal flux functions have been introduced in the literature to describe transport phenomena accounting for nonlocal interaction effects among agents, such as road traffic flow~\cite{BlandinGoatin2016} or pedestrian dynamics~\cite{ColomboGaravelloMercier2012}.
General well-posedness results have been provided by~\cite{ACT2015} in the scalar one-dimensional case, while~\cite{ACG2015, GoatinRossi2024} deal with systems of nonlocal conservation laws in multi-space dimensions. Even if the results in \cite{ACG2015} apply to our problem~\eqref{eq:1}, estimates in~\cite{ACG2015}  where obtained for general flux functions using finite volume approximate solutions constructed via Lax-Friedrichs scheme. In \cite{Rossietal} sharp estimates for the Roe scheme were derived, which is known to give less diffusive solutions in the case of nonlocal problems in multi-D. The aim of the present paper is to extend the results for the Roe scheme in~\cite{Rossietal} to the system case, including the presence of obstacles. This mainly involves some further assumptions on the interaction term and some changes in the estimates in Appendix \ref{sec:teclem}. {We recall that, more recently, an alternative approach to prove well-posedness of some nonlocal conservation laws, based on the method of characteristics and on Banach's  fixed-point theorem, was proposed in~\cite{KP2017,KPS2018}. In particular, no entropy condition is required to prove uniqueness and stability of solutions, showing that weak solutions are indeed unique. Nevertheless, these results rely on stronger regularity assumptions on the flux function, and cannot give information on the convergence of the finite volume numerical schemes studied in this paper. Furthermore, to our knowledge, these results have not been extended to multi-dimensional systems so far.}

We finally remark that, even if the original model proposes the use of the discontinuous Heaviside function, the stability of the numerical schemes requires a smooth approximation of it. Together with the introduction of the mollifier $\Tilde{\eta}$, the maximal density constraint is not satisfied. Indeed, the $\L\infty$-norm of the derivatives $H'$ and $\nabla\tilde{\eta}$ are included in several estimates (for instance in the CFL condition~\eqref{eq:CFLroe-v2} that guarantees the $\BV$-estimates), which blow up with it.

The paper is organised as follows: for a better overview, we introduce our main results in Section~\ref{sec:MR} and we postpone the proof of convergence of the approximate solutions constructed by the Roe scheme in Section~\ref{sec:exist}. This includes a proof of the Lipschitz continuous dependence on the initial data in Section~\ref{sec:lipdep}. Section~\ref{sec:results} is devoted to discussion of the numerical tests.

\section{Main results}
\label{sec:MR}
%Throughout, $I$ denotes the time interval $[0,T]$, for any $T>0$.

Throughout the paper, we will denote
$\mathcal{I} (r,s) := [\min\left\{r,s\right\}, \max\left\{r,s
\right\}]$, for any $r, s \in \R$.  We require the following
assumptions to hold for $c\in\{1,\dots,N\}$:
\begin{enumerate}[label={$\boldsymbol{(\Omega.1)}$}]
\item \label{Omega1}The domain $\Omega\subseteq \reali^2$ is a non-empty open set with smooth boundary $\partial\Omega$, so that the outward normal $\mathbf{n}(x,y)$ is uniquely defined for all $(x,y)\in\partial\Omega$. Moreover, we assume that $\Omega^c=\reali^2\setminus\Omega$ is a compact set consisting of a finite number $M$ of connected components.
\end{enumerate}
\begin{enumerate}[label={$\boldsymbol{(\Omega.2)}$}]
\item \label{Omega2} For all $k=1,\dots,N$ the vector fields $\boldsymbol{\vs}+\boldsymbol{v^{dyn}_c}$ point inward along the boundary $\partial\Omega$ of $\Omega$, i.e $\boldsymbol{(\vs+v^{dyn}_c)}\cdot \mathbf{n} (x,y)\leq 0$ for all $(x,y)\in\partial\Omega, t\geq0$.
\end{enumerate}
\begin{enumerate}[label={$\boldsymbol{(v)}$}]
\item \label{vs} $\boldsymbol{\vs} \in (\W2\infty\cap\C2)(\reali^2; \reali^2)$.
\end{enumerate}
\begin{enumerate}[label={$\boldsymbol{(H)}$}]
\item \label{H} The function $H$ is a smooth approximation of the
  Heaviside function, so that its derivative $H'$ is bounded. In particular, we denote by $L_H$ the Lipschitz constant of the function $H$:
  \begin{equation}
    \label{eq:L}
    L_H = \norma{H'}_{\L\infty(\reali)},
  \end{equation}
\end{enumerate}
\begin{enumerate}[label={$\boldsymbol{(\eta)}$}]
\item \label{eta}
  $\tilde{\eta}$,$\eta_c \in (\C3 \cap \W3\infty )(\reali^2; \reali^+)$ for all $c\in\{1,\dots,N\}$.
\end{enumerate}

\begin{remark}
We note that assumption~\ref{Omega1} excludes the degenerate case of connected components $\Omega^c_i$, $i\in\{1,\ldots,M\}$, reducing to points. 
\end{remark}
\begin{remark}
    Assumptions~\ref{H} and~\ref{eta} are necessary to derive the BV estimates for the numerical discretization in Proposition~\ref{prop:bvroe}. As the regularization tends to the discontinuous case,~\eqref{eq:CFLroe-v2} converges to $0$ and~\eqref{eq:K1defroe}, \eqref{eq:K2defroe} blow up.
\end{remark}

Recall the definition of solution to the Cauchy problem~\eqref{eq:1},
see also~\cite{ACG2015, ACT2015, ColomboGaravelloMercier2012}.

\begin{definition}
  \label{def:sol}
 A map
  $\brho: [0,T]\to \L\infty (\Omega; \reali^N)$ is a solution
  to~\eqref{eq:1} with initial datum $\brho_o \in \L\infty (\Omega; \reali^N_+)$, if, for all $c\in\{1,\dots,N\}$, $\rho^c$ is a Kru\v zkov solution to
  \begin{equation}
    \label{eq:Kr}
    \left\{
      \begin{array}{l@{\qquad}r@{\,}c@{\,}l}
        \del_t \rho^c + \nabla \cdot \left(\rho^c V^c(t, x, y) \right) = 0,
        & (t,x,y)
        &\in
        & [0,T] \times \reali^2,
        \\
        \rho^c(0,x,y) = \rho^c_o(x,y),
        & (x,y)
        &\in
        & \reali^2,
      \end{array}
    \right.
  \end{equation}
  with 
  \begin{align*}
  V^c(t,x, y) = \boldsymbol{\vs}(x,y) - \epsilon_c 
  \, H(\tilde{\eta}*r - r_{\max}) \dfrac{\nabla\left(\eta_c * r_\Omega\right)}{\sqrt{1+
      \norma{\nabla\left(\eta_c * r_\Omega\right)}^2}}(x,y),
      \end{align*}
      where $r_\Omega=\sum_{k=1}^{N+1}\alpha_k\rho_\Omega^k$, $(t,x,y)\in[0,T]\times\R^2$.
\end{definition}
\noindent Above, for the definition of Kru\v zkov solution we refer
to~\cite[Definition~1]{Kruzkov}.

\begin{theorem}
  \label{thm:main}
  Let $\brho_o \in (\L\infty \cap \BV) (\Omega; \reali^N_+)$ and
  assumptions~\ref{vs}, \ref{H} and~\ref{eta} hold. Then, for all
  $T>0$, there exists a unique entropy weak  solution
  $\brho \in (\L\infty \cap \BV) ([0,T] \times \Omega; \reali^N_+)$ to
  problem~\eqref{eq:1}. Moreover, the following estimates hold: for
  all $t \in [0,T]$ and all $c\in\{1,...,N\}$
  \begin{align*}
    \norma{\brho(t,\cdot)}_{\L1(\Omega; \reali^N)} =  \
    & \norma{\brho_o}_{\L1(\Omega; \reali^N)},
    \\
    \norma{\rho^c(t,\cdot)}_{\L\infty(\Omega; \reali)} \leq  \
    & \norma{\rho_o^c}_{\L\infty(\Omega; \reali)} \, e^{\mathcal{C}^c_{\infty} \, t},
    \\
    \tv(\rho^c(t,\cdot)) \leq \
    & \tv(\rho_o^c) \, e^{2 \, t \, \mathcal{K}^c_1}
      + \frac{2\, \mathcal{K}^c_2}{\mathcal{K}^c_1} \left(e^{2 \, t \, \mathcal{K}^c_1} - 1\right),
    \\
    \norma{\brho(t,\cdot) - \brho(t - \tau,\cdot)}_{\L1(\Omega; \reali^N)} \leq \
    &  2 \, \mathcal{C}_t (t) \, \tau, \quad \mbox { for } \tau >0,
  \end{align*}
  where $\mathcal{C}^c_\infty$ is defined in~\eqref{eq:Cinf},
  $\mathcal{K}^c_1$ is defined in~\eqref{eq:K1defroe}, $\mathcal{K}^c_2$
  is defined in~\eqref{eq:K2defroe} and $\mathcal{C}_t(t)$ is as
  in~\eqref{eq:Ct}.
\end{theorem}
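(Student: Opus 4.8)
The plan is to follow the standard program for non-local balance laws: build approximate solutions with the finite volume Roe scheme, extract uniform a priori bounds, pass to the limit by compactness, identify the limit as a Kru\v zkov entropy solution, and close with a stability estimate that yields uniqueness. First I would fix a Cartesian mesh and define piecewise-constant approximations $\brho^n$ through the dimensional-splitting Roe updates, evaluating the non-local velocity $V^c$ at the current discrete density so that each fractional step is an explicit, conservative scheme. Because assumption~\ref{Omega2} forces $\boldsymbol{(\vs+v^{dyn}_c)}\cdot \mathbf{n}(x,y)\leq 0$ on $\partial\Omega$, no discrete mass leaves $\Omega$, which immediately delivers positivity and the conservation identity $\norma{\brho(t,\cdot)}_{\L1(\reali^2;\reali^N)}=\norma{\brho_o}_{\L1(\reali^2;\reali^N)}$.

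The heart of the argument is a collection of uniform a priori estimates, deferred to Appendix~\ref{sec:teclem}. A discrete maximum principle, using the boundedness of the velocity divergence, produces the exponential $\L\infty$-bound with rate $\mathcal{C}^c_\infty$. The crucial and most delicate estimate is the $\BV$-bound: here I would track the total-variation increment across one time step and show that the variation of the non-local field is controlled by $\norma{\brho}_{\L1}$ and $\norma{\brho}_{\L\infty}$, exploiting the $\W3\infty$-regularity of the kernels $\tilde{\eta},\eta_c$ from~\ref{eta} and the boundedness of $H'$ from~\ref{H} (whose constant $L_H$ enters explicitly). A discrete Gronwall inequality then yields the stated bound with constants $\mathcal{K}^c_1,\mathcal{K}^c_2$, and the $\L1$-continuity in time follows from the $\BV$-bound together with the explicit scheme structure, giving the modulus $\mathcal{C}_t(t)$.

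With these uniform estimates, Helly's compactness theorem provides a subsequence converging in $\Lloc{1}([0,T]\times\reali^2)$ to a limit $\brho$. To identify $\brho$ as a solution in the sense of Definition~\ref{def:sol}, I would run a Lax--Wendroff type argument: the discrete entropy inequalities satisfied by the Roe scheme pass to the limit, while the non-local velocity converges along the subsequence because the convolutions $\tilde{\eta}*r_\Omega$ and $\eta_c*r_\Omega$, together with the smooth composition through $H$ and the normalisation factor, are continuous with respect to $\L1$-convergence of the densities. Thus each $\rho^c$ is a Kru\v zkov solution of the transport equation with the self-consistent field $V^c$.

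Uniqueness and the Lipschitz continuous dependence on the initial data are obtained by a doubling-of-variables estimate adapted to the non-local setting, carried out in Section~\ref{sec:lipdep}: the key point is that $V^c$ depends Lipschitz-continuously on $\brho$ in the $\L1$-norm, so that Kru\v zkov's stability inequality produces a source term controlled by $\norma{\brho-\tilde{\brho}}_{\L1}$, which a Gronwall argument absorbs into an exponential factor. The main obstacle is the $\BV$-estimate, both because the smoothed Heaviside forces the large constant $L_H$ into every bound and because the obstacle contribution $\rho^{N+1}_\Omega$ inside $r_\Omega$ must be handled carefully, so that its variation concentrated near $\partial\Omega$ does not destroy the estimate.
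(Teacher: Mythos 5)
Your proposal follows essentially the same route as the paper: approximate solutions built by the dimensional-splitting Roe scheme, the a priori $\L1$, $\L\infty$ and $\BV$ bounds closed by discrete Gronwall arguments, Helly-type compactness, discrete entropy inequalities passed to the limit \`a la Lax--Wendroff, and uniqueness via Lipschitz-continuous dependence on the initial data (which the paper obtains in Proposition~\ref{prop:lipdep} by verifying the hypotheses of a theorem from the cited work of Goatin--Rossi, rather than by redoing a doubling-of-variables argument from scratch -- a presentational rather than substantive difference).

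One step of your outline is justified incorrectly, however. Positivity of the discrete solution does not follow ``immediately'' from the inward-pointing condition~\ref{Omega2}: that condition only guarantees invariance of the domain $\Omega$, i.e.\ that the support of the solution does not enter the obstacle region. Positivity of an explicit finite volume scheme is a separate issue, and in the paper it is proved under the CFL condition~\eqref{eq:CFLroe}, with the $\BV$ estimate requiring the stricter CFL condition~\eqref{eq:CFLroe-v2}. Without imposing these restrictions on $\dt$ the scheme can produce negative values, and both the positivity claim and the identity $\norma{\brho(t,\cdot)}_{\L1}=\norma{\brho_o}_{\L1}$ (which needs positivity, since conservativity alone only preserves the integral, not the $\L1$ norm) would fail. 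This is a fixable omission, not a wrong approach: adding the CFL hypotheses restores your argument, and the remainder of your outline -- including the role of $L_H$ in every constant and the need to handle the obstacle contribution inside $r_\Omega$ in the non-local estimates -- matches the paper's treatment.
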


The proof of existence of solutions is based on the constructions of a converging sequence of approximate solutions and follows standard guidelines, see e.g.~\cite[Theorem~2.3]{ACG2015}. In the literature, this usually relies on Lax-Friedrichs type schemes. In this paper, we derive the necessary compactness estimates for a Roe-type scheme.
The bounds presented in Theorem~\ref{thm:main} are obtained by passing
to the limit in the corresponding discrete bounds.
%The proof of Theorem~\ref{thm:main} is standard,
%see~\cite[Theorem~2.3]{ACG2015}, which refers to~\cite{sanders}. The
Uniqueness is ensured by Proposition~\ref{prop:lipdep}, which provides
the Lipschitz continuous dependence estimate of solutions
to~\eqref{eq:1} on the initial data.

\section{Existence}
\label{sec:exist}

We introduce the uniform mesh of width $\dx$ along the $x$-axis and $\dy$
along the $y$-axis, and a time step $\dt$ subject to a CFL condition,
specified later on. For $k \in \interi$ set
\begin{align*}
  x_k = \ & (k-1/2) \dx,
  &
    y_k = \ & (k-1/2) \dy,
  \\
  x_{k+1/2} = \ & k \dx,
  &
    y_{k+1/2} = \ & k \dy,
\end{align*}
where $(x_{i+\frac12},y_j)$ and $(x_{i}, y_{j+1/2})$ denote the cells
interfaces and $(x_i, y_j)$ are the cells centers. Set
$N_T = \lfloor T/\dt \rfloor$ and let $t^n = n \, \dt$ ,
$n=0, \ldots, N_T$, be the time mesh. Set $\lambda_x = \dt/\dx$ and
$\lambda_y = \dt / \dy$. 
For the sake of shortness and to improve readability, we use  the notation
$\mathbf{x}_{i,j} = (x_i, y_j)$ in the proofs.

We approximate the initial datum as follows: for $i, \, j \in \interi$ and all $c\in\{1,\dots,N\}$
\begin{displaymath}
  \rho_{i,j}^{c, 0}= \frac{1}{\dx \, \dy} \int_{x_{i-\sfrac12}}^{x_{i+\sfrac12}}
  \int_{y_{i-\sfrac12}}^{y_{i+\sfrac12}} \rho_o^c(x,y) \d{x} \d{y},
\end{displaymath}
and we define a piece-wise constant solution to~\eqref{eq:1} as
\begin{equation}
  \label{eq:rhodelta}
  \rho_{\Delta}^c (t,x,y ) =  \rho_{i,j}^{c,n}
  \quad \mbox{ for } \quad
  \left\{
    \begin{array}{r@{\;}c@{\;}l}
      t & \in &[t^n, t^{n+1}[ \,,
      \\
      x & \in & [x_{i-1/2}, x_{i+1/2}[ \,,
      \\
      y & \in & [y_{j-1/2}, y_{j+1/2}[ \,,
    \end{array}
  \right.
  \quad \mbox{ where } \quad
  \begin{array}{r@{\;}c@{\;}l}
    n & = & 0, \ldots, N_T-1,
    \\
    i & \in & \interi,
    \\
    j & \in & \interi.
  \end{array}
\end{equation}
We compute the convolution products through the following quadrature formula, for $i=1,2$,
\begin{equation} \label{eq:conv}
    \left(\partial_i\eta_c * \left(\sum_{k=1}^{N+1}\alpha_c\rho_\Omega^k\right) \right) (x_i,y_j) =
  \dx \, \dy \sum_{h, \ell \in \interi} \left(\sum_{k=1}^{N+1}\alpha_c\rho^k_{\Omega,h,\ell}\right) \, \partial_i \eta_c(x_{i-h}, y_{j-\ell}),
\end{equation}
where $\partial_1 \eta_c=\partial_x\eta_c$ and
$\partial_2\eta_c=\partial_y\eta_c$, with $\rho^k$ being the density of class $k$.
The approximate solution is then computed through a modified Roe-type scheme with dimensional splitting, as in~\cite{Rossietal}. In other words, the two-dimensional problem is reduced to the sequential resolution of a pair of one-dimensional equations. To this end, we first solve the one-dimensional equation in the $x$-direction
\begin{align}
    \label{eq:scheme1roe}
      \rho_{i,j}^{c,n+1/2} &= \rh{i,j} - \lambda_x \left[
      V_1(\x{i+\sfrac12,j},\rh{i,j}, \rh{i+1,j})
      -
      V_1(\x{i-\sfrac12,j},\rh{i-1,j}, \rh{i,j})
      \right.
    \\ \nonumber
      & \left.
       + F^c (\rh{i,j}, \rh{i+1,j}, J^{c,n}_1(\x{i+\sfrac12,j}))
      - F^c (\rh{i-1,j}, \rh{i,j}, J^{c,n}_1 ( \x{i-\sfrac12,j}))\right]
\end{align}
and then use $\rho_{i,j}^{c,n+1/2}$ to solve in the $y$-direction,
\begin{align}
    \label{eq:scheme2roe}
    \rho_{i,j}^{c,n+1} &=
      \rho_{i,j}^{c,n+1/2} - \lambda_y \left[
      V_2(\x{i, j+\sfrac12},\rh{i,j}, \rh{i,j+1})
      -
      V_2(\x{i, j-\sfrac12},\rh{i,j-1}, \rh{i,j})
      \right.
    \\ \nonumber
    &
      \left.
      + F^c (\rh{i,j}, \rh{i,j+1}, J^{c,n}_2(\x{i, j+\sfrac12}))
      - F^c (\rh{i,j-1}, \rh{i,j}, J^{c,n}_2 ( \x{i,j-\sfrac12}))\right],
\end{align}
where $J^{c,n}_1(t,x,y)$ and $J^{c,n}_2(t,x,y)$ are given by
\begin{align*}
    \boldsymbol{v}_c^{dyn} (\rho^{1,n},...,\rho^{N+1,n}) (x,y) = \left(J_1^{c,n} (x,y), \, J_2^{c,n}
  (x,y)\right).
\end{align*}
and where we define 
\begin{align}
    \label{eq:fx1}
    & V_1 (x,y,u,w) = \vs_1(x,y) \, u + \min\{0, \, \vs_1(x,y)\} (w-u)
    \\
    \label{eq:fx2}
    &F^c (u,w, J_d^c(t,x,y)) =
      J_d^c (t, x,y)u + \min\{0, J_d^c(t,x,y)\}\left(w -u\right)
    \\ \label{eq:gx1}
    & V_2 (x,y,u,w) = \vs_2(x,y) \, u + \min\{0, \, \vs_2(x,y)\} (w-u)
\end{align}
for $n=0,\ldots, N_T-1$, $d=1,2$. 
\begin{comment}
\textcolor{blue}{
\begin{lgrthm}
  \label{alg:2}
  \begin{align}
    \label{eq:fx1}
    & V_1 (x,y,u,w) = \vs_1(x,y) \, u + \min\{0, \, \vs_1(x,y)\} (w-u)
    \\
    \label{eq:fx2}
    &F^c (u,w, J^c(t,x,y)) =
      J^c (t, x,y)u + \min\{0, J^c(t,x,y)\}\left(w -u\right)
    \\ \label{eq:gx1}
    & V_2 (x,y,u,w) = \vs_2(x,y) \, u + \min\{0, \, \vs_2(x,y)\} (w-u)
    \\
    &\texttt{for } n=0,\ldots, N_T-1 \nonumber
    \\
    \label{eq:scheme1roe}
    & \qquad
      \rho_{i,j}^{c,n+1/2} = \rh{i,j} - \lambda_x \left[
      V_1(\x{i+\sfrac12,j},\rh{i,j}, \rh{i+1,j})
      -
      V_1(\x{i-\sfrac12,j},\rh{i-1,j}, \rh{i,j})
      \right.
    \\ \nonumber
    &
      \qquad\quad\qquad
      \left.
      + F^c (\rh{i,j}, \rh{i+1,j}, J^{c,n}_1(\x{i+\sfrac12,j}))
      - F^c (\rh{i-1,j}, \rh{i,j}, J^{c,n}_1 ( \x{i-\sfrac12,j}))\right]
    \\ \label{eq:scheme2roe}
    & \qquad \rho_{i,j}^{c,n+1} =
      \rho_{i,j}^{c,n+1/2} - \lambda_y \left[
      V_2(\x{i, j+\sfrac12},\rh{i,j}, \rh{i,j+1})
      -
      V_2(\x{i, j-\sfrac12},\rh{i,j-1}, \rh{i,j})
      \right.
    \\ \nonumber
    &
      \qquad\quad\qquad
      \left.
      + F^c (\rh{i,j}, \rh{i,j+1}, J^{c,n}_2(\x{i, j+\sfrac12}))
      - F^c (\rh{i,j-1}, \rh{i,j}, J^{c,n}_2 ( \x{i,j-\sfrac12}))\right]
    \\
    & \texttt{end} \nonumber
  \end{align}
  % }
\end{lgrthm}
}
\end{comment}
Remark that the choice of evaluating the numerical flux at $t^n$ for both fractional steps allows to save computational time, because the convolution products~\eqref{eq:conv} are computed only once per time step.

\subsection{Positivity}
\label{sec:pos}

In the case of positive initial datum, we prove that under a suitable
CFL condition the approximate solution to~\eqref{eq:1} constructed via~\eqref{eq:scheme1roe} and~\eqref{eq:scheme2roe} 
%the Algorithm~\ref{alg:2} 
remains positive.

\begin{lemma}{\bf (Positivity)} Let
  $\brho_o \in \L\infty (\Omega; \reali^N_+)$. Let~\ref{vs} %\ref{H},
  and~\ref{eta} hold. Assume that
\begin{align}
   \label{eq:CFLroe}
   \lambda_x \leq \
   & \frac{1}{2(\epsilon_{\max} + \norma{\vs_1}_{\L\infty})},
   &
   \lambda_y \leq \
   & \frac{1}{2(\epsilon_{\max} + \norma{\vs_2}_{\L\infty})}.
\end{align}
where $\epsilon_{\max}:=\max_{c}\epsilon_c$. Then, for all $c\in\{1,\dots,N\}$, $t>0$ and $(x,y) \in \reali^2$, the piece-wise constant
approximate solution $\rho_\Delta^c$ constructed
through~\eqref{eq:scheme1roe} and~\eqref{eq:scheme2roe}  
% Algorithm~\ref{alg:2} 
is such that
$\rho_\Delta^c (t,x,y) \geq 0$.
\end{lemma}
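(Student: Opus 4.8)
The plan is to argue by induction on the time index $n$, showing that each of the two fractional steps of Algorithm~\ref{alg:2} sends a nonnegative grid function to a nonnegative one. The base case $\rho^{c,0}_{i,j}\ge0$ is immediate, since each $\rho^{c,0}_{i,j}$ is a cell average of the nonnegative datum $\rho_o^c$. For the inductive step I would rewrite each update as a linear combination of the three involved neighbouring values from the preceding level and check that every coefficient is nonnegative; positivity of the new value then follows at once.

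Concretely, for the $x$-step~\eqref{eq:scheme1roe} I abbreviate $a^{\pm}=\vs_1(\x{i\pm\sfrac12,j})$ and $b^{\pm}=J^{c,n}_1(\x{i\pm\sfrac12,j})$ and use the elementary identity $s-\min\{0,s\}=\max\{0,s\}$ to split each numerical flux into its upwind contributions, e.g. $V_1(\x{i+\sfrac12,j},\rh{i,j},\rh{i+1,j})=\max\{0,a^+\}\,\rh{i,j}+\min\{0,a^+\}\,\rh{i+1,j}$, and similarly for the $F^c$ terms. Collecting terms turns~\eqref{eq:scheme1roe} into
\begin{align*}
  \rho^{c,n+1/2}_{i,j}
  &= \Big(1-\lambda_x\big[\max\{0,a^+\}-\min\{0,a^-\}+\max\{0,b^+\}-\min\{0,b^-\}\big]\Big)\rh{i,j}\\
  &\quad+\lambda_x\big[\max\{0,a^-\}+\max\{0,b^-\}\big]\rh{i-1,j}
   -\lambda_x\big[\min\{0,a^+\}+\min\{0,b^+\}\big]\rh{i+1,j}.
\end{align*}
The coefficients of $\rh{i-1,j}$ and $\rh{i+1,j}$ are nonnegative by construction (they are $\lambda_x$ times a $\max\{0,\cdot\}$, resp. $-\min\{0,\cdot\}$, contribution), so the whole argument reduces to showing that the coefficient of $\rh{i,j}$ is nonnegative.

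That reduces to bounding the bracket. Each of $\max\{0,a^+\}$ and $-\min\{0,a^-\}$ is at most $\norma{\vs_1}_{\L\infty}$; for the dynamic part, the normalisation built into $\boldsymbol{I_c}$ gives $\snr{\boldsymbol{I_c}[r]}=\epsilon_c\,\snr{\nabla(\eta_c*r)}/\sqrt{1+\norma{\nabla(\eta_c*r)}^2}<\epsilon_c$, and since $0\le H\le1$ this yields the pointwise bound $\snr{J^{c,n}_1}\le\epsilon_c\le\epsilon_{max}$, so $\snr{b^{\pm}}\le\epsilon_{max}$. The bracket is therefore at most $2(\norma{\vs_1}_{\L\infty}+\epsilon_{max})$, and the first CFL condition in~\eqref{eq:CFLroe} gives $2\lambda_x(\norma{\vs_1}_{\L\infty}+\epsilon_{max})\le1$, so the central coefficient is nonnegative and $\rho^{c,n+1/2}_{i,j}\ge0$. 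The $y$-step~\eqref{eq:scheme2roe} has an identical structure with $\lambda_y,\vs_2,J^{c,n}_2$ and the intermediate values $\rho^{c,n+1/2}$ in the roles of $\lambda_x,\vs_1,J^{c,n}_1$ and $\rho^{c,n}$; the second CFL condition then gives $\rho^{c,n+1}_{i,j}\ge0$, closing the induction.

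I expect the only genuinely delicate point to be the uniform bound $\snr{J^{c,n}_i}\le\epsilon_{max}$, which fixes the precise constant in~\eqref{eq:CFLroe}. It is essential that this bound relies solely on the bounded normalisation factor in $\boldsymbol{I_c}$ and on $\norma{H}_{\L\infty}\le1$, so that it is insensitive to the spatial variability of the velocity field, to whether the convolution is computed exactly or through the quadrature~\eqref{eq:conv}, and to the obstacle contribution carried by $\rho^{N+1}_\Omega$ inside $r_\Omega$. I would also note, as a sanity check, that the three coefficients sum to $1-\lambda_x\big[(a^+-a^-)+(b^+-b^-)\big]$ rather than to $1$, reflecting that the velocity field is not divergence-free; this is harmless here, as positivity only requires nonnegative coefficients and not a convex combination.
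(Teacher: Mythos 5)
Your proof is correct and takes essentially the same route as the paper's: induction on $n$, the upwind splitting of $V_1$ and $F^c$, the uniform bound $\modulo{J^{c,n}_\ell}\leq\epsilon_c\leq\epsilon_{max}$ coming from the normalisation in $\boldsymbol{I_c}$ and $0\leq H\leq 1$, and the CFL condition~\eqref{eq:CFLroe}. The only difference is organisational: you write the update as a linear combination of the three stencil values with nonnegative coefficients, whereas the paper bounds the outgoing flux above by $\left(\norma{\vs_1}_{\L\infty}+\epsilon_c\right)\rh{i,j}$ and the incoming flux below by $-\left(\norma{\vs_1}_{\L\infty}+\epsilon_c\right)\rh{i,j}$, discarding the neighbour contributions, which have the favourable sign.
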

\begin{proof}
  Fix $n$ between $0$ and $N_T-1$ and assume that $\rh{i,j} \geq 0$
  for all $i, \, j \in \interi$. We use the notation
\begin{equation}
  \label{eq:notvJ}
  v_{i\pm 1/2} =  \vs_1 (x_{i\pm 1/2,j}).
\end{equation}
  Consider~\eqref{eq:scheme1roe}, with
  the notation~\eqref{eq:fx1} and~\eqref{eq:fx2}, and observe that:
  \begin{align*}
     V_1 (x_{i+1/2,j}, \rh{i,j}, \rh{i+1,j})
      + F^c (\rh{i,j}, \rh{i+1,j}, J^{c,n}_1(\x{i+1/2,j}))
    & \leq 
     v_{i+1/2} \, \rh{i,j}
      + J^{c,n}_1(x_{i+1/2,j}) \rh{i,j}
      \\
     & \leq
      \left(\norma{\vs_1}_{\L\infty} + \epsilon_c \right) \rh{i,j},
    \\
     V_1 (x_{i-1/2,j}, \rh{i-1,j}, \rh{i,j})
      + F^c (\rh{i-1,j}, \rh{i,j}, J^{c,n}_1 (\x{i-1/2,j}))
    &\geq 
    v_{i-1/2} \, \rh{i,j}
      + J^{c,n}_1(x_{i-1/2,j}) \rh{i,j}
      \\
     & \geq
      - \left(\norma{\vs_1}_{\L\infty} + \epsilon_c \right) \rh{i,j},
  \end{align*}
  Therefore, by~\eqref{eq:scheme1roe},
  \begin{displaymath}
    \rho_{i,j}^{c,n+1/2}
    \geq \rh{i,j} - 2\, \lambda_x \left(\norma{\vs_1}_{\L\infty} + \epsilon_c\right) \rh{i,j}
    \geq 0,
  \end{displaymath}
  thanks to the CFL condition~\eqref{eq:CFLroe}. Starting
  from~\eqref{eq:scheme2roe}, an analogous argument shows that
  $\rho_{i,j}^{c,n+1}\geq 0$, concluding the proof.
\end{proof}

\subsection{\texorpdfstring{$\L1$}{L 1} bound}
\label{sec:l1}

The following result on the $\L1$ bound follows from the conservation
property of the Roe scheme.

\begin{lemma}{\bf ($\L1$ bound)}\label{lem:L1roe} Let
  $\brho_o \in \L\infty (\Omega; \reali^N_+)$. Let~\ref{vs}, %\ref{H},
  \ref{eta} and~\eqref{eq:CFLroe} hold. Then, for all $c\in\{1,...,N\}$, $t>0$ and
  $(x,y) \in \reali^2$, $\rho^c_\Delta$ constructed
  through ~\eqref{eq:scheme1roe} and~\eqref{eq:scheme2roe} 
  %Algorithm~\ref{alg:2} 
  satisfies
  \begin{equation}
    \label{eq:l1}
    \norma{\rho^c_\Delta(t,\cdot,\cdot)}_{\L1(\Omega)} = \norma{\rho^c_o}_{\L1(\Omega)}.
  \end{equation}
\end{lemma}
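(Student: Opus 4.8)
The plan is to establish the $\L1$ conservation by summing the scheme over all cells and exploiting the telescoping structure of the numerical fluxes. Since we already know from the Positivity Lemma that $\rho^{c,n}_{i,j} \geq 0$ under the CFL condition~\eqref{eq:CFLroe}, the $\L1$ norm coincides with the sum of the $\rho^{c,n}_{i,j}$ weighted by the cell area $\dx\,\dy$; hence it suffices to show that this sum is preserved across each fractional step.

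First I would treat the $x$-sweep~\eqref{eq:scheme1roe}. Multiplying by $\dx\,\dy$ and summing over $i,j\in\interi$, the update reads
\begin{displaymath}
  \dx\,\dy\sum_{i,j}\rho^{c,n+1/2}_{i,j}
  = \dx\,\dy\sum_{i,j}\rho^{c,n}_{i,j}
  - \dt\,\dy\sum_{i,j}\bigl[\mathcal{F}_{i+\sfrac12,j}-\mathcal{F}_{i-\sfrac12,j}\bigr],
\end{displaymath}
where I abbreviate the total interface flux by $\mathcal{F}_{i+\sfrac12,j}=V_1(\x{i+\sfrac12,j},\rho^{c,n}_{i,j},\rho^{c,n}_{i+1,j})+F^c(\rho^{c,n}_{i,j},\rho^{c,n}_{i+1,j},J^{c,n}_1(\x{i+\sfrac12,j}))$. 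For each fixed $j$ the inner sum over $i$ telescopes, so that the flux contribution vanishes provided the boundary terms at $i\to\pm\infty$ are zero. The same argument applied to~\eqref{eq:scheme2roe} gives conservation of the total mass across the $y$-sweep, whence $\sum_{i,j}\rho^{c,n+1}_{i,j}=\sum_{i,j}\rho^{c,n}_{i,j}$, and iterating over $n$ together with the exactness of the initial-datum average yields~\eqref{eq:l1}.

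The main obstacle, and the only point requiring care, is justifying that the telescoping sums genuinely collapse, i.e.\ that the fluxes decay at spatial infinity so there are no leftover boundary contributions. This follows because $\brho_o$ has support contained in $\Omega$ (a bounded set by~\ref{Omega1}) and the finite propagation speed of the scheme---controlled by $\norma{\vs}_{\L\infty}$ and $\epsilon_{max}$---keeps each $\rho^{c,n}_{\Delta}$ compactly supported for every finite $n$; thus only finitely many $\rho^{c,n}_{i,j}$ are nonzero and the sums are finite, with $\mathcal{F}_{i\pm\sfrac12,j}\to 0$ as $\snr{i}\to\infty$. I would verify that this compact support is propagated by each step of Algorithm~\ref{alg:2}, which is immediate since the stencil is three-point in each direction. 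Combining the nonnegativity, the telescoping identity, and the compact support gives the claimed equality of $\L1$ norms.
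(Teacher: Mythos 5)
Your overall strategy is exactly what the paper intends: the paper in fact omits the proof, remarking only that the result ``follows from the conservation property of the Roe scheme'', i.e.\ from the conservative (telescoping) form of \eqref{eq:scheme1roe}--\eqref{eq:scheme2roe} combined with positivity, which is precisely what you spell out. The telescoping identity, the weight bookkeeping ($\dx\,\dy\,\lambda_x = \dt\,\dy$), and the use of the Positivity Lemma to identify the $\L1$ norm with the total (signed) mass are all correct.

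There is, however, one genuine error in the step where you justify the collapse of the telescoping sums. You claim that $\Omega$ is ``a bounded set by~\ref{Omega1}''; this is false. Assumption~\ref{Omega1} states that the \emph{complement} $\Omega^c = \reali^2 \setminus \Omega$ is compact (it models finitely many obstacles), so $\Omega$ itself is unbounded, and the inclusion of the support of $\brho_o$ in $\Omega$ gives no compactness whatsoever. Under the stated hypotheses ($\brho_o \in \L\infty(\Omega;\reali^N_+)$ only) the initial datum need not even be integrable, so your finite-propagation-of-compact-support argument has nothing to start from. The repair is standard and does not need compact support: assume $\rho^c_o \in \L1$ (otherwise~\eqref{eq:l1} reads $\infty = \infty$ and there is nothing to prove), note that each step of Algorithm~\ref{alg:2} maps summable grid functions to summable grid functions, and bound the interface flux, using~\eqref{eq:Jinf} and positivity, by
\begin{displaymath}
  \modulo{\mathcal{F}_{i+\sfrac12,j}} \leq \left(\norma{\vs_1}_{\L\infty} + \epsilon_c\right)\left(\rho^{c,n}_{i,j} + \rho^{c,n}_{i+1,j}\right),
\end{displaymath}
so that for each fixed $j$ the sequence $\left(\mathcal{F}_{i+\sfrac12,j}\right)_{i\in\interi}$ is absolutely summable; an absolutely summable telescoping series sums to zero, and no decay-at-infinity or compact-support argument is required. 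With this substitution (and the analogous one for the $y$-sweep) your proof is complete and coincides with the argument the paper tacitly invokes.
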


\subsection{\texorpdfstring{$\L\infty$}{L infinity} bound}
\label{sec:linf}

\begin{lemma}{\bf ($\L\infty$~bound)}\label{lem:Linfroe} Let
  $\brho_o \in \L\infty (\Omega; \reali^N_+)$. Let~\ref{vs}, %\ref{H},
  \ref{eta} and~\eqref{eq:CFLroe} hold. Then, for all $c\in\{0,...,N\}$, $t>0$ and
  $(x,y) \in \reali^2$, $\rho^c_\Delta$ constructed
  through~\eqref{eq:scheme1roe} and~\eqref{eq:scheme2roe}  
  %Algorithm~\ref{alg:2} 
  satisfies
  \begin{equation}
    \label{eq:linf}
     \norma{\rho^c_\Delta (t,\cdot,\cdot)}_{\L\infty (\Omega)}
    \leq  \norma{\rho^c_o}_{\L\infty(\Omega)} \, e^{\mathcal{C}^c_\infty \, t},
  \end{equation}
  where
  \begin{equation}
    \label{eq:Cinf}
     \mathcal{C}^c_\infty =
      \norma{\partial_x \vs_1}_{\L\infty}
      + \norma{\partial_y \vs_2}_{\L\infty}
      + 2 \, \epsilon_c \,\left(2 \,
      \norma{\nabla^2 \eta_c}_{\L\infty} 
      + L_H \norma{\nabla\Tilde{\eta}_c}_{\L\infty}
      \right) \norma{r_{\Omega,o}}_{\L1}.
  \end{equation}
\end{lemma}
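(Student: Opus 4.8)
The plan is to exploit the positivity already established (in the positivity lemma) together with the upwind, monotone structure of Algorithm~\ref{alg:2}: each fractional step can be written as a combination of the three relevant neighbouring values with \emph{nonnegative} coefficients, so that, since $\rho^{c,n}_{i,j}\ge0$, the new value is bounded by the sum of those coefficients times $\max_{k,\ell}\rho^{c,n}_{k,\ell}$. The whole estimate then reduces to measuring how far this coefficient sum is from $1$. Concretely, I would fix $n$ and rewrite the $x$-sweep~\eqref{eq:scheme1roe}. Setting $a_\pm:=\vs_1(\x{i\pm\sfrac12,j})$ and $b_\pm:=J^{c,n}_1(\x{i\pm\sfrac12,j})$ and using $s-\min\{0,s\}=\max\{0,s\}$ and $-\min\{0,s\}=\max\{0,-s\}$, the definitions~\eqref{eq:fx1}--\eqref{eq:fx2} give
\[
  \rho^{c,n+1/2}_{i,j} = A_-\,\rh{i-1,j} + A_0\,\rh{i,j} + A_+\,\rh{i+1,j},
\]
with $A_+=\lambda_x(\max\{0,-a_+\}+\max\{0,-b_+\})\ge0$, $A_-=\lambda_x(\max\{0,a_-\}+\max\{0,b_-\})\ge0$, and $A_0=1-\lambda_x(\max\{0,a_+\}+\max\{0,b_+\})-\lambda_x(\max\{0,-a_-\}+\max\{0,-b_-\})\ge0$ under the CFL condition~\eqref{eq:CFLroe}, exactly as in the positivity proof (using $|b_\pm|\le\epsilon_c\le\epsilon_{max}$).

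The key algebraic observation is that $\max\{0,s\}-\max\{0,-s\}=s$ collapses the coefficient sum to
\[
  A_-+A_0+A_+ = 1-\lambda_x\big[(a_+-a_-)+(b_+-b_-)\big].
\]
Since $|a_+-a_-|\le\dx\,\norma{\partial_x\vs_1}_{\L\infty}$ and $|b_+-b_-|\le\dx\,\norma{\partial_x J^{c,n}_1}_{\L\infty}$, positivity yields $\rho^{c,n+1/2}_{i,j}\le\big(1+\dt(\norma{\partial_x\vs_1}_{\L\infty}+\norma{\partial_x J^{c,n}_1}_{\L\infty})\big)\norma{\rho^c_\Delta(t^n,\cdot,\cdot)}_{\L\infty}$. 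The identical computation on the $y$-sweep~\eqref{eq:scheme2roe} produces the factor $1+\dt(\norma{\partial_y\vs_2}_{\L\infty}+\norma{\partial_y J^{c,n}_2}_{\L\infty})$. Applying $1+x\le e^x$ to each half-step gives $\norma{\rho^c_\Delta(t^{n+1},\cdot,\cdot)}_{\L\infty}\le e^{\dt\,\mathcal{C}^c_\infty}\norma{\rho^c_\Delta(t^n,\cdot,\cdot)}_{\L\infty}$, provided $\norma{\partial_x J^{c,n}_1}_{\L\infty}+\norma{\partial_y J^{c,n}_2}_{\L\infty}$ is bounded by the dynamic part of~\eqref{eq:Cinf}; telescoping over $n$ then yields $e^{\mathcal{C}^c_\infty t^n}\le e^{\mathcal{C}^c_\infty t}$, while $\norma{\rho^c_\Delta(0,\cdot,\cdot)}_{\L\infty}\le\norma{\rho^c_o}_{\L\infty}$ because cell averaging does not increase the supremum.

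The main obstacle is precisely the uniform-in-$n$ bound on $\norma{\partial_x J^{c,n}_1}_{\L\infty}+\norma{\partial_y J^{c,n}_2}_{\L\infty}$, the spatial Lipschitz constant of the discrete dynamic velocity. Writing $J^{c,n}_1=H(\tilde{\eta}*r^n_\Omega-r_{max})\,I_{c,1}$ with $I_{c,1}=-\epsilon_c\,p/\sqrt{1+p^2+q^2}$ and $(p,q)=\nabla(\eta_c*r^n_\Omega)$, the product rule gives $\partial_x J^{c,n}_1=H'\,(\partial_x\tilde{\eta}*r^n_\Omega)\,I_{c,1}+H\,\partial_x I_{c,1}$. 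I would bound $|H'|\le L_H$, $0\le H\le1$, $|I_{c,1}|\le\epsilon_c$, and use that the normalisation map $\boldsymbol{z}\mapsto\boldsymbol{z}/\sqrt{1+|\boldsymbol{z}|^2}$ is $1$-Lipschitz, so each of its components has unit-bounded gradient and hence $|\partial_x I_{c,1}|\le\epsilon_c\big(|\partial_{xx}(\eta_c*r^n_\Omega)|+|\partial_{xy}(\eta_c*r^n_\Omega)|\big)$. The second derivatives of the convolution are controlled by the (discrete) Young inequality applied to the quadrature~\eqref{eq:conv}, e.g.\ $|\partial_{xx}(\eta_c*r^n_\Omega)|\le\norma{\nabla^2\eta_c}_{\L\infty}\norma{r^n_\Omega}_{\L1}$, and similarly $|\partial_x\tilde{\eta}*r^n_\Omega|\le\norma{\nabla\tilde{\eta}}_{\L\infty}\norma{r^n_\Omega}_{\L1}$. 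This gives $\norma{\partial_x J^{c,n}_1}_{\L\infty}\le\epsilon_c\big(2\norma{\nabla^2\eta_c}_{\L\infty}+L_H\norma{\nabla\tilde{\eta}}_{\L\infty}\big)\norma{r^n_\Omega}_{\L1}$, and the symmetric estimate for $\partial_y J^{c,n}_2$; summing the two accounts for the outer factor $2$ in~\eqref{eq:Cinf}.

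The final point, which makes the constant uniform in $n$ and $\dt$, is that $\norma{r^n_\Omega}_{\L1}=\norma{r_{\Omega,o}}_{\L1}$ for every $n$: the masses $\norma{\rho^{k,n}_\Delta}_{\L1}$ are preserved by Lemma~\ref{lem:L1roe} for $k\le N$, while the obstacle contribution $\rho^{N+1}_\Omega$ is constant in time by construction, so the weighted augmented mass does not change along the iteration (and the same bound covers the non-augmented $r^n$ appearing in the Heaviside, since $\norma{r^n}_{\L1}\le\norma{r^n_\Omega}_{\L1}$). Substituting this identity into the per-step estimate closes the argument and reproduces exactly the constant $\mathcal{C}^c_\infty$ of~\eqref{eq:Cinf}.
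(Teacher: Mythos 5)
Your proof is correct and produces exactly the constant $\mathcal{C}^c_\infty$ of \eqref{eq:Cinf}; the difference with the paper lies in how the upwind structure of the scheme is handled. The paper's proof singles out the sign configuration claimed to maximize \eqref{eq:scheme1roe} (namely $v_{i+1/2}<0$, $v_{i-1/2}\ge 0$, $J_1^{c,n}(x_{i+1/2})<0$, $J_1^{c,n}(x_{i-1/2})\ge 0$), discards the negative contributions using positivity, and then estimates the two surviving increments by the mean value theorem for $\vs_1$ and by the appendix estimate \eqref{eq:Jx} for $J_1^{c,n}$. You instead rewrite each sweep as $A_-\,\rh{i-1,j}+A_0\,\rh{i,j}+A_+\,\rh{i+1,j}$ with nonnegative coefficients (nonnegativity of $A_0$ being exactly the CFL condition \eqref{eq:CFLroe}, as in the positivity lemma) and use $\max\{0,s\}-\max\{0,-s\}=s$ to compute the coefficient sum exactly as $1-\lambda_x\left[(a_+-a_-)+(b_+-b_-)\right]$, with $a_\pm=\vs_1(x_{i\pm1/2,j})$, $b_\pm=J^{c,n}_1(x_{i\pm1/2,j})$. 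This treats all sign configurations at once and replaces the paper's unproved ``attains its maximum for\dots'' step by an algebraic identity, making explicit the monotone structure of the Roe scheme that also underlies the discrete entropy inequality of Section~\ref{sec:dei}. The second difference is that you re-derive the spatial Lipschitz bound on $J^{c,n}_1$ by differentiating the smooth extension of the quadrature \eqref{eq:conv} (product rule on the $H$-factor and the normalized gradient, $1$-Lipschitzness of $z\mapsto z/\sqrt{1+\norma{z}^2}$, discrete Young inequality), whereas the paper invokes the finite-difference estimate \eqref{eq:Jx}; the appendix proof of \eqref{eq:Jx} is precisely the discrete counterpart of your computation, its three pieces \eqref{eq:2}, \eqref{eq:2b}, \eqref{eq:2c} corresponding to your $H\,\partial_x I_{c,1}$ and $H'\cdot(\partial_x\tilde{\eta}*r^n_\Omega)\,I_{c,1}$ terms. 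The paper's route is shorter inside the lemma because \eqref{eq:Jx} is needed anyway for the $\BV$ estimates; yours is self-contained and avoids the worst-case reasoning. Both arguments close identically: mass conservation (Lemma~\ref{lem:L1roe} for the particle classes plus time-independence of the obstacle density) gives $\norma{r_\Omega^n}_{\L1}=\norma{r_{\Omega,o}}_{\L1}$ uniformly in $n$ --- a point the paper uses silently and you state explicitly --- after which $1+x\le e^x$ and telescoping over the fractional steps yield \eqref{eq:linf}.
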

\begin{proof}
  % Even though the $\L\infty$ bound obtained via Algorithm~\ref{alg:2}
  % is exactly the same as that obtained through Algorithm~\ref{alg:1},
  % the proof is different.
  % Indeed,
  Fix $c\in\{1,\dots,N\}$. Setting $r_{\Omega,o}=\sum_{k=1}^{N+1}\alpha_k\,\rho^k_{\Omega,o}$ and exploiting the notation introduced in~\eqref{eq:notvJ}, we
  observe that to get an upper estimate for \eqref{eq:scheme1roe} we can assume $v_{i+1/2} < 0$, $v_{i-1/2}\geq 0$, $ J_1^{c,n}(x_{i+1/2})< 0$ and $ J_1^{c,n}(x_{i-1/2})\geq 0$. In this case
  \begin{align*}
    \rho_{i,j}^{c,n+1/2}%  = \
    % & \rh{i} - \lambda_x \left[
    %   F^n (x_{1+1/2}, \rh{i}, \rh{i+1}) - F^n(x_{i-1/2}, \rh{i-1},
    %   \rh{i})
    % \right]
    % \\
    \leq \
       \rh{i,j} &- \lambda_x \left(
        \rh{i+1,j} \, v_{i+1/2} + J_1^{c,n}(x_{i+1/2,j}) \, \rh{i+1,j}
        \right) \\
        &+ \lambda_x \left(
        \rh{i-1,j} \, v_{i-1/2} + J_1^{c,n}(x_{i-1/2,j}) \, \rh{i-1,j}
        \right),
  \end{align*}
  where we used the positivity of  $\rh{i,j}$
  and discarded all the terms giving a negative contribution.  Moreover, since $v_{i+1/2} < 0$ and $v_{i-1/2}\geq 0$,
  \begin{align*}
    \lambda_x \left(
    - \rh{i+1,j} \, v_{i+1/2} + \rh{i-1,j} \, v_{i-1/2}
    \right)
    \leq \
    & \lambda_x \, \norma{\rho^{c,n}}_{\L\infty} \left(
      -v_{i+1/2} + v_{i-1/2}
      \right)
    \\
    = \
    &
      \lambda_x \, \norma{\rho^{c,n}}_{\L\infty} \, (- \dx) \, \partial_x \vs_1(\hat x_i)
  \end{align*}
  with $\hat x_i \in \, ]x_{i-1/2}, x_{i+1/2}[$. In a similar way,
  since $J_1^{c,n}(x_{i+1/2})< 0$ and $J_1^{c,n}(x_{i-1/2})\geq 0$, we
  get
  \begin{align*}
  \begin{split}
    &\lambda_x \left(
    -  J_1^{c,n}(x_{i+1/2}) \, \rh{i+1,j} +  J_1^{c,n}(x_{i-1/2}) \, \rh{i-1,j}
    \right)
    \\
    &\quad
    \leq \
     \lambda_x \, \norma{\rho^{c,n}}_{\L\infty} \left(
      -  J_1^{c,n}(x_{i+1/2,j}) +  J_1^{c,n}(x_{i-1/2,j})
      \right)
    \\
    &\quad
    \leq \
    \lambda_x \, \norma{\rho^{c,n}}_{\L\infty} \, 
    \epsilon_c \, \dx \,\left(2 \, 
      \norma{\nabla^2 \eta_c}_{\L\infty} 
      + L_H \norma{\nabla\tilde{\eta}_c}_{\L\infty}
      \right)\norma{r_\Omega^{n}}_{\L1},
    \end{split}
  \end{align*}
  thanks to~\eqref{eq:Jx}.  Therefore,
  \begin{align*}
    \rho_{i,j}^{c,n+1/2} \leq \
    \norma{\rho^{c,n}}_{\L\infty} \left[
    1 + \dt \left(
    \norma{\partial_x \vs_1}_{\L\infty} + \epsilon_c \,\left(2 \,
      \norma{\nabla^2 \eta_c}_{\L\infty} 
      + L_H \norma{\nabla\Tilde{\eta}_c}_{\L\infty}
      \right) \norma{r_{\Omega,o}}_{\L1}
    \right)
    \right].
  \end{align*} 
  In a similar way we get
  \begin{align*}
    \rho_{i,j}^{c,n+1} \leq \
    \norma{\rho^{c,n+1/2}}_{\L\infty} \!\! \left[
    1 + \dt \left(
    \norma{\partial_y \vs_2}_{\L\infty} + \epsilon_c \left(2 
      \norma{\nabla^2 \eta_c}_{\L\infty} \!\! 
      + L_H \norma{\nabla\Tilde{\eta}_c}_{\L\infty}
      \right) \norma{r_{\Omega,o}}_{\L1}
    \right)
    \right].
  \end{align*}
  % concluding the proof.
  An iterative argument completes the proof.
\end{proof}

\subsection{\texorpdfstring{$\BV$}{BV} bound}
\label{sec:bv}

\begin{proposition} {\bf ($\BV$ estimate in space)}\label{prop:bvroe}
  Let $\brho_o \in (\L\infty \cap \BV) (\Omega;
  \reali^N_+)$. Let~\ref{vs}, \ref{H}, and~\ref{eta} hold. Assume that 
  \begin{align}
   \label{eq:CFLroe-v2}
   \lambda_x \leq \
    & \frac{1}{3(\epsilon_{max} \, L_H + \norma{\vs_1}_{\L\infty})},
    &
      \lambda_y \leq \
    &  \frac{1}{3(\epsilon_{max} \, L_H + \norma{\vs_2}_{\L\infty})}.
  \end{align}
  where $\epsilon_{max}:=\max_{c}\epsilon_c$. Then, for all $c\in\{1,\dots,N\}$, $t>0$, $\rho^c_\Delta$ 
  constructed through~\eqref{eq:scheme1roe} and~\eqref{eq:scheme2roe} satisfies the following
  estimate: for all $n=0, \ldots, N_T$,
  \begin{equation}
    \label{eq:bvspaceroe}
    \sum_{i,j \in \interi} \left(
      \dy \, \modulo{\rh{i+1,j} - \rh{i,j}}
      + \dx \, \modulo{\rh{i,j+1} - \rh{i,j}}\right)
    \leq \mathcal{C}^c_{x}(t^n),
  \end{equation}
  where
   \begin{equation}
    \label{eq:Cx}
    \mathcal{C}^c_{x}(t) = e^{2 \,t \, \mathcal{K}^c_1}  \sum_{i,j \in \interi} \left(
      \dx \, \modulo{\rho^{c,0}_{i,j+1} - \rho^{c,0}_{i,j}}
      +  \dy \, \modulo{\rho^{c,0}_{i+1,j} -\rho^{c,0}_{i,j}}
    \right)
    +  \frac{2 \, \mathcal{K}^c_2}{\mathcal{K}^c_1}\left(e^{2 \, t \, \mathcal{K}^c_1} -1
    \right),
  \end{equation}
  with
  \begin{align}
    \label{eq:K1defroe}
    \mathcal{K}^c_1 = \
    &
    6  \left( \norma{\nabla \boldsymbol{\vs}}_{\L\infty} +  \epsilon_c \, \left(2\,
    \norma{\nabla^2 \eta_c}_{\L\infty}+ L_H\,\norma{\nabla\Tilde{\eta}_c}_{\L\infty}\right) \norma{r_{\Omega,o}}_{\L1}\right),
    \\
    \label{eq:K2defroe}
    \mathcal{K}^c_2 = \
    &  \left(
      4 \, \epsilon_c \left(
        c^c_1 \,\norma{r_{\Omega,o}}_{\L1} + c^c_2 \,\norma{r_{\Omega,o}}_{\L1}^2
      \right)
      + 3 \, \norma{\nabla^2 \boldsymbol{\vs}}_{\L\infty}
      \right) \norma{\rho^c_o}_{\L1(\Omega)},
  \end{align}
  and $c^c_1, \, c^c_2 $ are defined in~\eqref{eq:c12}.
\end{proposition}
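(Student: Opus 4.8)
The plan is to establish a one-step recursive inequality of the form $\tv(\rho^{c,n+1})\le(1+2\,\dt\,\mathcal{K}^c_1)\,\tv(\rho^{c,n})+2\,\dt\,\mathcal{K}^c_2$ for the discrete spatial total variation, and then to close it by a discrete Gronwall argument. Since Algorithm~\ref{alg:2} proceeds by dimensional splitting, I would analyse the two fractional steps \eqref{eq:scheme1roe} and \eqref{eq:scheme2roe} in turn, keeping track simultaneously of the horizontal increments $\rh{i+1,j}-\rh{i,j}$ and the vertical ones $\rh{i,j+1}-\rh{i,j}$: each half-step is a one-dimensional update along its own direction, but the $(x,y)$-dependence of $\boldsymbol{\vs}$ and of the non-local field $J^{c,n}$ couples the two families of increments, and this coupling must be carried along.

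First I would put the $x$-update \eqref{eq:scheme1roe} in conservative upwind form. Combining \eqref{eq:fx1}--\eqref{eq:fx2}, the numerical flux across the interface $\x{i+\sfrac12,j}$ reads
\[
\big(\max\{0,\vs_1\}+\max\{0,J^{c,n}_1\}\big)\,\rh{i,j}+\big(\min\{0,\vs_1\}+\min\{0,J^{c,n}_1\}\big)\,\rh{i+1,j},
\]
all velocities being evaluated at $\x{i+\sfrac12,j}$. Hence $\rho^{c,n+1/2}_{i,j}$ is a combination of $\rh{i-1,j},\rh{i,j},\rh{i+1,j}$ with nonnegative outer weights, and subtracting the updates at $(i+1,j)$ and $(i,j)$ yields a recursion for the increment $\rh{i+1,j}-\rh{i,j}$ whose coefficients remain nonnegative under the CFL condition \eqref{eq:CFLroe-v2}; the appearance of $L_H=\norma{H'}_{\L\infty}$ in this threshold reflects the cell-to-cell variation of the dynamic velocity, whose spatial Lipschitz constant is governed by the smoothed Heaviside factor.

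I would then take absolute values in the increment recursion and sum over $i,j\in\interi$, weighting the $x$-increments by $\dy$ and the $y$-increments by $\dx$ as in \eqref{eq:bvspaceroe}. The coefficients are nonnegative and sum to one, so the homogeneous part reproduces $\tv(\rho^{c,n})$, while the source collects the differences of the interface velocities between consecutive cells. These split into a part proportional to the local increment --- producing, after summation, the factor $2\,\dt\,\mathcal{K}^c_1$ with $\mathcal{K}^c_1$ as in \eqref{eq:K1defroe} --- and a part proportional to $\rh{i,j}$ times a second difference of the velocities, which after summation contributes $2\,\dt\,\mathcal{K}^c_2$ with $\mathcal{K}^c_2$ as in \eqref{eq:K2defroe}; the $\L\infty$- and $\L1$-bounds of Lemmas~\ref{lem:Linfroe} and~\ref{lem:L1roe} are invoked here to replace $\norma{\rho^{c,n}}_{\L\infty}$ and $\norma{r_\Omega^n}_{\L1}$ by $\norma{\rho^c_o}_{\L1}$ and $\norma{r_{\Omega,o}}_{\L1}$.

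The decisive and most delicate point --- and the main obstacle --- is the estimation of the increments of the discrete dynamic velocity $J^{c,n}_1$ that make up the source. Both its first-order increments $J^{c,n}_1(\x{i+\sfrac12,j})-J^{c,n}_1(\x{i-\sfrac12,j})$, feeding $\mathcal{K}^c_1$, and the second-order ones, feeding $\mathcal{K}^c_2$, must be controlled through the quadrature \eqref{eq:conv}, which couples all $N$ classes and the obstacle contribution via the augmented density $r_\Omega$, and through the nonlinear normalisation $\nabla(\eta_c*r_\Omega)/\sqrt{1+\norma{\nabla(\eta_c*r_\Omega)}^2}$ together with the smoothed Heaviside: it is the differentiation of the latter that produces the factors $L_H$ and $\norma{\nabla\tilde\eta_c}_{\L\infty}$. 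I would carry out these bounds by means of the technical lemmas of Appendix~\ref{sec:teclem}, using \eqref{eq:Jx} for the first-order increments and \eqref{eq:c12} for the constants $c^c_1,c^c_2$ entering the second-order ones. Running the identical argument for the $y$-update \eqref{eq:scheme2roe} and composing the two half-steps gives the announced one-step inequality; a discrete Gronwall estimate, bounding $(1+2\,\dt\,\mathcal{K}^c_1)^{n}\le e^{2\,t^n\mathcal{K}^c_1}$ and summing the resulting geometric series, then produces exactly the constant $\mathcal{C}^c_x(t^n)$ of \eqref{eq:Cx} and hence \eqref{eq:bvspaceroe}.
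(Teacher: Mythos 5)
Your proposal is correct and follows essentially the same route as the paper's proof: the paper likewise writes each fractional step in incremental form with nonnegative coefficients in $[0,1/3]$ under the CFL condition~\eqref{eq:CFLroe-v2}, tracks both horizontal and vertical increments through each half-step (its terms $\mathcal{A},\mathcal{D}$ for the convex part and $\mathcal{B},\mathcal{E}$ for the velocity-difference source), estimates the increments of $J^{c,n}_\ell$ via~\eqref{eq:Jx}, \eqref{eq:J1y}, \eqref{eq:Jtripla}, \eqref{eq:Jtriplabis} with the constants~\eqref{eq:c12}, and closes with the same recursive Gronwall argument yielding $e^{2t\mathcal{K}^c_1}$. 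The only cosmetic discrepancy is your one-step inequality with factor $(1+2\,\dt\,\mathcal{K}^c_1)$: composing the two half-steps actually gives $(1+\dt\,\mathcal{K}^c_1)^2$, but this is absorbed identically in the final bound $(1+\dt\,\mathcal{K}^c_1)^{2n}\le e^{2\,t^n\mathcal{K}^c_1}$, so nothing is lost.
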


\begin{remark}
  Observe that the CFL conditions~\eqref{eq:CFLroe-v2} are stricter than~\eqref{eq:CFLroe}.
\end{remark}

\begin{proof}
  We follow the proof in~\cite{GoatinRossi2024} adapting it to the system. First consider for $c\in\{1,...,N\}$ the
  term
  \begin{displaymath}
    \sum_{i,j \in \interi} \dy\, \modulo{\rho^{c,n+1/2}_{i+1,j} - \rho^{c,n+1/2}_{i,j}}.
  \end{displaymath}
  In particular, fixing $i, j \in \interi$ and omitting the
  dependencies on $y_j$ for the sake of simplicity and setting $r_{\Omega,o}=\sum_{k=1}^{N+1}\alpha_k\,\rho^k_{\Omega,o} $,
  by~\eqref{eq:scheme1roe} we get
  \begin{align*}
    \rho_{i+1}^{c,n+1/2} - \rho_{i}^{c,n+1/2}
    = \
    & \rh{i+1} - \rh{i}
      - \lambda_x \left[ V_1 (\x{i+3/2}, \rh{i+1}, \rh{i+2})
      +F^c (\rh{i+1}, \rh{i+2}, J^{c,n}_1 (x_{i+3/2}))
      \right.
    \\
    & \qquad\qquad\qquad\quad
      -V_1 (\x{i+1/2}, \rh{i}, \rh{i+1})
      -F^c (\rh{i}, \rh{i+1}, J^{c,n}_1 (x_{i+1/2}))
    \\
    & \qquad\qquad\qquad\quad
      - V_1 (\x{i+1/2}, \rh{i}, \rh{i+1})
      - F^c (\rh{i}, \rh{i+1}, J^{c,n}_1 (x_{i+1/2}))
    \\
    & \qquad\qquad\qquad\quad
      \left.
      +  V_1 (\x{i-1/2}, \rh{i-1}, \rh{i})
      +F^c (\rh{i-1}, \rh{i}, J^{c,n}_1 (x_{i-1/2}))
      \right]
    \\
    & \pm \lambda_x \left[
      V_1 (\x{i+3/2}, \rh{i}, \rh{i+1})
      +F^c (\rh{i}, \rh{i+1}, J^{c,n}_1 (x_{i+3/2}))
      \right.
    \\
    &  \qquad\left.
      -V_1 (\x{i+1/2}, \rh{i-1}, \rh{i})
      -F^c (\rh{i-1}, \rh{i}, J^{c,n}_1 (x_{i+1/2}))\right]
    \\
    = \
    & \mathcal{A}_{i,j}^{c,n} - \lambda_x \, \mathcal{B}_{i,j}^{c,n},
  \end{align*}
  where we set
\begin{align*}
    \mathcal{A}_{i,j}^{c,n} = \
    & \rh{i+1} - \rh{i} - \lambda_x \left[
      V_1 (\x{i+3/2}, \rh{i+1}, \rh{i+2})
      +F^c (\rh{i+1}, \rh{i+2}, J^{c,n}_1 (x_{i+3/2})) \right.
    \\
    & \qquad\qquad\qquad\quad
      -V_1 (\x{i+1/2}, \rh{i}, \rh{i+1})
      -F^c (\rh{i}, \rh{i+1}, J^{c,n}_1 (x_{i+1/2}))
    \\
    & \qquad\qquad\qquad\quad
      +V_1 (\x{i+1/2}, \rh{i-1}, \rh{i})
      +F^c (\rh{i-1}, \rh{i}, J^{c,n}_1 (x_{i+1/2}))
    \\
    & \qquad\qquad\qquad\quad
      \left.
      -V_1 (\x{i+3/2}, \rh{i}, \rh{i+1})
      -F^c (\rh{i}, \rh{i+1}, J^{c,n}_1 (x_{i+3/2}))\right],
    \\
    \mathcal{B}_{i,j}^{c,n} = \
    &  V_1 (\x{i+3/2}, \rh{i}, \rh{i+1})
      +F^c (\rh{i}, \rh{i+1}, J^{c,n}_1 (x_{i+3/2}))
    \\
    &  -V_1 (\x{i+1/2}, \rh{i-1}, \rh{i})
      -F^c (\rh{i-1}, \rh{i}, J^{c,n}_1 (x_{i+1/2}))
    \\
    &+  V_1 (\x{i-1/2}, \rh{i-1}, \rh{i})
      +F^c (\rh{i-1}, \rh{i}, J^{c,n}_1 (x_{i-1/2}))
    \\
    &    -V_1 (\x{i+1/2}, \rh{i}, \rh{i+1})
      -F^c (\rh{i}, \rh{i+1}, J^{c,n}_1 (x_{i+1/2})).
  \end{align*}
  For the sake of shortness, introduce the following notation
  \begin{equation}
    \label{eq:10}
    H^{c,n}_{h,\ell}(u,w) = V_1(x_{h,\ell},u,w)+F^c(u,w,J^{c,n}_1(x_{h,\ell})),
  \end{equation}
  so that, dropping the $j$ dependencies, $\mathcal{A}^{k,n}_{i,j}$ reads
  \begin{align*}
    \mathcal{A}_{i,j}^{c,n}= & \ \rh{i+1} - \rh{i} - \lambda_x
     \left[
      H^{c,n}_{i+3/2} (\rh{i+1},\rh{i+2})
      -H^{c,n}_{i+1/2} (\rh{i},\rh{i+1})\right.
    \\
    & \left.
      +H^{c,n}_{i+1/2} (\rh{i-1},\rh{i})
      -H^{c,n}_{i+3/2} (\rh{i},\rh{i+1})\right]
    \\
   = \
    & \rh{i+1} - \rh{i}
      - \lambda_x \, \frac{
      H^{c,n}_{i+3/2} ( \rh{i+1},\rh{i+2})
      - H^{c,n}_{i+3/2} ( \rh{i+1},\rh{i+1}) }{\rh{i+2} - \rh{i+1}}
      \left(\rh{i+2}-\rh{i+1}\right)
    \\
    &
      -\lambda_x \, \frac{H^{c,n}_{i+3/2}( \rh{i+1},\rh{i+1})
      - H^{c,n}_{i+3/2} (\rh{i},\rh{i+1})}{\rh{i+1}-\rh{i}}
      \left(\rh{i+1}-\rh{i}\right)
    \\
    & + \lambda_x \, \frac{ H^{c,n}_{i+1/2}( \rh{i}, \rh{i+1})
      -  H^{c,n}_{i+1/2} (\rh{i}, \rh{i})}{\rh{i+1} - \rh{i}}
      \left(\rh{i+1}-\rh{i}\right)
    \\
    & + \lambda_x \, \frac{ H^{c,n}_{i+1/2}(\rh{i}, \rh{i})
      -  H^{c,n}_{i+1/2}( \rh{i-1}, \rh{i}) }{\rh{i} - \rh{i-1}}
      \left(\rh{i} - \rh{i-1} \right)
    \\
    = \
    & \delta_{i+1}^{c,n}  \left(\rh{i+2}-\rh{i+1}\right)
      + \theta_i^{c,n}  \left(\rh{i} - \rh{i-1} \right)
      + (1 - \delta_i^{c,n} - \theta_{i+1}^{c,n}) \left(\rh{i+1}-\rh{i}\right),
  \end{align*}
  where
  \begin{align}
    \label{eq:deltainroe}
    \delta_i^{c,n} = \ &
                     \begin{cases}
                       - \lambda_x \, \dfrac{H^{c,n}_{i+1/2} ( \rh{i},
                         \rh{i+1})- H^{c,n}_{i+1/2} (\rh{i},
                         \rh{i})}{\rh{i+1}-\rh{i}} & \mbox{if } \rh{i}
                       \neq \rh{i+1},
                       \\
                       0 & \mbox{if } \rh{i} = \rh{i+1} ,
                     \end{cases}
    \\
    \label{eq:thetainroe}
    \theta_i^{c,n} = \ &
                     \begin{cases}
                       \lambda_x \, \dfrac{H^{c,n}_{i+1/2} (\rh{i},
                         \rh{i}) - H^{c,n}_{i+1/2}(\rh{i-1},
                         \rh{i})}{\rh{i}-\rh{i-1}} & \mbox{if } \rh{i}
                       \neq \rh{i-1},
                       \\
                       0 & \mbox{if } \rh{i} = \rh{i-1}.
                     \end{cases}
  \end{align}
  Exploiting~\eqref{eq:10}, observe that, whenever
  $\rh{i}\neq \rh{i+1}$,
  \begin{align*}
    \delta_i^{c,n} = \
    & -\frac{\lambda_x}{\rh{i+1}-\rh{i}}
      \left[
      V_1(x_{i+1/2},\rh{i}, \rh{i+1})
      + F^c (\rh{i}, \rh{i+1}, J^{c,n}_1 (x_{i+1/2}))\right.
    \\
    & \qquad\qquad\qquad\left.
      -V_1(x_{i+1/2},\rh{i}, \rh{i})
      - F^c (\rh{i}, \rh{i}, J^{c,n}_1 (x_{i+1/2}))
      \right]
    \\
    = \
    & \! -\frac{\lambda_x}{\rh{i+1}-\rh{i}}\!\!
      \left[ \min\left\{0, \vs_1(x_{i+1/2})\right\} (\rh{i+1} - \rh{i})
      +  \min\left\{0, J^{c,n}_1(x_{i+1/2})\right\} \!\!\left(\rh{i+1} -\rh{i}\right)\!
      \right]
    \\
    = \
    & -\lambda_x \left(  \min\left\{0, \vs_1(x_{i+1/2})\right\}
      + \min\left\{0, J^{c,n}_1(x_{i+1/2})\right\} \right).
  \end{align*}
  \\
  By~\eqref{eq:CFLroe-v2} we get
  \begin{displaymath}
    \delta_i^{c,n} \in \left[0, \frac13\right].
  \end{displaymath}
  In a similar way one can prove that $\theta_i^{c,n} \in [0, 1/3]$. Thus,
  \begin{equation}
    \label{eq:AijOKroe}
    \sum_{i,j\in \interi} \modulo{\mathcal{A}_{i,j}^{c,n}}
    \leq
    \sum_{i,j \in \interi} \modulo{\rh{i+1,j} - \rh{i,j}}.
  \end{equation}

We pass now to $\mathcal{B}_{i,j}^{c,n}$. Consider separately the terms
  involving $V_1$ and those involving $F^c$. Observe that the maps
  \begin{align*}
    x \mapsto &\min\left\{0, \vs_1 (x)\right\},
    &
    x \mapsto &\min\left\{0, J_1^{c,n} (x)\right\}
  \end{align*}
  are Lipschitz continuous, with constant respectively
  $\norma{\partial_x \vs_1}_{\L\infty}$ and
  \\
  $\epsilon_c\, \left(2 \, \norma{\nabla^2\eta_c}_{\L\infty}+ L_H \norma{\nabla \Tilde{\eta}_c}_{\L\infty}\right)
  \norma{r_{\Omega,o}}_{\L1}$.  Exploiting~\eqref{eq:fx1} we get:
  \begin{align}
    \nonumber
    & V_1 (\x{i+3/2}, \rh{i}, \rh{i+1})
      -V_1 (\x{i+1/2}, \rh{i-1}, \rh{i})
      +  V_1 (\x{i-1/2}, \rh{i-1}, \rh{i})
      -V_1 (\x{i+1/2}, \rh{i}, \rh{i+1})
    \\  \nonumber
    = \
    & \vs_1 (x_{i+3/2}) \rh{i} + \min\left\{0, \vs_1 (x_{i+3/2})\right\} (\rh{i+1} - \rh{i})
    \\  \nonumber
    &  - \vs_1 (x_{i+1/2}) \rh{i} - \min\left\{0, \vs_1 (x_{i+1/2})\right\} (\rh{i+1} - \rh{i})
    \\  \nonumber
    &   + \vs_1 (x_{i-1/2}) \rh{i-1} + \min\left\{0, \vs_1 (x_{i-1/2})\right\} (\rh{i} - \rh{i-1})
    \\ \nonumber
    & - \vs_1 (x_{i+1/2}) \rh{i-1} - \min\left\{0, \vs_1 (x_{i+1/2})\right\} (\rh{i} - \rh{i-1})
    \\ \nonumber
    & \pm \left(\vs_1 (x_{i-1/2}) - \vs_1 (x_{i+1/2})\right) \rh{i}
    \\ \nonumber
    = \
    &  \left(\vs_1 (x_{i+3/2})  -2 \, \vs_1 (x_{i+1/2}) + \vs_1 (x_{i-1/2})\right) \rh{i}
    \\ \nonumber
    & +  \left(\vs_1 (x_{i-1/2}) - \vs_1 (x_{i+1/2})\right)(\rh{i-1} - \rh{i})
    \\ \nonumber
    &+ \left(
      \min\left\{0, \vs_1 (x_{i+3/2})\right\} - \min\left\{0, \vs_1 (x_{i+1/2})\right\}
      \right)(\rh{i+1} - \rh{i})
    \\ \nonumber
    & +\left(
      \min\left\{0, \vs_1 (x_{i-1/2})\right\} -   \min\left\{0, \vs_1 (x_{i+1/2})\right\}
      \right)(\rh{i} - \rh{i-1})
    \\ \label{eq:B-V}
    \leq \
    & 2 \, (\dx)^2 \, \norma{\partial_{xx}^2 \vs_1}_{\L\infty} \modulo{\rh{i}}
      + \dx \norma{\partial_x \vs_1}_{\L\infty} \left(
      \modulo{\rh{i+1} - \rh{i}} + 2 \, \modulo{\rh{i} - \rh{i-1}}
      \right),
  \end{align}
  since
  \begin{align*}
    \vs_1 (x_{i+3/2})  -2 \, \vs_1 (x_{i+1/2}) + \vs_1 (x_{i-1/2}) = \
    &
      \dx \, \partial_x \vs_1 (\xi_{i+1}) - \dx \, \partial_x \vs_1 (\xi_{i})
    \\
    = \
    &
      \dx \, (\xi_{i+1} - \xi_i) \, \partial_{xx}^2\vs_1 (\zeta_{i+1/2}),
  \end{align*}
  with $\xi_i \in\, ]x_{i-1/2}, x_{i+1/2}[ $ and
  $\zeta_{i+1/2} \in \,]\xi_i, \xi_{i+1}[$.
  Similarly, exploiting~\eqref{eq:fx2} we obtain
  \begin{align}
    \nonumber
    & F^c (\rh{i}, \rh{i+1}, J^{c,n}_1 (x_{i+3/2}))
      -F^c (\rh{i-1}, \rh{i}, J^{c,n}_1 (x_{i+1/2}))
    \\
    \nonumber
    & +F^c (\rh{i-1}, \rh{i}, J^{c,n}_1 (x_{i-1/2}))
      -F^c (\rh{i}, \rh{i+1}, J^{c,n}_1 (x_{i+1/2}))
    \\ \nonumber
    = \
    & J^{c,n}_1 (x_{i+3/2}) \rh{i} + \min\left\{0, J^{c,n}_1 (x_{i+3/2})\right\} (\rh{i+1} - \rh{i})
    \\  \nonumber
    &  - J^{c,n}_1 (x_{i+1/2}) \rh{i} - \min\left\{0, J^{c,n}_1 (x_{i+1/2})\right\} (\rh{i+1} - \rh{i})
    \\  \nonumber
    &   + J^{c,n}_1 (x_{i-1/2}) \rh{i-1} + \min\left\{0, J^{c,n}_1 (x_{i-1/2})\right\} (\rh{i} - \rh{i-1})
    \\ \nonumber
    & - J^{c,n}_1 (x_{i+1/2}) \rh{i-1} - \min\left\{0, J^{c,n}_1 (x_{i+1/2})\right\} (\rh{i} - \rh{i-1})
    \\ \nonumber
    & \pm \left(J^{c,n}_1 (x_{i-1/2}) - J^{c,n}_1 (x_{i+1/2})\right) \rh{i}
    \\ \nonumber
    = \
    &  \left(J^{c,n}_1 (x_{i+3/2})  -2 \, J^{c,n}_1 (x_{i+1/2}) + J^{c,n}_1 (x_{i-1/2})\right) \rh{i}
    \\ \nonumber
    & +  \left(J^{c,n}_1 (x_{i-1/2}) - J^{c,n}_1 (x_{i+1/2})\right)(\rh{i-1} - \rh{i})
    \\ \nonumber
    &+ \left(
      \min\left\{0, J^{c,n}_1 (x_{i+3/2})\right\} - \min\left\{0, J^{c,n}_1 (x_{i+1/2})\right\}
      \right)(\rh{i+1} - \rh{i})
    \\ \nonumber
    & +\left(
      \min\left\{0, J^{c,n}_1 (x_{i-1/2})\right\} -   \min\left\{0, J^{c,n}_1 (x_{i+1/2})\right\}
      \right)(\rh{i} - \rh{i-1})
    \\ \label{eq:B-F}
    \leq \
    & 2 \, \epsilon_c \, \dx \, C_c \, \modulo{\rh{i}}
      \\
      \nonumber
     & +  \epsilon_c \,  \dx \left(2\,\norma{\nabla^2 \eta_c}_{\L\infty} + L_H\norma{\nabla\Tilde{\eta}_c}_{L\infty}\right) \norma{r_{\Omega,o}}_{\L1} \left(
      \modulo{\rh{i+1} - \rh{i}} + 2 \, \modulo{\rh{i} - \rh{i-1}}
      \right),
  \end{align}
  \\
  where we used \eqref{eq:Jx} and~\eqref{eq:Jtripla}, with the
  notation~\eqref{eq:c12}.  Collecting together~\eqref{eq:B-V}
  and~\eqref{eq:B-F} we therefore obtain
  \begin{align*}
    \modulo{\mathcal{B}_{i,j}^{c,n}}\leq \
    &  2 \, (\dx)^2 \left(
      \norma{\partial_{xx}^2 \vs_1}_{\L\infty}
      + \epsilon_c\, C_c
      \right)\modulo{\rh{i}} + \dx  \left(
      \norma{\partial_x \vs_1}_{\L\infty}\right.
    \\
    &   \left.
      +  \epsilon_c  \left(2\,\norma{\nabla^2 \eta_c}_{\L\infty} + L_H\norma{\nabla\Tilde{\eta}_c}_{\L\infty}\right) \norma{r_{\Omega,o}}_{\L1}
      \right)
      \left(
      \modulo{\rh{i+1} - \rh{i}} + 2 \, \modulo{\rh{i} - \rh{i-1}}
      \right),
  \end{align*}
  so that
    \begin{align}
    \sum_{i,j \in \interi} & \lambda_x   \, \modulo{\mathcal{B}_{i,j}^{c,n}}
    \leq \
       2 \, \dt  \left(
      \norma{\partial_{xx}^2 \vs_1}_{\L\infty}
      + \epsilon_c\, C_c
      \right) \dx \sum_{i,j \in \interi} \modulo{\rh{i}}
      \label{eq:BijOKroe}
      \\
      \nonumber
    & + 3 \, \dt  \left(
      \norma{\partial_x \vs_1}_{\L\infty}
      +  \epsilon_c  \left(2\,\norma{\nabla^2 \eta_c}_{\L\infty} + L_H\norma{\nabla\Tilde{\eta}_c}_{\L\infty}\right) \norma{r_{\Omega,o}}_{\L1}
      \right) \sum_{i,j \in \interi} \modulo{\rh{i+1} -\rh{i}}.
    %\label{eq:BijOKroe}
  \end{align}
  Therefore, by~\eqref{eq:AijOKroe} and~\eqref{eq:BijOKroe}, using
  also Lemma~\ref{lem:L1roe}
  \begin{align}
    \nonumber
    & \sum_{i,j \in \interi} \dy \, \modulo{\rho_{i+1,j}^{c,n+1/2} - \rho_{i,j}^{c,n+1/2}}
    \\ \nonumber
    \leq \
    & \sum_{i,j \in \interi} \dy \left(\modulo{\mathcal{A}_{i,j}^{c,n}}
      + \lambda_x\, \modulo{\mathcal{B}_{i,j}^{c,n}}\right)
    \\
    \label{eq:BVdyroe}
    \leq \
    & \left[ 1+ 3 \, \dt  \left(
      \norma{\partial_x \vs_1}_{\L\infty}
      +  \epsilon_c  \left(2\,\norma{\nabla^2 \eta_c}_{\L\infty} + L_H\norma{\nabla\Tilde{\eta}_c}_{\L\infty}\right) \norma{r_{\Omega,o}}_{\L1}
      \right) \right]
    \\ \nonumber
    &
      \quad\quad \cdot \sum_{i,j \in \interi} \dy \, \modulo{\rh{i+1} -\rh{i}} + 2 \, \dt  \left[
      \norma{\partial_{xx}^2 \vs_1}_{\L\infty}
      + \epsilon_c \, C_c
      \right] \norma{\rho^c_o}_{\L1}.
  \end{align}
  Now pass to the term
  \begin{displaymath}
    \sum_{i,j \in \interi} \dx \, \modulo{\rho_{i,j+1}^{c,n+1/2} - \rho_{i,j}^{c,n+1/2}}.
  \end{displaymath}
  Fix $i,j\in\interi$ and exploit~\eqref{eq:scheme1roe} again
  to get
  \begin{align*}
    &\rho^{c,n+1/2}_{i,j+1} - \rho^{c,n+1/2}_{i,j}
    \\
    = \
    & \rh{i,j+1} - \rh{i,j}
      - \lambda_x \left[
      V_1 (\x{i+1/2,j+1}, \rh{i,j+1}, \rh{i+1,j+1})
      +F^c (\rh{i,j+1}, \rh{i+1,j+1}, J^{c,n}_1 (x_{i+1/2,j+1}))
      \right.
    \\
    & \qquad\qquad\qquad\quad
      -V_1 (\x{i-1/2,j+1}, \rh{i-1,j+1}, \rh{i,j+1})
      -F^c (\rh{i-1,j+1}, \rh{i,j+1}, J^{c,n}_1 (\x{i-1/2,j+1}))
    \\
    & \qquad\qquad\qquad\quad
      - V_1 (\x{i+1/2,j}, \rh{i,j}, \rh{i+1,j})
      - F^c (\rh{i,j}, \rh{i+1,j}, J^{c,n}_1 (x_{i+1/2,j}))
    \\
    & \qquad\qquad\qquad\quad
      \left.
      +  V_1 (\x{i-1/2,j}, \rh{i-1,j}, \rh{i,j})
      +F^c (\rh{i-1,j}, \rh{i,j}, J^{c,n}_1 (x_{i-1/2,j}))
      \right]
    \\
    & \pm \lambda_x \left[
      V_1 (\x{i+1/2,j+1}, \rh{i,j}, \rh{i+1,j})
      +F^c (\rh{i,j}, \rh{i+1,j}, J^{c,n}_1 (x_{i+1/2,j+1}))
      \right.
    \\
    &  \qquad\left.
      -V_1 (\x{i-1/2,j+1}, \rh{i-1,j}, \rh{i,j})
      -F^c (\rh{i-1,j}, \rh{i,j}, J^{c,n}_1 (x_{i-1/2,j+1}))\right]
    \\
    = \
    & \mathcal{D}_{i,j}^{c,n} + \lambda_x \, \mathcal{E}_{i,j}^{c,n},
  \end{align*}
  where we set
\begin{align*}
     \mathcal{D}_{i,j}^{c,n}
     = \ & \rh{i,j+1} - \rh{i,j}\\ &-\lambda_x \left[
      V_1 (\x{i+1/2,j+1}, \rh{i,j+1}, \rh{i+1,j+1})
      + F^c (\rh{i,j+1}, \rh{i+1,j+1}, J_1^{c,n} (\x{i+1/2,j+1}))
      \right.
    \\ &  
         - V_1 (\x{i+1/2,j+1}, \rh{i,j}, \rh{i+1,j})
         - F^c ( \rh{i,j}, \rh{i+1,j}, J_1^{c,n} (\x{i+1/2,j+1}))
    \\
    &  
      +  V_1 (\x{i-1/2,j+1}, \rh{i-1,j}, \rh{i,j})
      + F^c( \rh{i-1,j}, \rh{i,j}, J_1^{c,n} (\x{i-1/2,j+1}))
    \\
    & \left. 
      - V_1 (\x{i-1/2,j+1}, \rh{i-1,j+1}, \rh{i,j+1})
      - F^c(\rh{i-1,j+1}, \rh{i,j+1}, J_1^{c,n}(\x{i-1/2,j+1}))
      \right]
  \end{align*}
  and
  \begin{align*}
    \mathcal{E}_{i,j}^{c,n} = \
    & V_1 (\x{i+1/2,j}, \rh{i,j}, \rh{i+1,j})
      + F^c (\rh{i,j}, \rh{i+1,j}, J_1^{c,n} (\x{i+1/2,j}))
    \\
    & - V_1 (\x{i+1/2,j+1}, \rh{i,j}, \rh{i+1,j})
      - F^c (\rh{i,j}, \rh{i+1,j}, J_1^{c,n} (\x{i+1/2,j+1}))
    \\
    &
      + V_1 (\x{i-1/2,j+1}, \rh{i-1,j}, \rh{i,j})
      + F^c (\rh{i-1,j}, \rh{i,j}, J_1^{c,n} (\x{i-1/2,j+1}))
    \\
    & - V_1 (\x{i-1/2,j}, \rh{i-1,j}, \rh{i,j})
      - F^c (\rh{i-1,j}, \rh{i,j}, J_1^{c,n} (\x{i-1/2,j})).
  \end{align*}
  Similarly as before, rearrange $\mathcal{D}_{i,j}^{k,n}$, exploiting the
  notation~\eqref{eq:10}:
  \begin{align*}
    \mathcal{D}_{i,j}^{c,n}
    = \
    & \rh{i,j+1} - \rh{i,j}
      - \lambda_x\left[
      H^{c,n}_{i+1/2,j+1} (\rh{i,j+1}, \rh{i+1,j+1})
      - H^{c,n}_{i+1/2,j+1} (\rh{i,j}, \rh{i+1,j})
      \right.
    \\
    & \left. \qquad\qquad\qquad\qquad + H^{c,n}_{i-1/2,j+1} (\rh{i-1,j}, \rh{i,j})
      - H^{c,n}_{i-1/2,j+1} (\rh{i-1,j+1}, \rh{i,j+1})
      \right]
    \\
    & \pm \, \lambda_x H^{c,n}_{i+1/2, j+1} (\rh{i,j}, \rh{i+1,j+1})
      \pm \, \lambda_x H^{c,n}_{i-1/2,j+1} (\rh{i-1,j}, \rh{i,j+1})
    \\
    = \
    & \rh{i,j+1} - \rh{i,j}
    \\
    & - \lambda_x \frac{H^{c,n}_{i+1/2,j+1} (\rh{i,j+1}, \rh{i+1,j+1})
      - H^{c,n}_{i+1/2,j+1} (\rh{i,j}, \rh{i+1,j+1}) }
      {\rh{i, j+1}-\rh{i,j}}(\rh{i, j+1}-\rh{i,j})
    \\
    & - \lambda_x \frac{H^{c,n}_{i+1/2,j+1} (\rh{i,j}, \rh{i+1,j+1})
      - H^{c,n}_{i+1/2,j+1} (\rh{i,j}, \rh{i+1,j}) }
      {\rh{i+1, j+1}-\rh{i+1,j}}(\rh{i+1, j+1}-\rh{i+1,j})
    \\
    & + \lambda_x \frac{H^{c,n}_{i-1/2,j+1} (\rh{i-1,j}, \rh{i,j+1})
      - H^{c,n}_{i-1/2,j+1} (\rh{i-1,j}, \rh{i,j}) }
      {\rh{i, j+1}-\rh{i,j}}(\rh{i, j+1}-\rh{i,j})
    \\
    & + \lambda_x \frac{H^{c,n}_{i-1/2,j+1} ( \rh{i-1,j+1}, \rh{i,j+1})
      - H^{c,n}_{i-1/2,j+1} ( \rh{i-1,j}, \rh{i,j+1}) }
      {\rh{i-1, j+1}-\rh{i-1,j}}(\rh{i-1, j+1}-\rh{i-1,j})
    \\
    = \
    & (1 - \kappa_{i,j}^{c,n} - \nu_{i,j}^{c,n}) (\rh{i, j+1}-\rh{i,j})
      + \nu_{i+1,j}^{c,n} (\rh{i+1, j+1}-\rh{i+1,j})
      + \kappa_{i-1,j}^{c,n} (\rh{i-1, j+1}-\rh{i-1,j}),
  \end{align*}
  where
  \begin{align*}
    \kappa_{i,j}^{c,n} = \ &
                         \begin{cases}
                           \lambda_x \, \dfrac{H^{c,n}_{i+1/2,j+1}
                             (\rh{i,j+1}, \rh{i+1,j+1}) -
                             H^{c,n}_{i+1/2,j+1}(\rh{i,j}, \rh{i+1,j+1}) }
                           {\rh{i, j+1}-\rh{i,j}} & \mbox{ if } \rh{i,
                             j+1} \neq \rh{i,j},
                           \\
                           0 & \mbox{ if } \rh{i, j+1} = \rh{i,j},
                         \end{cases}
    \\
    \nu_{i,j}^{c,n} = \ &
                      \begin{cases}
                        - \lambda_x \, \dfrac{H^{c,n}_{i-1/2,j+1}
                          (\rh{i-1,j}, \rh{i,j+1}) - H^{c,n}_{i-1/2,j+1}
                          (\rh{i-1,j}, \rh{i,j}) } {\rh{i,
                            j+1}-\rh{i,j}} & \mbox{ if } \rh{i, j+1}
                        \neq \rh{i,j},
                        \\
                        0 & \mbox{ if } \rh{i, j+1} = \rh{i,j}.
                      \end{cases}
  \end{align*}
  As for $\delta_i^{c,n}$~\eqref{eq:deltainroe} and
  $\theta_i^{c,n}$~\eqref{eq:thetainroe}, it is immediate to prove that
  $\kappa_{i,j}^{c,n}, \, \nu_{i,j}^{c,n} \in \left[0, \dfrac13\right]$ for
  all $i, \, j \in \interi$. Hence,

  \begin{equation}
    \label{eq:DijOKroe}
    \sum_{i,j \in \interi} \modulo{\mathcal{D}_{i,j}^{c,n}} \leq
    \sum_{i,j \in \interi} \modulo{\rh{i,j+1} - \rh{i,j}}.
  \end{equation}

  Pass now to $\mathcal{E}_{i,j}^{c,n}$: we can proceed analogously to
  $\mathcal{B}_{i,j}^{c,n}$, treating separately the terms involving $V_1$
  and those involving $F^c$. First, by~\eqref{eq:fx1},

  \begin{align}
    \nonumber
    & V_1 (\x{i+1/2,j}, \rh{i,j}, \rh{i+1,j})
      - V_1 (\x{i+1/2,j+1}, \rh{i,j}, \rh{i+1,j})
     \\ \nonumber
    & + V_1 (\x{i-1/2,j+1}, \rh{i-1,j}, \rh{i,j})
      - V_1 (\x{i-1/2,j}, \rh{i-1,j}, \rh{i,j})
    \\ \nonumber
    = \
    &
      \vs_1 (x_{i+1/2,j}) \rh{i,j} + \min\left\{0, \vs_1 (x_{i+1/2,j})\right\} (\rh{i+1,j} - \rh{i,j})
    \\ \nonumber
    &- \vs_1 (x_{i+1/2,j+1}) \rh{i,j} - \min\left\{0, \vs_1 (x_{i+1/2,j+1})\right\} (\rh{i+1,j} - \rh{i,j})
    \\ \nonumber
    & + \vs_1 (x_{i-1/2,j+1}) \rh{i-1,j} + \min\left\{0, \vs_1 (x_{i-1/2,j+1})\right\} (\rh{i,j} - \rh{i-1,j})
    \\ \nonumber
    & -\vs_1 (x_{i-1/2,j}) \rh{i-1,j} - \min\left\{0, \vs_1 (x_{i-1/2,j})\right\} (\rh{i,j} - \rh{i-1,j})
    \\ \nonumber
    & \pm\left( \vs_1 (x_{i-1/2,j+1}) - \vs_1 (x_{i-1/2,j})\right)\rh{i,j}
    \\ \nonumber
    = \
    & \left(  \vs_1 (x_{i+1/2,j})  -  \vs_1 (x_{i+1/2,j+1})  -  \vs_1 (x_{i-1/2,j}) + \vs_1 (x_{i-1/2,j+1})
      \right) \rh{i,j}
    \\ \nonumber
    & + \left( \vs_1 (x_{i-1/2,j+1}) - \vs_1 (x_{i-1/2,j})\right) (\rh{i-1,j} - \rh{i,j})
    \\ \nonumber
    & + \left(
       \min\left\{0, \vs_1 (x_{i+1/2,j})\right\}-\min\left\{0, \vs_1 (x_{i+1/2,j+1})\right\}
      \right) (\rh{i+1,j} - \rh{i,j})
    \\ \nonumber
    &+  \left(
      \min\left\{0, \vs_1 (x_{i-1/2,j+1})\right\} -  \min\left\{0, \vs_1 (x_{i-1/2,j})\right\}
      \right)(\rh{i,j} - \rh{i-1,j})
    \\ \label{eq:12}
    \leq \
    & \dx \, \dy \, \norma{\partial_{xy}^2 \vs_1}_{\L\infty} \modulo{\rh{i,j}}
      + \dy \, \norma{\partial_y \vs_1}_{\L\infty}
      \left( \modulo{\rh{i+1,j} - \rh{i,j}}+  2 \, \modulo{\rh{i,j} - \rh{i-1,j}}
      \right),
  \end{align}
  since

  \begin{align*}
    & \vs_1 (x_{i+1/2,j})  -  \vs_1 (x_{i+1/2,j+1})  -  \vs_1 (x_{i-1/2,j}) + \vs_1 (x_{i-1/2,j+1})
     \\
    = \
    & \dx \, \partial_x \vs_1 (\xi_i,y_j) - \dx \, \partial_x \vs_1 (\xi_i, y_{j+1})
    \\
    = \
    & - \dx \, \dy \, \partial^2_{xy} \vs_1 (\xi_i, \zeta_{j+1/2}),
  \end{align*}
  with $\xi_i \in \ ]x_{i-1/2}, x_{i+1/2}[$ and $\zeta_{j+1/2} \in \ ] y_j, y_{j+1}[$.
  In a similar way, by~\eqref{eq:fx2},
  \begin{align}
    \nonumber
    & F^c (\rh{i,j}, \rh{i+1,j}, J_1^{c,n} (\x{i+1/2,j}))
      - F^c (\rh{i,j}, \rh{i+1,j}, J_1^{c,n} (\x{i+1/2,j+1}))
    \\ \nonumber
    & + F^c (\rh{i-1,j}, \rh{i,j}, J_1^{c,n} (\x{i-1/2,j+1}))
      - F^c (\rh{i-1,j}, \rh{i,j}, J_1^{c,n} (\x{i-1/2,j}))
    \\ \nonumber
    =\
    & J_1^{c,n} (\x{i+1/2,j}) \, \rh{i,j}
      + \min\left\{0, J_1^{c,n} (\x{i+1/2,j})\right\} \left(\rh{i+1,j} - \rh{i,j}\right)
    \\ \nonumber
    & - J_1^{c,n} (\x{i+1/2,j+1}) \, \rh{i,j}
      - \min\left\{0, J_1^{c,n} (\x{i+1/2,j+1})\right\} \left(\rh{i+1,j} - \rh{i,j}\right)
    \\ \nonumber
    &+  J_1^{c,n} (\x{i-1/2,j+1}) \, \rh{i-1,j}
      + \min\left\{0, J_1^{c,n} (\x{i-1/2,j+1})\right\} \left(\rh{i,j} - \rh{i-1,j}\right)
    \\ \nonumber
    & - J_1^{c,n} (\x{i-1/2,j}) \, \rh{i-1,j}
      - \min\left\{0, J_1^{c,n} (\x{i-1/2,j})\right\} \left(\rh{i,j} - \rh{i-1,j}\right)
    \\ \nonumber
    & \pm \left( J_1^{c,n} (\x{i-1/2,j+1})  - J_1^{c,n} (\x{i-1/2,j}) \right) \rh{i,j}
    \\ \nonumber
    = \
    & \left(
      J_1^{c,n} (\x{i+1/2,j})   - J_1^{c,n} (\x{i+1/2,j+1})  - J_1^{c,n} (\x{i-1/2,j}) + J_1^{c,n} (\x{i-1/2,j+1})
      \right)\rh{i,j}
    \\ \nonumber
    & + \left( J_1^{c,n} (\x{i-1/2,j+1})  - J_1^{c,n} (\x{i-1/2,j}) \right) \left(\rh{i-1,j} - \rh{i,j}\right)
    \\ \nonumber
    & + \left(
       \min\left\{0, J_1^{c,n} (\x{i+1/2,j})\right\}- \min\left\{0, J_1^{c,n} (\x{i+1/2,j+1})\right\}
      \right)\left(\rh{i+1,j} - \rh{i,j}\right)
    \\ \nonumber
    & + \left(
      \min\left\{0, J_1^{c,n} (\x{i-1/2,j+1})\right\} - \min\left\{0, J_1^{c,n} (\x{i-1/2,j})\right\}
      \right)\left(\rh{i,j} - \rh{i-1,j}\right)
    \\ \label{eq:13}
    \leq \
    & 
    2 \, \epsilon_c \, \dx \, \dy \, C_c \modulo{\rh{i,j}}
    \\
    \nonumber
      &+ \epsilon_c \dy \, \left(2\,\norma{\nabla^2 \eta_c}_{\L\infty} \!+ L_H\norma{\nabla\Tilde{\eta}_c}_{\L\infty}\right) \norma{r_{\Omega,o}}_{\L1}
      \!\!
      \left(
      \modulo{ \rh{i+1,j} - \rh{i,j}}
      + 2 \modulo{\rh{i,j} + \rh{i-1,j}}
      \right),
  \end{align}
  where we used~\eqref{eq:J1y}
  and~\eqref{eq:Jtriplabis}, with the notation~\eqref{eq:c12}.
  Therefore, collecting together~\eqref{eq:12} and~\eqref{eq:13}, we
  get
  \begin{align*}
    \modulo{\mathcal{E}_{i,j}^{c,n}} \leq \
    & \dx \, \dy \left( \norma{\partial_{xy}^2 \vs_1}_{\L\infty}
      +  2 \, \epsilon_c \,C_c \right) \modulo{\rh{i,j}}
    \\
    & +\dy \left(
      \norma{\partial_y \vs_1}_{\L\infty}
      +  \epsilon_c \left( 2\,\norma{\nabla^2 \eta_c}_{\L\infty} + L_H\norma{\nabla\Tilde{\eta}_c}_{\L\infty}\right)\norma{r_{\Omega,o}}_{\L1}
      \right)
      \\
      & \quad \quad\left(
       \modulo{ \rh{i+1,j} - \rh{i,j}}
      + 2 \modulo{\rh{i,j} + \rh{i-1,j}}
      \right),
  \end{align*}
  so that
  \begin{align}
    \nonumber
    \sum_{i,j \in \interi} \lambda_x  \modulo{\mathcal{E}_{i,j}^{c,n}}
    \leq \
    &  3 \lambda_x \dy \left(
      \norma{\partial_y \vs_1}_{\L\infty} \!\!\!
      + \! \epsilon_c\left(\! 2 \norma{\nabla^2 \eta_c}_{\L\infty}\!\!\!+\!L_H\norma{\nabla\Tilde{\eta}_c}_{\L\infty}\right)\norma{r_{\Omega,o}}_{\L1}
      \!\right)
    \\
    \label{eq:EijOKroe}
    & \cdot \sum_{i,j \in \interi} \modulo{\rh{i+1,j} - \rh{i,j}} \,
    + \dt   \left( \norma{\partial_{xy}^2 \vs_1}_{\L\infty}
      +  2 \, \epsilon_c \,C_c \right)  \, \dy  \sum_{i,j \in \interi} \rh{i,j}.
  \end{align}
  Hence, by~\eqref{eq:DijOKroe} and~\eqref{eq:EijOKroe}, using also
  Lemma~\ref{lem:L1roe}, we obtain
  \begin{align}
    \nonumber
    & \sum_{i,j \in \interi} \dx \, \modulo{\rho^{c,n+1/2}_{i,j+1} - \rho^{c,n+1/2}_{i,j}}
    \\\nonumber
    \leq \
    & \sum_{i,j \in \interi} \dx \left( \modulo{\mathcal{D}_{i,j}^{c,n}}
      + \lambda_x \modulo{\mathcal{E}_{i,j}^{c,n}}\right)
    \\
    %\nonumber
    \label{eq:BVdxroe}
    \leq \
    & \sum_{i,j \in \interi} \dx \, \modulo{\rh{i,j+1} - \rh{i,j}} + \dt \left(
      \norma{\partial_{xy}^2 \vs_1}_{\L\infty} + 2 \,  \epsilon_c  \, C_c  \right) \norma{\rho^c_o}_{\L1}.
    \\
    \nonumber
    %\label{eq:BVdxroe}
    & + 3 \, \dt  \left(
      \norma{\partial_y \vs_1}_{\L\infty}
      +  \epsilon_c \left( 2  \norma{\nabla^2 \eta_c}_{\L\infty} \!+ L_H\norma{\nabla\Tilde{\eta}_c}_{\L\infty}\right) \norma{r_{\Omega,o}}_{\L1}
      \right)
      \sum_{i,j \in \interi} \dy \, \modulo{\rh{i+1,j} - \rh{i,j}}
  \end{align}
  Setting
  \begin{align}
    \label{eq:K1roe}
    K^c_1 = \
    & 3 \left(\norma{\partial_x \vs_1}_{\L\infty}
      + \norma{\partial_y \vs_1}_{\L\infty}
      +  \epsilon_c \, \left( 4 \,
      \norma{\nabla^2 \eta_c}_{\L\infty}+2L_H\norma{\nabla\Tilde{\eta}_c}_{\L\infty}\right) \norma{r_{\Omega,o}}_{\L1}\right) ,
    \\
    \label{eq:K2roe}
    K^c_2 = \
    &  \left( 4 \, \epsilon_c   \, C_c
      + 2 \, \norma{\partial_{xx}^2 \vs_1}_{\L\infty} + \norma{\partial_{xy}^2 \vs_1}_{\L\infty}
      \right)
      \norma{\rho^c_o}_{\L1},
  \end{align}
  by~\eqref{eq:BVdyroe} and~\eqref{eq:BVdxroe} we conclude
  \begin{align*}
    & \sum_{i,j \in \interi} \left(
      \dy \, \modulo{\rho^{c,n+1/2}_{i+1,j} - \rho^{c,n+1/2}_{i,j}}
      +
      \dx \, \modulo{\rho^{c,n+1/2}_{i,j+1} - \rho^{c,n+1/2}_{i,j}}
      \right)
    \\
    \leq \
    & \left( 1 + \dt  K^c_1 \right)
      \sum_{i,j \in \interi} \left(
      \dx \, \modulo{\rh{i,j+1} - \rh{i,j}} +  \dy \, \modulo{\rh{i+1,j} -\rh{i,j}}
      \right)
      +  \dt \, K^c_2.
  \end{align*}
  Analogous computations yield
  \begin{align*}
    & \sum_{i,j \in \interi} \left(
      \dy \, \modulo{\rho^{c,n+1}_{i+1,j} - \rho^{c,n+1}_{i,j}}
      +
      \dx \, \modulo{\rho^{c,n+1}_{i,j+1} - \rho^{c,n+1}_{i,j}}
      \right)
    \\
    \leq \
    & \left( 1 + \dt \, K^c_3 \right)
      \sum_{i,j \in \interi} \left(
      \dx \, \modulo{\rho^{c,n+1/2}_{i,j+1} - \rho^{c,n+1/2}_{i,j}}
      +  \dy \, \modulo{\rho^{c,n+1/2}_{i+1,j} -\rho^{c,n+1/2}_{i,j}}
      \right)
      +  \dt \, K^c_4,
  \end{align*}
  where
  \begin{align}
    \label{eq:K3roe }
    K^c_3 = \
    & 3 \left( \norma{\partial_x \vs_2}_{\L\infty}
      +  \norma{\partial_y \vs_2}_{\L\infty}
      + \epsilon_c \, \left( 4
      \norma{\nabla^2 \eta_c}_{\L\infty} + 2 L_H\,\norma{\nabla\Tilde{\eta}_c}_{\L\infty}\right) \norma{r_{\Omega,o}}_{\L1}\right),
    \\
    \label{eq:K4roe}
    K^c_4 = \
    &  \left(
      4 \, \epsilon_c  \, C_c
      + 2 \, \norma{\partial_{yy}^2 \vs_2}_{\L\infty} + \norma{\partial_{xy}^2 \vs_2}_{\L\infty}
      \right) \norma{\rho^c_o}_{\L1}.
  \end{align}
  Observe that, using the notation~\eqref{eq:K1defroe}
  and~\eqref{eq:K2defroe},
  \begin{align*}
    K^c_1, \, K^c_3 \leq \mathcal{K}^c_1,
    \qquad
    K^c_2, \, K^c_4 \leq \mathcal{K}^c_2.
  \end{align*}
  A recursive argument yields the desired result:
  \begin{align*}
    & \sum_{i,j \in \interi} \left(
      \dy \, \modulo{\rho^{c,n}_{i+1,j} - \rho^{c,n}_{i,j}}
      +
      \dx \, \modulo{\rho^{c,n}_{i,j+1} - \rho^{c,n}_{i,j}}
      \right)
    \\
    \leq \
    & e^{2 \, n \, \dt \, \mathcal{K}^c_1}  \sum_{i,j \in \interi} \left(
      \dx \, \modulo{\rho^{c,0}_{i,j+1} - \rho^{c,0}_{i,j}}
      +  \dy \, \modulo{\rho^{c,0}_{i+1,j} -\rho^{c,0}_{i,j}}
      \right)
      +  \frac{2 \, \mathcal{K}^c_2}{\mathcal{K}^c_1}\left(e^{2 \, n \, \dt \, \mathcal{K}^c_1} -1
      \right).
  \end{align*}
  
\end{proof}

\begin{corollary} {\bf ($\BV$ estimate in space and time)}
  \label{cor:bvxtroe}
  Let $\brho_o \in (\L\infty \cap \BV) (\Omega;
  \reali^N_+)$. Let~\ref{vs}, \ref{H}, \ref{eta}, \eqref{eq:CFLroe-v2}
  hold. Then, for all $c\in\{1,\dots,N\}$ and $t>0$, $\rho^c_\Delta$ 
  constructed through~\eqref{eq:scheme1roe} and~\eqref{eq:scheme2roe}  %Algorithm~\ref{alg:2} 
  satisfies the following
  estimate: for all $n=1, \ldots, N_T$,
  \begin{equation}
    \label{eq:bvxt}
    \sum_{m=0}^{n-1} \sum_{i,j \in \interi}\!
    \dt\! \left(
      \dy  \modulo{\rho^{c,m}_{i+1,j} - \rho^{c,m}_{i,j}}
      + \!\dx \modulo{\rho^{c,m}_{i,j+1} - \rho^{c,m}_{i,j}}\right)
    +
    \sum_{m=0}^{n-1} \sum_{i,j \in \interi}
    \!\dx \dy  \modulo{\rho^{c,m+1}_{i,j} - \rho^{c,m}_{i,j}}
    \leq \mathcal{C}^c_{xt}(t^n)
  \end{equation}
  where
  \begin{equation}
    \label{eq:Cxt}
    \mathcal{C}^c_{xt} (t) = t \left(\mathcal{C}^c_x (t) + 2 \, \mathcal{C}^c_t (t)\right),
  \end{equation}
  with $\mathcal{C}^c_x$ as in~\eqref{eq:Cx} and $\mathcal{C}^c_t$ as in~\eqref{eq:Ct}.
\end{corollary}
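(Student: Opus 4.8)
The plan is to split the left-hand side of~\eqref{eq:bvxt} into its two natural pieces, the time-integrated spatial total variation and the genuine time-increment sum, and to bound each separately, relying on the already-established space $\BV$ bound of Proposition~\ref{prop:bvroe} and the conservation Lemma~\ref{lem:L1roe}.

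First, for the spatial part $\sum_{m=0}^{n-1}\sum_{i,j\in\interi}\dt\,(\dy\,|\rho^{c,m}_{i+1,j}-\rho^{c,m}_{i,j}| + \dx\,|\rho^{c,m}_{i,j+1}-\rho^{c,m}_{i,j}|)$, I would directly invoke the space $\BV$ bound~\eqref{eq:bvspaceroe}, which controls the inner spatial sum at each time level $t^m$ by $\mathcal{C}^c_x(t^m)$. Since $\mathcal{C}^c_x$ as given in~\eqref{eq:Cx} is nondecreasing in $t$, each $\mathcal{C}^c_x(t^m)\le\mathcal{C}^c_x(t^n)$, and multiplying by $\dt$ and summing over $m=0,\dots,n-1$ yields $n\,\dt\,\mathcal{C}^c_x(t^n)=t^n\,\mathcal{C}^c_x(t^n)$.

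The second and harder piece is the discrete $\L1$-Lipschitz-in-time estimate $\sum_{i,j\in\interi}\dx\,\dy\,|\rho^{c,m+1}_{i,j}-\rho^{c,m}_{i,j}|\le 2\,\dt\,\mathcal{C}^c_t(t^m)$, which I would establish directly from the two fractional steps of Algorithm~\ref{alg:2}. Writing $\rho^{c,m+1}_{i,j}-\rho^{c,m}_{i,j}=(\rho^{c,m+1}_{i,j}-\rho^{c,m+1/2}_{i,j})+(\rho^{c,m+1/2}_{i,j}-\rho^{c,m}_{i,j})$ and using~\eqref{eq:scheme1roe}, the half-step increment equals $-\lambda_x[H^{c,m}_{i+1/2}(\rho^{c,m}_{i,j},\rho^{c,m}_{i+1,j})-H^{c,m}_{i-1/2}(\rho^{c,m}_{i-1,j},\rho^{c,m}_{i,j})]$ in the notation~\eqref{eq:10}. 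Adding and subtracting $H^{c,m}_{i+1/2}(\rho^{c,m}_{i-1,j},\rho^{c,m}_{i,j})$ splits this into a term controlled by the Lipschitz continuity of $H^{c,m}$ in its two density arguments, whose constant is the one already isolated for $\mathcal{B}^{c,n}_{i,j}$ in Proposition~\ref{prop:bvroe}, times the local spatial differences $|\rho^{c,m}_{i+1,j}-\rho^{c,m}_{i,j}|$ and $|\rho^{c,m}_{i,j}-\rho^{c,m}_{i-1,j}|$, plus a term measuring the variation of $H^{c,m}$ in its spatial argument, bounded by $\dx$ times $(\norma{\partial_x\vs_1}_{\L\infty}+\epsilon_c(2\norma{\nabla^2\eta_c}_{\L\infty}+L_H\norma{\nabla\tilde\eta_c}_{\L\infty})\norma{r_{\Omega,o}}_{\L1})\,|\rho^{c,m}_{i,j}|$, which are exactly the Lipschitz constants already identified in~\eqref{eq:B-V} and~\eqref{eq:B-F}. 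Summing against $\dx\,\dy$, the first contribution produces $\dt$ times the $x$-spatial total variation, controlled by $\mathcal{C}^c_x(t^m)$ via Proposition~\ref{prop:bvroe}, while the second produces $\dt$ times a multiple of $\norma{\rho^{c,m}}_{\L1}=\norma{\rho^c_o}_{\L1}$ by Lemma~\ref{lem:L1roe}. The same reasoning applied to~\eqref{eq:scheme2roe} handles the increment $\rho^{c,m+1}_{i,j}-\rho^{c,m+1/2}_{i,j}$ in the $y$-direction, and the two contributions together give the bound $2\,\dt\,\mathcal{C}^c_t(t^m)$, with $\mathcal{C}^c_t$ precisely the constant appearing in~\eqref{eq:Ct}.

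Finally, I would again use the monotonicity of $\mathcal{C}^c_t$ in $t$ to replace $\mathcal{C}^c_t(t^m)$ by $\mathcal{C}^c_t(t^n)$, sum over $m=0,\dots,n-1$ to obtain $2\,t^n\,\mathcal{C}^c_t(t^n)$ for the temporal piece, and add this to $t^n\,\mathcal{C}^c_x(t^n)$ from the first piece, yielding exactly $t^n(\mathcal{C}^c_x(t^n)+2\,\mathcal{C}^c_t(t^n))=\mathcal{C}^c_{xt}(t^n)$ as in~\eqref{eq:Cxt}. The main obstacle is the derivation of the time-continuity estimate: although it parallels the flux manipulations already carried out for $\mathcal{B}^{c,n}_{i,j}$ and $\mathcal{E}^{c,n}_{i,j}$, one must track carefully that the Lipschitz-in-density and Lipschitz-in-space constants of the combined numerical flux $H^{c,m}$ assemble into the precise expression defining $\mathcal{C}^c_t$ in~\eqref{eq:Ct}, and that the CFL condition~\eqref{eq:CFLroe-v2} is what keeps these constants finite and the half-step increment well-behaved.
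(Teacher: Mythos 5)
Your proposal follows essentially the same route as the paper: the spatial part is handled by Proposition~\ref{prop:bvroe} plus monotonicity of $\mathcal{C}^c_x$, and the time part by splitting each increment into the two fractional steps, estimating each via the scheme, the Lipschitz properties of the numerical flux (in the density arguments and, through the estimates~\eqref{eq:Jinf} and~\eqref{eq:Jx}, in the spatial argument), the $\L1$ conservation of Lemma~\ref{lem:L1roe}, and the space-$\BV$ bound, exactly as the paper does. The only deviation is your choice to add and subtract the full flux $H^{c,m}_{i+1/2}(\rho^{c,m}_{i-1,j},\rho^{c,m}_{i,j})$ rather than, as in the paper, the terms $v_{i-1/2,j}\,\rho^{c,m}_{i,j}$ and $J^{c,m}_1(x_{i-1/2,j})\,\rho^{c,m}_{i,j}$; with your splitting the spatial-variation bracket acts on $|\rho^{c,m}_{i-1,j}|$ together with either $|\rho^{c,m}_{i,j}|$ or $|\rho^{c,m}_{i,j}-\rho^{c,m}_{i-1,j}|$, not on $|\rho^{c,m}_{i,j}|$ alone, so after summation you pick up either a factor $2$ on the $\norma{\rho^c_o}_{\L1}$ term of~\eqref{eq:Ct} or an extra $O(\dx)\,\mathcal{C}^c_x$ term --- harmless for the compactness purpose, but to land exactly on~\eqref{eq:Ct} one should use the paper's add-and-subtract.
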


\begin{proof}
  By Proposition~\ref{prop:bvroe} we have
  \begin{equation}
    \label{eq:9r}
    \sum_{m=0}^{n-1} \sum_{i,j \in \interi}
    \dt \left(
      \dy \, \modulo{\rho^{c,m}_{i+1,j} - \rho^{c,m}_{i,j}}
      + \dx \, \modulo{\rho^{c,m}_{i,j+1} - \rho^{c,m}_{i,j}}\right)
    \leq
    n \, \dt \, \mathcal{C}^c_x (n \, \dt).
  \end{equation}
  Since
  \begin{displaymath}
    \modulo{\rho^{c,m+1}_{i,j} - \rho^{c,m}_{i,j}}
    \leq
     \modulo{\rho^{c,m+1}_{i,j} - \rho^{c,m+1/2}_{i,j}}
     +
     \modulo{\rho^{c,m+1/2}_{i,j} - \rho^{c,m}_{i,j}},
  \end{displaymath}
  we focus first on
  \begin{displaymath}
    \sum_{i,j \in \interi}
    \dx \, \dy \,\modulo{\rho^{c,m+1/2}_{i,j} - \rho^{c,m}_{i,j}}.
  \end{displaymath}
  By the scheme~\eqref{eq:scheme1roe}, we have, using the notation~\eqref{eq:notvJ},
  \begin{align*}
    \rho^{c,m+1/2}_{i,j} - \rho^{c,m}_{i,j} = \
    & - \lambda_x \left[
      V_1 (x_{i+1/2,j}, \rho^{c,m}_{i,j}, \rho^{c,m}_{i+1,j}) + F^c (\rho^{c,m}_{i,j}, \rho^{c,m}_{i+1,j}, J^{c,m}_1 (x_{i+1/2,j}))\right.
    \\
    & \qquad \left.
      - V_1 (x_{i-1/2,j}, \rho^{k,m}_{i-1,j}, \rho^{c,m}_{i,j}) - F^c (\rho^{c,m}_{i-1,j}, \rho^{c,m}_{i,j}, J^{c,m}_1 (x_{i-1/2,j}))
      \right]
    \\
    = \
    &  - \lambda_x \left[
      v_{i+1/2,j} \, \rho^{c,m}_{i,j} + \min\left\{0, v_{i+1/2,j}\right\} (\rho^{c,m}_{i+1,j} - \rho^{c,m}_{i,j})\right.
    \\
    & -  v_{i-1/2,j} \, \rho^{c,m}_{i-1,j} - \min\left\{0, v_{i-1/2,j}\right\} (\rho^{c,m}_{i,j} - \rho^{c,m}_{i-1,j})
    \\
    & + J_1^{c,m}(x_{i+1/2,j}) \, \rho^{c,m}_{i,j} + \min\left\{0, J^{c,m}_1(x_{i+1/2,j})\right\}
      (\rho^{c,m}_{i+1,j} - \rho^{c,m}_{i,j})
    \\
    & -  J^{c,m}_1(x_{i-1/2,j}) \, \rho^{c,m}_{i-1,j} - \min\left\{0, J^{c,m}_1(x_{i-1/2,j})\right\}
      (\rho^{c,m}_{i,j} - \rho^{c,m}_{i-1,j})
    \\
    & \left.
      \pm v_{i-1/2,j} \, \rho^{c,m}_{i,j} \pm J^{c,m}_1(x_{i-1/2,j})\, \rho^{c,m}_{i,j}
      \right]
    \\
    = \
    & - \lambda_x \left[
      \dx \, \partial_x \vs_1 (\xi_i, y_j) \, \rho^{c,m}_{i,j}
      \right.
    \\
    & + \left(v_{i-1/2,j} -  \min\left\{0, v_{i-1/2,j}\right\} \right)(\rho^{c,m}_{i,j} - \rho^{c,m}_{i-1,j})
    \\
    & + \min\left\{0, v_{i+1/2,j}\right\} (\rho^{c,m}_{i+1,j} - \rho^{c,m}_{i,j})
    \\
    & + \left( J_1^{c,m}(x_{i+1/2,j}) - J^{c,m}_1(x_{i-1/2,j})\right)\, \rho^{c,m}_{i,j}
    \\
    & + \left(
      J^{c,m}_1(x_{i-1/2,j}) - \min\left\{0, J^{c,m}_1(x_{i-1/2,j})\right\}
      \right) (\rho^{c,m}_{i,j} - \rho^{c,m}_{i-1,j})
      \\
    & \left.+ \min\left\{0, J^{c,m}_1(x_{i+1/2,j})\right\}
      (\rho^{c,m}_{i+1,j} - \rho^{c,m}_{i,j})\right]
    \\
    \leq \
    &
      \dt \left(
      \norma{\partial_x \vs_1}_{\L\infty}
      + \epsilon_c \,\left(2\,\norma{\nabla^2 \eta_c}_{\L\infty}+ L_H\,\norma{\nabla\Tilde{\eta}_c}_{\L\infty} \right) \norma{r_{\Omega,o}}_{\L1}
      \right) \rho^{c,m}_{i,j}
    \\
    & + \lambda_x \left( \norma{\vs_1}_{\L\infty} + \epsilon_c  \right)
      \left( \modulo{\rho^{c,m}_{i,j} - \rho^{c,m}_{i-1,j}} + \modulo{\rho^{c,m}_{i+1,j} -\rho^{c,m}_{i,j}}\right),
  \end{align*}
  where $\xi_i \in \ ]x_{i-1/2}, x_{i+1/2}[$ and we used
  \eqref{eq:Jinf} and~\eqref{eq:Jx}.  Therefore,
  \begin{align*}
     \sum_{i,j \in \interi}
    \dx  \dy  & \modulo{\rho^{c,m+1/2}_{i,j} - \rho^{c,m}_{i,j}} 
    \\
    & \leq \
    \dt \left(
      \norma{\partial_x \vs_1}_{\L\infty} \!\!
      +  \epsilon_c \left(2 \norma{\nabla^2 \eta_c}_{\L\infty} \!\!+\! L_H\norma{\nabla\Tilde{\eta}_c}_{\L\infty}\right) \norma{r_{\Omega,o}}_{\L1}
    \right)\!
     \norma{\rho^c_o}_{\L1}
    \\
    & \quad+ 2 \, \dt \,  \left(\norma{\vs_1}_{\L\infty} + \epsilon_c  \right)
    \sum_{i,j \in \interi} \dy \, \modulo{\rho^{c,m}_{i+1,j} - \rho^{c,m}_{i,j}}    
    \\
    & \leq \
      \dt \left(
      \norma{\partial_x \vs_1}_{\L\infty}\!\!
      + \epsilon_c \left(2  \norma{\nabla^2 \eta_c}_{\L\infty} \!\!+ \!L_H\norma{\nabla\Tilde{\eta}_c}_{\L\infty}\right) \norma{r_{\Omega,o}}_{\L1}
    \right) \!\norma{\rho^c_o}_{\L1}
    \\
    &  \quad+ 2 \, \dt \,  \left( \norma{\vs_1}_{\L\infty} + \epsilon_c   \right)
    \mathcal{C}^c_x (m \, \dt)
    \\
    &\leq \
     \dt \, \mathcal{C}^c_t (m \, \dt),
  \end{align*}
  where we set
  \begin{align}
    \label{eq:Ct}
    \mathcal{C}^c_t (s) = &
    2 \,\left(\norma{\boldsymbol{\vs}}_{\L\infty}
      + \epsilon_c  \right)\,\mathcal{C}^c_x (s)
      \\
      \nonumber
    & +  \left(
      \norma{\nabla \boldsymbol{\vs}}_{\L\infty}
      + \epsilon_c \left(2  \norma{\nabla^2 \eta_c}_{\L\infty} + L_H\norma{\nabla\Tilde{\eta}_c}_{\L\infty}\right)\norma{r_{\Omega,o}}_{\L1}
    \right)\! \norma{\rho^c_o}_{\L1}.
  \end{align}
  Analogously, we get
  \begin{align*}
    \sum_{i,j \in \interi}
    \dx  \dy & \modulo{\rho^{c,m+1}_{i,j} - \rho^{c,m+1/2}_{i,j}}
    \\
    & \leq \
    \dt \left(
      \norma{\partial_y \vs_2}_{\L\infty}
      \!\!+ \epsilon_c \left(2  \norma{\nabla^2 \eta_c}_{\L\infty} \!\!+\! L_H\norma{\nabla\Tilde{\eta}_c}_{\L\infty}\right) \norma{r_{\Omega,o}}_{\L1}
    \right) \!\norma{\rho^c_o}_{\L1}
    \\
    & \quad + 2\,  \dt \,  \left( \norma{\vs_2}_{\L\infty} + \epsilon_c \right)
    \sum_{i,j \in \interi} \dx \, \modulo{\rho^{c,m+1/2}_{i,j+1} - \rho^{c,m+1/2}_{i,j}}
    \\
    & \leq \
      \dt \left(
      \norma{\partial_y \vs_2}_{\L\infty}
      \!\!+ \epsilon_c \left(2  \norma{\nabla^2 \eta_c}_{\L\infty} \!\!+\! L_H\norma{\nabla\Tilde{\eta}_c}_{\L\infty}\right) \norma{r_{\Omega,o}}_{\L1}
    \right)\! \norma{\rho^c_o}_{\L1}
    \\
    & \quad + 2\,   \dt \,  \left(\norma{\vs_2}_{\L\infty} + \epsilon_c \right)
    \mathcal{C}^c_x (m \, \dt)
    \\
    & \leq \
     \dt \, \mathcal{C}^c_t (m \, \dt).
  \end{align*}
  Hence
  \begin{equation}
    \label{eq:8r}
    \sum_{m=0}^{n-1} \sum_{i,j \in \interi}
    \dx \, \dy \, \modulo{\rho^{c,m+1}_{i,j} - \rho^{c,m}_{i,j}}
    \leq
     2 \, n \, \dt \, \mathcal{C}^c_t (n \, \dt),
  \end{equation}
  which, together with~\eqref{eq:9r}, completes the proof.
\end{proof}

\subsection{Discrete entropy inequalities}
\label{sec:dei}
Following~\cite{ACG2015}, see also~\cite{CMfs, CMmonotone}, introduce
the following notation: for $i,j \in \interi$, $n=0, \ldots, N_T-1$
and $\kappa \in \reali$,
\begin{align*}
  \Phi_{i+1/2,j}^{c,n} (u,v) = \
  &  V_1(x_{i+1/2,j},u \vee \kappa, v \vee \kappa)
    +  F^c (u \vee \kappa, v \vee \kappa, J^{c,n}_1 (x_{i+1/2,j}))
  \\
  & - V_1(x_{i+1/2,j}, u \wedge \kappa, v \wedge \kappa)
    - F^c(u \wedge \kappa, v \wedge \kappa, J^{c,n}_1 (x_{i+1/2,j})),
  \\
  \Gamma_{i,j+1/2}^{c,n} (u,v) = \
  &  V_2 (x_{i,j+1/2},u \vee \kappa, v \vee \kappa)
    + F^c (u \vee \kappa, v \vee \kappa, J^{c,n}_2 (x_{i,j+1/2}))
  \\
  &
    - V_2(x_{i,j+1/2}, u \wedge \kappa, v \wedge \kappa)
    - F^c(u \wedge \kappa, v \wedge \kappa, J^{c,n}_2 (x_{i,j+1/2})),
\end{align*}
with $V_1$, $V_2$ and $F$ defined as in~\eqref{eq:fx1}, \eqref{eq:gx1}
and~\eqref{eq:fx2} respectively and where $x\vee y=\max(x,y)$ and $x\wedge y=\min(x,y)$. We can then apply the following Lemma to conclude the convergence towards a weak entropy solution

\begin{lemma} {\bf (Discrete entropy condition)}
  \label{lem:die}
  Fix $\brho_o \in (\L\infty \cap \BV) (\Omega;
  \reali^+)$. Let~\ref{vs}, \ref{H}, \ref{eta}, \eqref{eq:CFLroe-v2}
  hold. Then, the solution $\rho^c_\Delta$ constructed through~\eqref{eq:scheme1roe} and~\eqref{eq:scheme2roe} %Algorithm~\ref{alg:2} 
  satisfies the following
  discrete entropy inequality: for $i,j \in \interi$, for
  $n=0, \ldots, N_T-1$ and $\kappa \in \reali$,
  \begin{align*}
    \modulo{\rho^{c,n+1}_{i,j} - \kappa} - \modulo{\rh{i,j} - \kappa}
    + \lambda_x \, \left(
    \Phi^{c,n}_{i+1/2,j}(\rh{i,j},\rh{i+1,j}) - \Phi^{c,n}_{i-1/2,j}(\rh{i-1,j},\rh{i,j})
    \right)
    &
    \\
    + \lambda_x \, \sgn(\rho^{c,n+1/2}_{i,j} - \kappa)
    \left(\vs_1(x_{i+1/2,j}) - \vs_1(x_{i-1/2,j}) \right) \kappa
    &
    \\
    + \lambda_x \, \sgn(\rho^{c,n+1/2}_{i,j} - \kappa)
    \left( J_1^{c,n}(x_{i+1/2,j})  - J_1^{c,n}(x_{i-1/2,j}) \right) \kappa
    &
    \\
    + \lambda_y \, \left(
    \Gamma^{c,n}_{i,j+1/2}(\rho^{c,n+1/2}_{i,j},\rho^{c,n+1/2}_{i,j+1}) - \Gamma^{c,n}_{i,j-1/2}(\rho^{c,n+1/2}_{i,j-1},\rho^{c,n+1/2}_{i,j})
    \right)
    &
    \\
    + \lambda_y \, \sgn(\rho^{c,n+1}_{i,j} - \kappa)
    \left(\vs_2(x_{i,j+1/2}) - \vs_2(x_{i,j-1/2}) \right) \kappa
    &
    \\
    + \lambda_y \, \sgn(\rho^{c,n+1}_{i,j} - \kappa)
    \left( J_2^{c,n}(x_{i,j+1/2})  - J_2^{c,n}(x_{i,j-1/2}) \right) \kappa
    & \leq 0.
  \end{align*}
\end{lemma}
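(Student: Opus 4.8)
The plan is to derive a one-cell Kru\v zkov inequality for each of the two fractional steps of Algorithm~\ref{alg:2} and then add them. For the $x$-step~\eqref{eq:scheme1roe}, fix $\kappa\in\reali$ and read the update as the three-point map
\[
  \rho^{c,n+1/2}_{i,j}=\mathcal{H}_{i,j}\bigl(\rh{i-1,j},\rh{i,j},\rh{i+1,j}\bigr)
  :=\rh{i,j}-\lambda_x\Bigl[H^{c,n}_{i+1/2,j}(\rh{i,j},\rh{i+1,j})-H^{c,n}_{i-1/2,j}(\rh{i-1,j},\rh{i,j})\Bigr],
\]
with $H^{c,n}_{h,\ell}$ as in~\eqref{eq:10}. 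The first step is to check that $\mathcal{H}_{i,j}$ is nondecreasing in each of its three entries under~\eqref{eq:CFLroe-v2}: the coefficients of $\rh{i-1,j}$ and $\rh{i+1,j}$ are $\lambda_x\bigl(\max\{0,\vs_1\}+\max\{0,J^{c,n}_1\}\bigr)\big|_{x_{i-1/2,j}}\ge0$ and $-\lambda_x\bigl(\min\{0,\vs_1\}+\min\{0,J^{c,n}_1\}\bigr)\big|_{x_{i+1/2,j}}\ge0$, i.e.\ exactly the nonnegative quantities $\theta^{c,n}_{i},\delta^{c,n}_{i}$ of~\eqref{eq:deltainroe}--\eqref{eq:thetainroe}, while the diagonal coefficient equals $1-\delta^{c,n}_{i-1}-\theta^{c,n}_{i+1}\ge\tfrac13>0$ thanks to the bounds $\delta^{c,n}_i,\theta^{c,n}_i\in[0,\tfrac13]$ already established in Proposition~\ref{prop:bvroe}.

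The crucial observation is that the constant state $\kappa$ is \emph{not} preserved by $\mathcal{H}_{i,j}$, because the flux coefficients $\vs_1,J^{c,n}_1$ differ from interface to interface. Since the Roe increments $\min\{0,\cdot\}(w-u)$ vanish on constant data,
\[
  \mathcal{H}_{i,j}(\kappa,\kappa,\kappa)
  =\kappa-\lambda_x\kappa\Bigl[\bigl(\vs_1(x_{i+1/2,j})-\vs_1(x_{i-1/2,j})\bigr)+\bigl(J^{c,n}_1(x_{i+1/2,j})-J^{c,n}_1(x_{i-1/2,j})\bigr)\Bigr]=:\kappa^{\star}.
\]
Applying monotonicity to the cellwise maxima $\rh{\bullet}\vee\kappa$ and minima $\rh{\bullet}\wedge\kappa$ and subtracting --- the Crandall--Majda argument --- gives
\[
  \modulo{\rho^{c,n+1/2}_{i,j}-\kappa^{\star}}
  \le \mathcal{H}_{i,j}(\rh{\bullet}\vee\kappa)-\mathcal{H}_{i,j}(\rh{\bullet}\wedge\kappa)
  =\modulo{\rh{i,j}-\kappa}-\lambda_x\Bigl(\Phi^{c,n}_{i+1/2,j}(\rh{i,j},\rh{i+1,j})-\Phi^{c,n}_{i-1/2,j}(\rh{i-1,j},\rh{i,j})\Bigr),
\]
where the last equality is just the definition of the numerical entropy flux, since $\Phi^{c,n}_{i+1/2,j}(u,v)=H^{c,n}_{i+1/2,j}(u\vee\kappa,v\vee\kappa)-H^{c,n}_{i+1/2,j}(u\wedge\kappa,v\wedge\kappa)$.

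It remains to trade $\kappa^\star$ for $\kappa$. Here I would invoke convexity of $t\mapsto\modulo t$ in the form $\modulo X-\modulo Y\le\sgn(X)\,(X-Y)$ with $X=\rho^{c,n+1/2}_{i,j}-\kappa$ and $Y=\rho^{c,n+1/2}_{i,j}-\kappa^\star$, so that $X-Y=\kappa^\star-\kappa$. Inserting the explicit value of $\kappa^\star-\kappa$ turns the extra term into precisely the two $\sgn(\rho^{c,n+1/2}_{i,j}-\kappa)$ contributions carrying $\vs_1$ and $J^{c,n}_1$, which yields the first three displayed lines of the claim. Running the identical argument on the $y$-step~\eqref{eq:scheme2roe} --- now with $V_2$, $J^{c,n}_2$, the entropy flux $\Gamma^{c,n}$, the shifted constant built from $\vs_2,J^{c,n}_2$, and the slope of $\modulo\cdot$ taken at the fully updated state $\rho^{c,n+1}_{i,j}-\kappa$ --- produces the remaining three lines. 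Adding the two inequalities, the intermediate term $\modulo{\rho^{c,n+1/2}_{i,j}-\kappa}$ telescopes away and the statement follows.

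I expect the only genuine obstacle to be the monotonicity of $\mathcal{H}_{i,j}$, equivalently the nonnegativity of its diagonal coefficient, which is exactly where~\eqref{eq:CFLroe-v2} enters through the $[0,\tfrac13]$ bounds of Proposition~\ref{prop:bvroe}; note that~\eqref{eq:CFLroe-v2} is stricter than the positivity condition~\eqref{eq:CFLroe}, under which monotonicity in fact already holds. A secondary point requiring care is that in each fractional step the argument of $\sgn$ must be the \emph{post}-step value: this is forced by the convexity inequality, whose slope is evaluated at $X$, and is precisely what makes the intermediate absolute values cancel when the two steps are summed.
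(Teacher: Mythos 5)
Your proof is correct and is precisely the argument the paper points to: the paper omits the proof of Lemma~\ref{lem:die}, referring to \cite[Proposition~2.8]{ACT2015} and \cite[Lemma~2.8]{ACG2015}, whose content is exactly your Crandall--Majda scheme --- monotonicity of the three-point update under~\eqref{eq:CFLroe-v2}, the $\vee\kappa$/$\wedge\kappa$ comparison giving the Kru\v zkov entropy fluxes $\Phi^{c,n}$, $\Gamma^{c,n}$, the convexity inequality $\modulo{X}-\modulo{Y}\leq\sgn(X)(X-Y)$ to convert the shifted constant $\kappa^\star$ into the $\sgn$-weighted source terms carrying the space-dependence of $\vs$ and $J^{c,n}$, run once per fractional step and summed so that $\modulo{\rho^{c,n+1/2}_{i,j}-\kappa}$ telescopes. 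The only blemishes are harmless index shifts in identifying the stencil coefficients with~\eqref{eq:deltainroe}--\eqref{eq:thetainroe} (the coefficient of $\rh{i-1,j}$ is $\theta^{c,n}_{i-1}$, not $\theta^{c,n}_{i}$, and the diagonal coefficient is $1-\theta^{c,n}_{i}-\delta^{c,n}_{i-1}$, not $1-\delta^{c,n}_{i-1}-\theta^{c,n}_{i+1}$), which do not affect the $\left[0,\tfrac13\right]$ bounds and hence leave the conclusion intact.
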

\noindent The proof is omitted, being entirely analogous to that of~\cite[Proposition~2.8]{ACT2015}, see also~\cite[Lemma~2.8]{ACG2015}.

\subsection{Lipschitz continuous dependence on initial data}
\label{sec:lipdep}

\begin{proposition}
  \label{prop:lipdep}
  Fix $T>0$. Let~\ref{vs}, \ref{H} and \ref{eta} hold. Let
  $\brho_o, \, \boldsymbol{\sigma}_o \in (\L\infty \cap \BV) (\Omega;
  \reali^N_+)$. Call $\rho^c$ and $\sigma^c$ the corresponding entropy weak solutions
  to~\eqref{eq:1} for $c\in\{1,\dots,N\}$. Then the following estimate holds:
  \begin{displaymath}
     \norma{\rho^c (t) - \sigma^c (t)}_{\L1 (\Omega;\R)} \leq \mathcal{K}^c(t)
     \norma{\rho^c_o - \sigma^c_o}_{\L1 (\Omega;\R)} 
  \end{displaymath}
  where $\mathcal{K}^c(t)$ as defined in~\cite[Theorem 2]{GoatinRossi2024} is given by 
  \begin{displaymath}
      \mathcal{K}^c(t) = e^{\int_0^t\mathcal{K}^c(s)ds}.
  \end{displaymath} where $\mathcal{K}^c(t)$ depends on $\epsilon,\|\nabla \eta_c\|_\L\infty, \|\nabla^2 \eta_c\|_\L\infty, L_H, \|\nabla \Tilde{\eta}\|_\L\infty$ and $\|\nabla^2 \Tilde{\eta}\|_\L\infty$.
\end{proposition}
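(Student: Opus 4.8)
The plan is to reduce the claim to the scalar stability framework of \cite[Theorem~2]{GoatinRossi2024} by viewing each component as a Kru\v zkov entropy solution of a scalar conservation law with a prescribed, solution-dependent velocity. By Definition~\ref{def:sol}, $\rho^c$ solves $\del_t\rho^c+\div(\rho^c\,V^c[\brho])=0$ and $\sigma^c$ solves $\del_t\sigma^c+\div(\sigma^c\,V^c[\boldsymbol{\sigma}])=0$, where $V^c[\cdot]$ is the field of Definition~\ref{def:sol} built from the non-local quantity $r_\Omega$, respectively $s_\Omega$. First I would invoke the standard $\L1$-comparison estimate obtained by doubling the space variable in the spirit of Kru\v zkov for two entropy solutions carrying different velocity fields. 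Since the flux is linear in the unknown, $f^c(t,x,y,u)=u\,V^c(t,x,y)$, this yields, for every $t\in[0,T]$,
\[
\norma{\rho^c(t)-\sigma^c(t)}_{\L1(\Omega)}
\le \norma{\rho^c_o-\sigma^c_o}_{\L1(\Omega)}
+\int_0^t\!\Big(\norma{\div V^c[\brho]}_{\L\infty}\,\norma{\rho^c-\sigma^c}_{\L1}
+\mathcal{L}^c(s)\Big)\d s,
\]
where $\mathcal{L}^c(s)$ gathers the velocity mismatch and is controlled by $\tv(\sigma^c)\,\norma{V^c[\brho]-V^c[\boldsymbol{\sigma}]}_{\L\infty}+\norma{\sigma^c}_{\L1}\,\norma{\div(V^c[\brho]-V^c[\boldsymbol{\sigma}])}_{\L\infty}$; the BV and $\L1$ norms of $\sigma^c$ entering here are finite and uniform in time thanks to Theorem~\ref{thm:main}, and the same theorem bounds $\norma{\div V^c[\brho]}_{\L\infty}$ through the quantities already appearing in~\eqref{eq:Cinf}.

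The crucial step is to dominate the velocity mismatch by the $\L1$-distance of the solutions. Writing $\Psi(z)=z/\sqrt{1+\norma{z}^2}$, so that $V^c[\brho]-V^c[\boldsymbol{\sigma}]=-\epsilon_c\big(H(\tilde{\eta}*r-r_{max})\,\Psi(\nabla(\eta_c*r_\Omega))-H(\tilde{\eta}*s-r_{max})\,\Psi(\nabla(\eta_c*s_\Omega))\big)$, I would use that $\Psi$ is bounded and $1$-Lipschitz, that $H$ is Lipschitz with constant $L_H$ by~\ref{H}, and Young's convolution inequality together with the $\C3\cap\W3\infty$ regularity of the mollifiers from~\ref{eta}. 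Since the obstacle component $\rho^{N+1}_\Omega$ is a fixed function of the domain and thus identical for both solutions, one has $r_\Omega-s_\Omega=r-s=\sum_{k=1}^{N}\alpha_k(\rho^k-\sigma^k)$, whence
\[
\norma{V^c[\brho]-V^c[\boldsymbol{\sigma}]}_{\L\infty}+\norma{\div(V^c[\brho]-V^c[\boldsymbol{\sigma}])}_{\L\infty}
\le C_c\sum_{k=1}^{N}\alpha_k\,\norma{\rho^k-\sigma^k}_{\L1},
\]
with $C_c$ depending only on $\epsilon_c$, $L_H$ and the listed norms of $\eta_c,\tilde{\eta}$.

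Inserting these bounds produces an integral inequality whose right-hand side for class $c$ couples to all classes through $r_\Omega$. I would therefore combine the $N$ inequalities, absorb the constants into a single time-dependent coefficient $\mathcal{K}_c(s)$ built from $\norma{\nabla\boldsymbol{\vs}}_{\L\infty}$, the BV bound of Theorem~\ref{thm:main} and the constant $C_c$ above, and close the argument with Gronwall's lemma, which delivers the exponential factor $\mathcal{K}^c(t)=e^{\int_0^t\mathcal{K}_c(s)\d s}$. The main obstacle is exactly this coupling: the velocity of each class sees every other class through $r_\Omega$, so the per-component statement must be recovered either by the refined bookkeeping of \cite[Theorem~2]{GoatinRossi2024} or by first estimating the aggregated distance $\sum_{c}\norma{\rho^c-\sigma^c}_{\L1}$ and transferring back. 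A secondary technical point is that controlling $\div(V^c[\brho]-V^c[\boldsymbol{\sigma}])$ in $\L\infty$ forces second-order derivatives of the convolutions, which is precisely where the $\C3\cap\W3\infty$ regularity in~\ref{eta} and the boundedness of $H'$ in~\ref{H} are needed.
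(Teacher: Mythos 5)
Your proposal is correct in outline, but it takes a genuinely different route from the paper. The paper's proof is a verification-and-citation argument: it checks that the full velocity field $\boldsymbol{\vs}+\boldsymbol{v_c^{dyn}}$ lies in $\C2(\reali^2;\reali^2)$ with uniformly bounded $\C2$-norm (using \ref{vs}, \ref{H}, \ref{eta}), that the non-local operator $\boldsymbol{I_c}$ maps $\L\infty$ into $\C2$, i.e.\ it verifies the structural hypotheses $(\nu)$ and $(\mathcal{J})$ of \cite[Theorem~2.4]{GoatinRossi2024} together with the Appendix lemma, and then invokes that theorem wholesale. You instead unfold the machinery hidden inside that citation: the Kru\v zkov doubling-of-variables $\L1$ stability estimate for two scalar balance laws with different velocity fields, the bound of the velocity mismatch $\norma{V^c[\brho]-V^c[\boldsymbol{\sigma}]}_{\L\infty}+\norma{\div(V^c[\brho]-V^c[\boldsymbol{\sigma}])}_{\L\infty}\lesssim \norma{r-s}_{\L1}$ (correctly exploiting the cancellation $r_\Omega-s_\Omega=r-s$ of the obstacle term, the Lipschitz continuity of $H$ and of $z\mapsto z/\sqrt{1+\norma{z}^2}$, and Young's inequality), and a Gronwall closure over the coupled system; this is essentially the proof of the cited theorem itself. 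The paper's route buys brevity and rigour by reference; yours buys self-containedness and makes visible exactly where each hypothesis enters. Two caveats on your version. First, as you yourself flag, the Gronwall step honestly yields only the aggregated estimate $\sum_c\norma{\rho^c(t)-\sigma^c(t)}_{\L1}\leq e^{Kt}\sum_c\norma{\rho^c_o-\sigma^c_o}_{\L1}$, since class $c$ is driven by all classes through $r_\Omega$; the per-component inequality with only $\norma{\rho^c_o-\sigma^c_o}_{\L1}$ on the right cannot be transferred back from it (take $\rho^c_o=\sigma^c_o$ but $\rho^k_o\neq\sigma^k_o$ for $k\neq c$), so the statement must be read in the aggregated form delivered by \cite[Theorem~2]{GoatinRossi2024}. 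Second, bounding $\norma{\div(V^c[\brho]-V^c[\boldsymbol{\sigma}])}_{\L\infty}$ produces a term of the form $\left(H'(\tilde{\eta}*r-r_{max})-H'(\tilde{\eta}*s-r_{max})\right)(\nabla\tilde{\eta}*r)$, which requires local Lipschitz continuity of $H'$ and not merely the bound $L_H$; the smoothness in \ref{H} supplies this, but your claimed dependence of the constant $C_c$ omits it.
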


\begin{proof}
  For this proof we rely on a result in \cite{GoatinRossi2024}, where the stability with respect to the initial data was shown under less restrictive assumptions. To show the Lipschitz estimate with respect to the initial data it is only necessary to verify the assumptions $(\nu)$ and $(\mathcal{J})$ of \cite[Theorem 2.4]{GoatinRossi2024}. In our setting it follows by assumptions \ref{vs},\ref{H} and \ref{eta} that $\boldsymbol{\vs+v_c^{dyn}}\in \C2(\reali^2,\reali^2)$ and $\|\boldsymbol{(\vs+v_c^{dyn})}(t,\cdot,\cdot)\|_{\C2}$ is uniformly bounded for all $t\geq0$ and for all $c=1,...,N$. Since $\eta_c \in (\C3 \cap \W3\infty )(\reali^2; \reali^+)$ for all $c\in\{1,\dots,N\}$ the nonlocal operator $\boldsymbol{I_c}:\L\infty(\reali^2;\reali^{N+1})\rightarrow \C2(\reali^2;\reali^{m})$. Together with Lemma A.1 the assumptions of  \cite[Theorem 2.4]{GoatinRossi2024} are met and the Lipschitz continuity with respect to the initial data follows.
\end{proof}

\section{Numerical results}
\label{sec:results}

We follow the test setting given in \cite{Rossietal} for two classes of objects with different parameters $\sigma_c$, $c\in\{1,2\}$, using the Roe scheme stated in~\eqref{eq:scheme1roe} and~\eqref{eq:scheme2roe}.%detailed in~Algorithm~\ref{alg:2}

%Let $\rho_\Delta^{LXF}$ be the solution computed with the Lax-Friedrichs scheme and $\rho_\Delta^{ROE}$ the solution computed with the Roe scheme.

\subsection{Test setting}

We simulate a setting where particles in the shape of metal cylinders are transported on a conveyor belt moving with speed $\| \boldsymbol{\vs}\|=v_T = 0.1$ m/s and are redirected by a diverter which is positioned at an angle \(\theta = 55\) degrees.
%The point $(x_d,y_d)$ marks the vertex of the diverter.% 
The static velocity field is given by $\boldsymbol{\vs}=(0.1\;0)^T$. Figure 1 provides an illustration of the conveyor belt and the obstacle. 
Setting the mass of the diverter (Region C) high enough ($>\rho_{max}$) prohibits parts from passing through the diverter, see~\cite{burger2020} and \cite{goatin2023}. 

\begin{figure}[H]
\centering
\setlength{\abovecaptionskip}{2pt}
\begin{tikzpicture}
\node[anchor=south west,inner sep=0] (Bild) at (0,0)
{\includegraphics[width=0.4\textwidth]{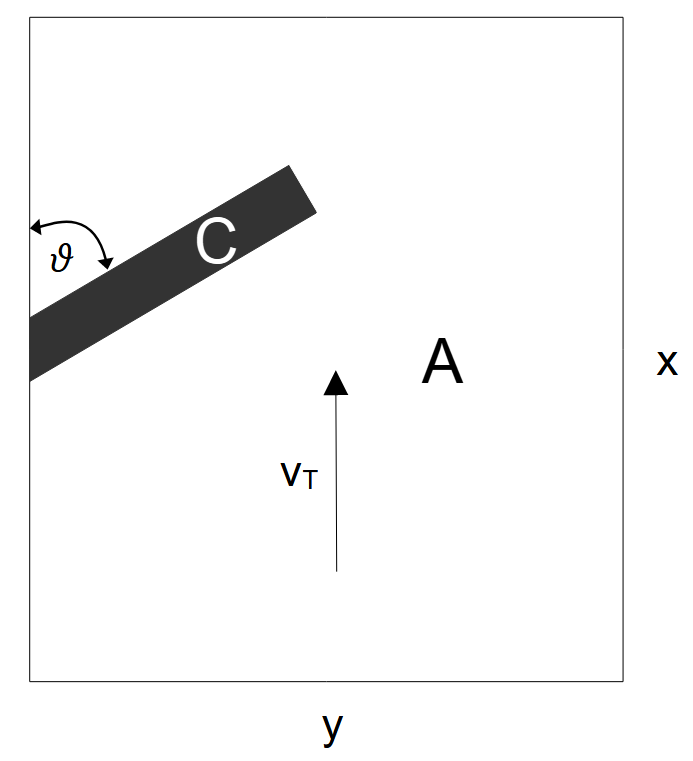}};
\begin{scope}[x=(Bild.south east),y=(Bild.north west)]
	\node (E5) at (0.2,0.4) {};
\node (E6) at (0.5,0.4) {};
%\draw[->,thick] (E5) to node [below,text width = 2cm, align=center] {$v_T$}(E6);

%  \draw
%(0.7,0.58) coordinate (a) node[right] {}
%(0.49,0.92) coordinate (b) node[left] {}
%(0.7,0.92) coordinate (c) node[above right] {}
%pic["$\theta$", very thick, draw=black, <->, angle eccentricity=0.7, angle radius=1cm]
%{angle=a--b--c};

\end{scope}
\end{tikzpicture}
\caption{Schematic view of the static field of the conveyor belt.}
\label{img:staticvelocityfield}
\setlength{\abovecaptionskip}{10pt}
\end{figure}

\subsection{Discretisation and solution properties}

For the numerical model, we introduce a uniform cartesian grid with \(\Delta x=\Delta y\) on the selected area of the conveyor belt given by $\Omega=[0,0.7]\times[0,0.8]$ with $N_x=141$ and $N_y=161$ cells. Initial conditions for the density at time \(t=0\) are given by experimental data, where a total of $N=192$ particles were placed on the conveyor belt in the area $\Omega_0 = [0,0.4]\times[0,0.6]$. For given initial positions of the cargo $(\mathbf{x}_{i}^0)_{i\leq N_n}\subset\Omega_0$ the initial density is given by 
\begin{align*}
    \rho_0(\mathbf{x}) = \frac{\gamma}{2\pi\rho_{\max}}\sum_{i=1}^{N} e^{-\frac{1}{2}\gamma\|\mathbf{x}-\mathbf{x}_{i}^0\|_2^2},\qquad \gamma = 0.02, \quad \rho_{\max}= 2004.
\end{align*}
following \cite{original}. We then divide the initial mass vertically at $x=1/3$ into two separate species $c\in\{1,2\}$, where we assign each population a different \(\sigma_c\). The PCE coefficients $\alpha_c$ are set to $\alpha_1=2$ and $\alpha_2=1$. The mollifier $\eta_c$, which is used in the operator $\boldsymbol{I}_c[\rho]$ given in~\eqref{eq:vdyn}, is chosen as follows
\begin{align*}
\eta_c(x) = \frac{\sigma_c}{2 \pi} e^{-\frac{1}{2} \sigma_c \vert \vert x \vert \vert_2^2},\quad c=1,2,
\end{align*}
with $\sigma_1 = 5000$ and $\sigma_2 = 2\sigma_1 = 10 000$. 
The mollifier $\tilde{\eta}$, which is used in the Heaviside function $H$, see~\eqref{eq:vdyn}, is defined in the same way:
\begin{align*}
\tilde{\eta}(x) = \frac{\sigma_3}{2 \pi} e^{-\frac{1}{2} \sigma_3 \vert \vert x \vert \vert_2^2},
\end{align*}
where we take $\sigma_3 = \sigma_2$.
In the original model formulation~\cite{original}, the Heaviside function was introduced to avoid densities larger than $\rho_{\max}$. In this work we use the approximation $H_t$ (atan), obtained using the inverse tangent. 

\begin{equation}
\label{eq:Ht}
    H_t(u) = \frac{\arctan(50 (u-1))}{\pi} + 0.5.
\end{equation}
Using the inverse tangent approximation corresponds to activating the collision operator $\boldsymbol{I}_c[\rho]$ very close to $r_{\max}$, see \cite{Rossietal}. Due to the smoothing of $H$ and the introduction of the mollifier $\Tilde{\eta}$, the maximal density constraint is not satisfied. Thus, there is no maximum principle.
\\
\\
For our comparison with a microscopic simulation, we employ a collision detection engine based on the results on material flow developed in~\cite{original}.
We consider two configurations. First we simulate the case where the smaller particle class $c=1$ (blue) is placed upstream of the larger particle class $c=2$ (red). Placing the larger objects behind the smaller ones, the smaller particles arrive first at the obstacle. As it can be seen in the top row of Figure~\ref{Figure2}, the smaller particles are pushed by the incoming larger ones to the edge of the obstacle. We can also observe that the particles do not mix in the outflow, with the smaller particles (blue) staying on the left side. We observe the same effect in the microscopic simulation in Figure~\ref{Figure2}, bottom.

\begin{figure}[htb]
    \centering % <-- added
\begin{subfigure}{0.25\textwidth}
  
  \reflectbox{\includegraphics[angle=90,width=\linewidth]{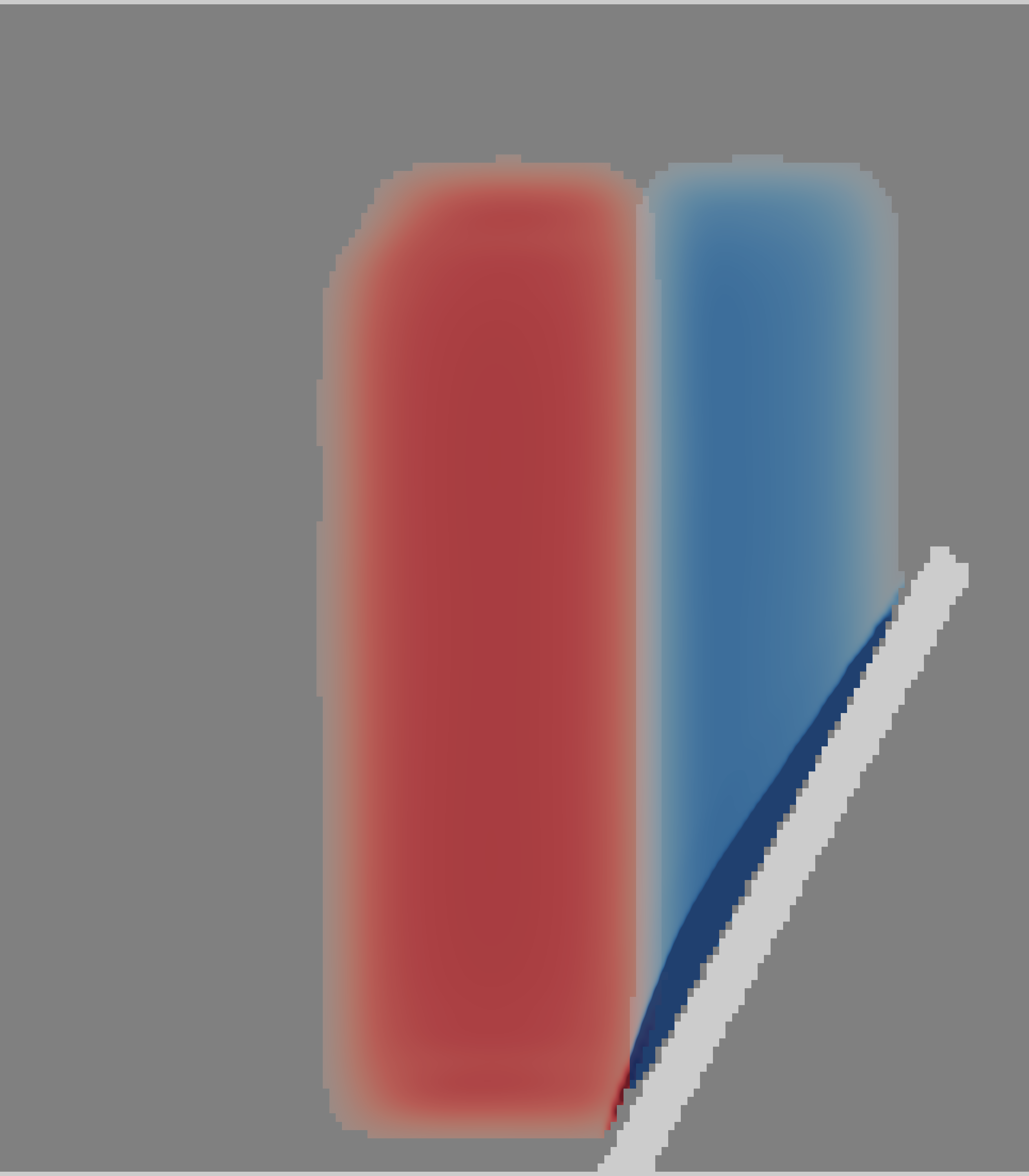}}
  \caption{time $t_1$}
  \label{fig:1}
\end{subfigure}\hfil % <-- added
\begin{subfigure}{0.25\textwidth}
  %\scalebox{1}[-1]{\includegraphics[angle=270,width=\linewidth]{img/big_small_200.jpg}}
  \reflectbox{\includegraphics[angle=90,width=\linewidth]{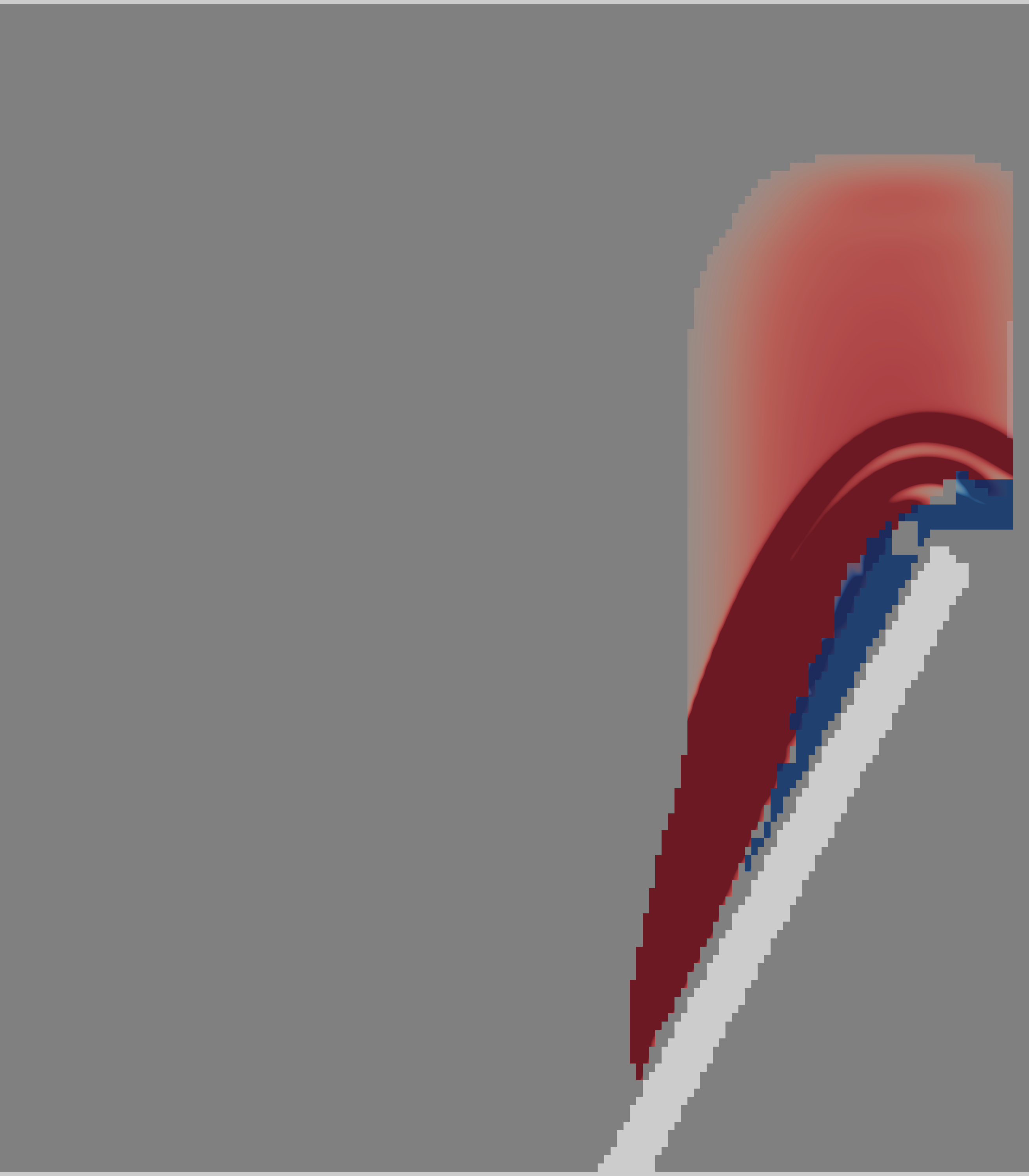}}
  \caption{time $t_2$}
  \label{fig:2}
\end{subfigure}\hfil % <-- added
\begin{subfigure}{0.25\textwidth}
  %\scalebox{1}[-1]{\includegraphics[angle=270,width=\linewidth]{img/big_small_300.jpg}}
  \reflectbox{\includegraphics[angle=90,width=\linewidth]{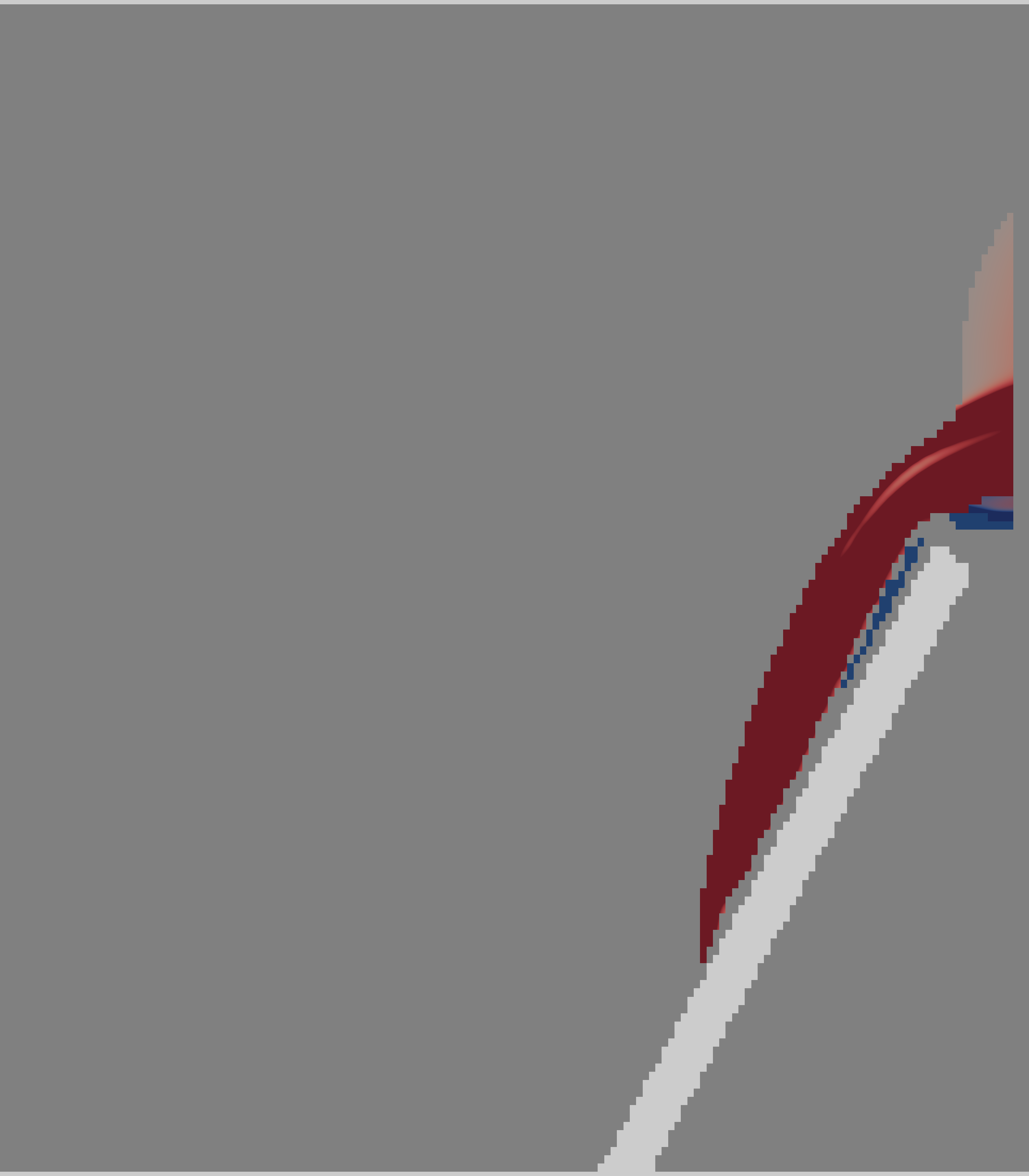}}
  \caption{time $t_3$}
  \label{fig:3}
\end{subfigure}

\medskip

\begin{subfigure}{0.25\textwidth}
  \includegraphics[width=\linewidth]{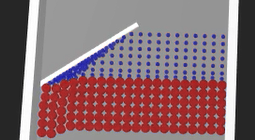}
  %\caption{image4}
  \label{fig:4}
\end{subfigure}\hfil % <-- added
\begin{subfigure}{0.25\textwidth}
  \includegraphics[width=\linewidth]{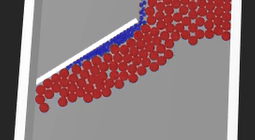}
  %\caption{image5}
  \label{fig:5}
\end{subfigure}\hfil % <-- added
\begin{subfigure}{0.25\textwidth}
  \includegraphics[width=\linewidth]{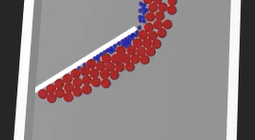}
  %\caption{image6}
  \label{fig:6}
\end{subfigure}
%\caption{Fasi del processo di impregnazione}
\caption{The smaller particles (blue) start first and are pushed to the edge of the obstacle by the coming larger particles (red).}
\label{Figure2}
\end{figure}

In the second case we swap the classes positions. The macroscopic simulation shows that the incoming smaller particles penetrate the mass of the downstream larger particles until they reach the diverter, see Figure~\ref{Figure3}. The different particle classes mix in the outflow with the smaller particles (blue) overtaking some of the larger ones. This effect can also be seen in the outflow diagram depicted in Figure~\ref{Figure5}. The same effect can be observed in the microscopic simulation, see Figure~\ref{Figure3}, bottom.

\begin{figure}[htb]
    \centering % <-- added
\begin{subfigure}{0.25\textwidth}
  \reflectbox{\includegraphics[angle=90,width=\linewidth]{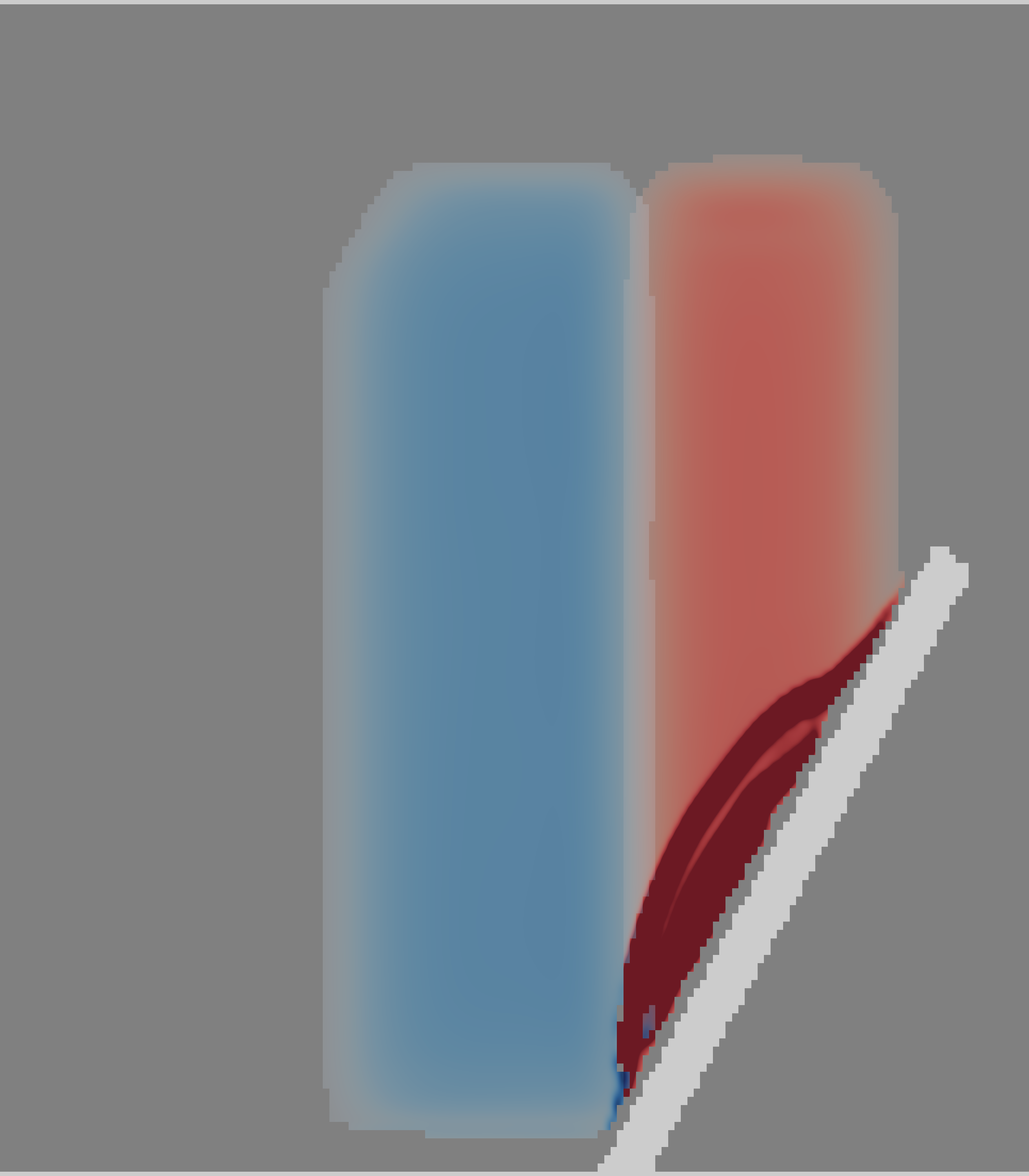}}
  %\caption{image1}
  \caption{time $t_1$}
  \label{fig:1}
\end{subfigure}\hfil % <-- added
\begin{subfigure}{0.25\textwidth}
  \reflectbox{\includegraphics[angle=90,width=\linewidth]{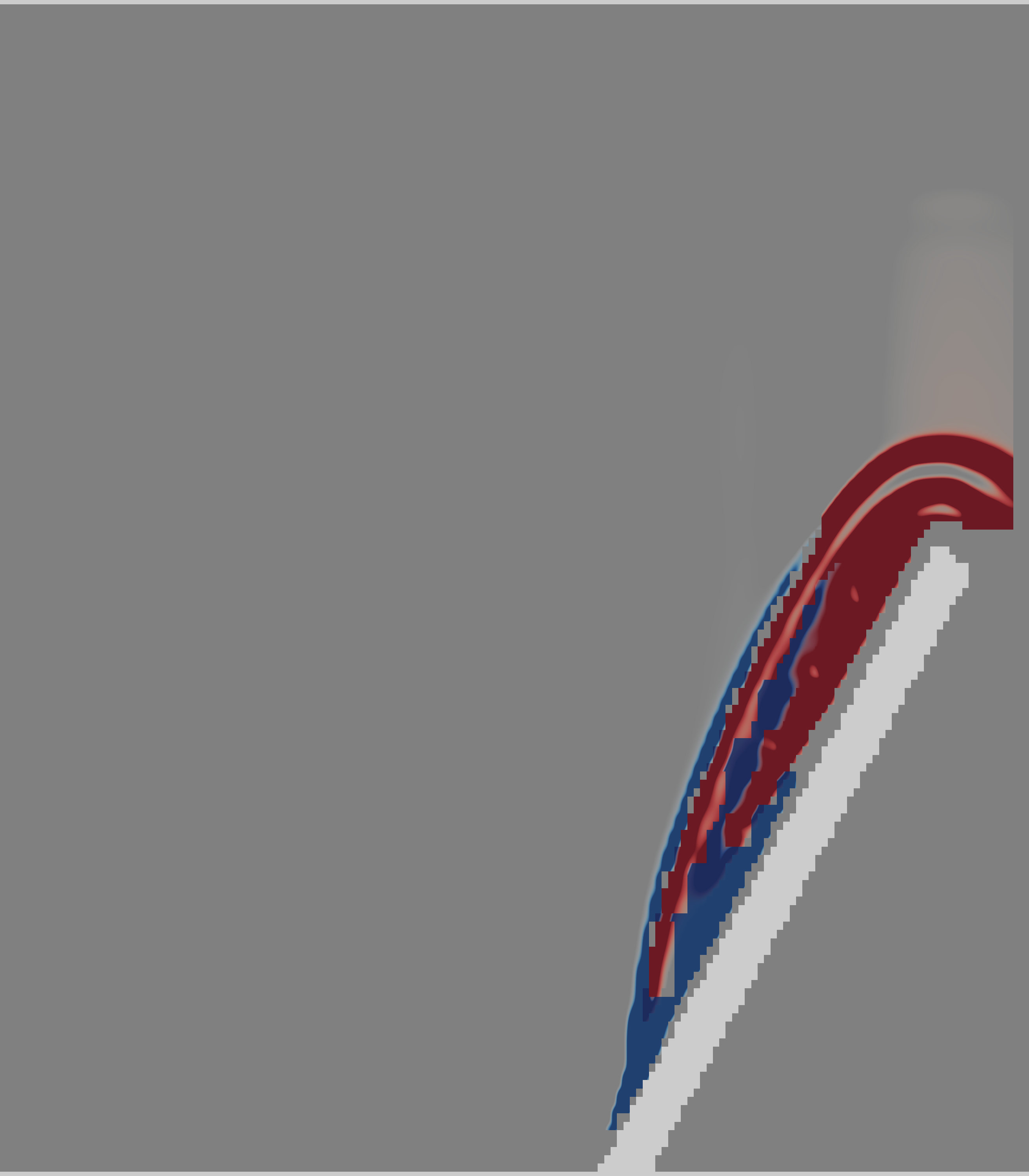}}
  %\caption{image2}
  \caption{time $t_2$}
  \label{fig:2}
\end{subfigure}\hfil % <-- added
\begin{subfigure}{0.25\textwidth}
  \reflectbox{\includegraphics[angle=90,width=\linewidth]{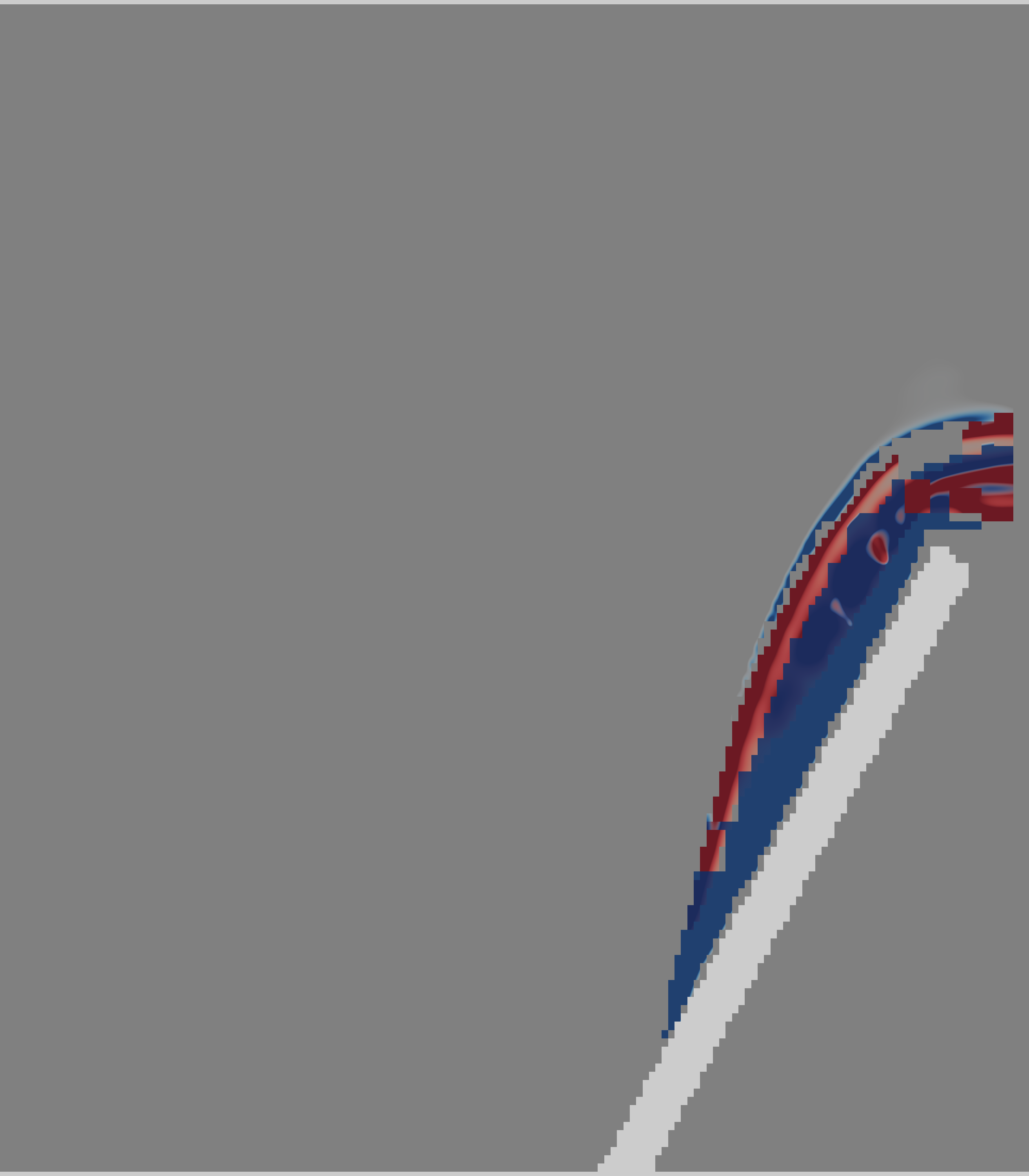}}
  %\caption{image3}
  \caption{time $t_3$}
  \label{fig:3}
\end{subfigure}

\medskip

\begin{subfigure}{0.25\textwidth}
  \includegraphics[width=\linewidth]{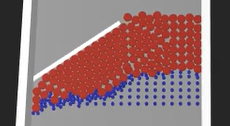}
  %\caption{image4}
  \label{fig:4}
\end{subfigure}\hfil % <-- added
\begin{subfigure}{0.25\textwidth}
  \includegraphics[width=\linewidth]{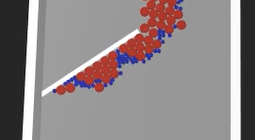}
  %\caption{image5}
  \label{fig:5}
\end{subfigure}\hfil % <-- added
\begin{subfigure}{0.25\textwidth}
  \includegraphics[width=\linewidth]{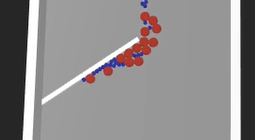}
  %\caption{image6}
  \label{fig:6}
\end{subfigure}
%\caption{Fasi del processo di impregnazione}
\caption{The larger particles (red) start first with the following smaller particles (blue) creeping through the mass of larger particles.} 
\label{Figure3}
\end{figure}

We also analyze the amount of mass that passes the obstacle, i.e., the outflow at the end of the conveyor belt for each particle class. The normalised time-dependent mass function describing the outflow to the solution of the conservation law is given by
\begin{align}
    U^c_{\rho}(t) &= \frac{1}{\int_{\Omega} \rho^c(\mathbf{x},0) \, \mathrm{d}\mathbf{x}} \, \int_{\Omega} \rho^c(\mathbf{x},t) \, \mathrm{d}\mathbf{x}.
\end{align}
where $\Omega$ is whole domain of the conveyor belt. The outflow curves for both test cases are shown in Figure~\ref{Figure4} and Figure~\ref{Figure5}, respectively.

\begin{figure}
    \begin{subfigure}{0.5\textwidth}
    \includegraphics[width=\linewidth]{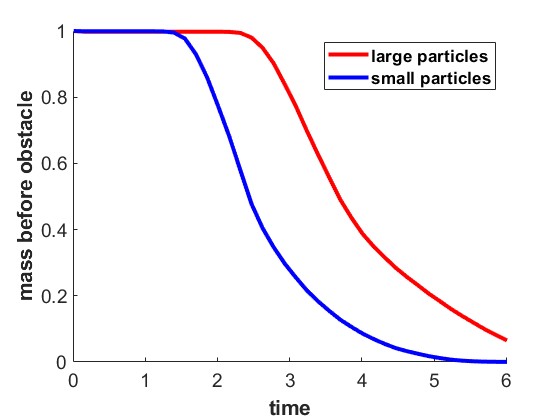}
    \caption{Small particles in front}
    \label{Figure4}
    \end{subfigure}\hfil % <-- added
    \begin{subfigure}{0.5\textwidth}
    \includegraphics[width=\linewidth]{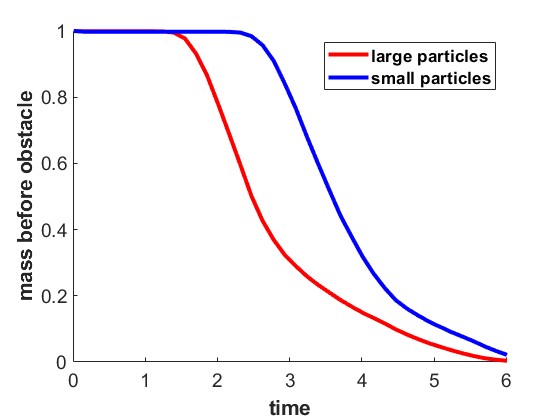}
    \caption{Large particles in front}
    \label{Figure5}
    \end{subfigure}\hfil % <-- added
    \caption{Outflow diagram for both test cases. By penetrating the mass of large particles, the mass of smaller particles catches up with the mass of the larger particles even if the large particles start first.}
\end{figure}

\section{Conclusion}

In this paper, we have proved the well-posedness of a nonlocal model describing heterogeneous flow on conveyor belts using the Roe scheme. Based on arguments borrowed from~\cite{Rossietal}, the results are generalized to the system case. The key challenge lies in the treatment of the discontinuity that occurs in the flux function. A regularized version of the Heaviside function $H$ is employed to establish BV bounds for the Roe-scheme. To extend these results to the system case, we consider a modified model incorporating an additional convolution in $H$. This approach enables us to derive estimates analogous to those obtained for the scalar model.  The numerical simulations show that the model captures effects for the mixing of different particle sizes, which can be observed in the physical simulations.

\section{Acknowledgements}

The project is funded by the Deutsche Forschungsgemeinschaft (DFG, German Research Foundation) - under the grant 526006304.

  \begin{appendices}
    \section{Technical Lemma}
    \label{sec:teclem}
    \begin{lemma}
      Let $\eta_c \in (\C3 \cap \W3\infty) (\reali^2; \reali)$ for all $c\in\{1,\dots,N\}$. Then,
      for $n=0, \ldots, N_T$, for $i, j \in \interi$, the following
      estimates hold:
      \begin{align}
        \label{eq:Jinf}
        \norma{J^{c,n}_\ell}_{\L\infty} \leq \
        & \epsilon_c \quad \mbox{ for } \ell=1,2,
        \\
        \label{eq:Jx}
        \modulo{J^{c,n}_1 (x_{i+1/2,j}) - J_1^{c,n} (x_{i-1/2,j})}
        \leq \
        & \left(2 \norma{\nabla^2 \eta_c}_{\L\infty}+ L_H\,\norma{\nabla \Tilde{\eta}_c}_{\L\infty}\right) \epsilon_c \, \dx \,   \norma{r_\Omega^{n}}_{\L1},
        \\
        \label{eq:J1y}
        \modulo{J_1^{c,n} (x_{i+1/2,j}) - J_1^{c,n}(x_{i+1/2,j+1})}
        \leq \
        & 
        \left(2 \norma{\nabla^2 \eta_c}_{\L\infty}+ L_H\,\norma{\nabla \Tilde{\eta}_c}_{\L\infty}\right) \epsilon_c \, \dy \,   \norma{r_\Omega^{n}}_{\L1},
        \\
        \nonumber
        \modulo{J^{c,n}_2 (x_{i,j+1/2}) - J_2^{c,n} (x_{i,j-1/2})}
        \leq \
        & \left(2 \norma{\nabla^2 \eta_c}_{\L\infty}+ L_H\,\norma{\nabla \Tilde{\eta}_c}_{\L\infty}\right) \epsilon_c \, \dy \,   \norma{r_\Omega^{n}}_{\L1},
        \\
        \nonumber
        \modulo{J^{c,n}_2 (x_{i+1,j+1/2}) - J_2^{c,n} (x_{i,j+1/2})}
        \leq \
        & \left(2 \norma{\nabla^2 \eta_c}_{\L\infty}+ L_H\,\norma{\nabla \Tilde{\eta}_c}_{\L\infty}\right) \epsilon_c \, \dx \,   \norma{r_\Omega^{n}}_{\L1},
        \end{align}
        and 
        \begin{align}
        \label{eq:Jtripla}
        \modulo{J^{c,n}_1 (x_{i+3/2,j}) -2 \,J^{c,n}_1 (x_{i+1/2,j}) - J_1^{c,n} (x_{i-1/2,j})}
        \leq \
        &   2 \,\epsilon_c \,  (\dx)^2 \left(
          c^c_1  \norma{r_\Omega^{n}}_{\L1} + c^c_2 \norma{r_\Omega^{n}}^2_{\L1}
          \right),
        \\
        \nonumber
        \modulo{J^{c,n}_2 (x_{i,j+3/2}) -2 \,J^{c,n}_2 (x_{i,j+1/2}) - J_2^{c,n} (x_{i,j-1/2})}
        \leq \
        &  2\, \epsilon_c \,  (\dy)^2 \left(
          c^c_1  \norma{r_\Omega^{n}}_{\L1} + c^c_2 \norma{r_\Omega^{n}}^2_{\L1}
          \right),
        \\
        \label{eq:Jtriplabis}
        \left|J_1^{c,n} (x_{i+1/2,j}) \!-\! J_1^{c,n}(x_{i+1/2,j+1})\! -\! J_1^{c,n} (x_{i-1/2,j}) \!-\!
        J_1^{c,n} \right.\! &\! \left.(x_{i-1/2,j+1})\right|
                          \leq \ 2 \, \epsilon_c \,  \dx \, \dy  \, C_c ,
        \\
        \nonumber
        \left|J_2^{c,n} (x_{i,j+1/2}) \!-\! J_2^{c,n}(x_{i+1,j+1/2})\! -\! J_2^{c,n} (x_{i,j-1/2}) \!-\!
        J_2^{c,n} \right. \! &\! \left.(x_{i+1,j-1/2})\right|
                           \leq \ 2 \, \epsilon_c \,  \dx \, \dy  \, C_c ,
      \end{align}
      where we set $r_\Omega^n= \sum_{k=1}^{N+1}\alpha_k\,\rho_\Omega^{k,n}$ and
      \begin{align}
        \label{eq:c12}
        & C_c = 
          c^c_1 \norma{r_\Omega^{n}}_{\L1} + c^c_2 \norma{r_\Omega^{n}}^2_{\L1}, 
        \\
        \nonumber
        &
          c^c_1 = \
         2 \, \norma{\nabla^3 \eta_c}_{\L\infty},
         \\
         \nonumber
         &
          c^c_2 = \
         3 \, \norma{\nabla^2 \eta_c}^2_{\L\infty} 
         + 2\,L_H\,\norma{\nabla\Tilde{\eta}_c}_{\L\infty}\norma{\nabla^2\eta_c}_{\L\infty}
      \end{align}
    \end{lemma}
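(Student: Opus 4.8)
\smallskip\noindent\emph{Proof plan.}
The plan is to write, for $\ell\in\{1,2\}$,
\[
  J^{c,n}_\ell(x,y) = -\epsilon_c\, H\bigl(B(x,y)\bigr)\, g_\ell\bigl(A(x,y)\bigr),
  \qquad g_\ell(p) = \frac{p_\ell}{\sqrt{1+\norma{p}^2}},
\]
where $A=(A_1,A_2)$ with $A_\ell=\partial_\ell\eta_c * r_\Omega$ and $B=\tilde\eta * r - r_{max}$ are understood through the quadrature~\eqref{eq:conv}, and to separate the smooth \emph{outer} maps $H,g_\ell$ from the \emph{inner} discrete convolutions $A,B$. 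Three elementary facts are used throughout: by~\ref{H} one has $0\le H\le 1$ and $\modulo{H(a)-H(b)}\le L_H\modulo{a-b}$; the field $p\mapsto p/\sqrt{1+\norma{p}^2}$ is bounded by $1$, $1$-Lipschitz, and has uniformly bounded second derivative, so $\modulo{g_\ell}\le1$, $\norma{Dg_\ell}_{\L\infty}\le1$, $\norma{D^2g_\ell}_{\L\infty}\le\const$. Since $\modulo{g_\ell(A)}\le1$ and $\modulo{H(B)}\le1$, estimate~\eqref{eq:Jinf} is immediate.

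The core computation is a set of finite-difference bounds on $A$ and $B$ alone. Because~\eqref{eq:conv} expresses $A_\ell$ (resp.\ $B$) as a sum of translates of the kernel $\partial_\ell\eta_c$ (resp.\ $\tilde\eta$) weighted by $\dx\,\dy\,(\sum_k\alpha_k\rho^k_{\Omega,h,\ell})$, whose absolute values sum to at most $\norma{r_\Omega^n}_{\L1}$, any $m$-th order finite difference in one coordinate passes through the sum and acts only on the smooth kernel. An $m$-th order mean value argument then bounds a one-directional $m$-th difference of $A_\ell$ by $(\dx)^m\norma{\nabla^{m+1}\eta_c}_{\L\infty}\norma{r_\Omega^n}_{\L1}$ and of $B$ by $(\dx)^m\norma{\nabla^{m}\tilde\eta}_{\L\infty}\norma{r_\Omega^n}_{\L1}$ (and likewise with $\dy$). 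In particular first differences of $A$ scale like $\dx\,\norma{\nabla^2\eta_c}_{\L\infty}\norma{r_\Omega^n}_{\L1}$ and second differences like $(\dx)^2\norma{\nabla^3\eta_c}_{\L\infty}\norma{r_\Omega^n}_{\L1}$, while first differences of $B$ scale like $\dx\,\norma{\nabla\tilde\eta}_{\L\infty}\norma{r_\Omega^n}_{\L1}$.

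For the first-order estimates~\eqref{eq:Jx}--\eqref{eq:J1y} I would apply the discrete Leibniz rule $\Delta(H g_\ell)=(\Delta H)\,g_\ell + H\,(\Delta g_\ell)$, bound the first summand by $L_H$ times a first difference of $B$ and the second by $\norma{Dg_\ell}_{\L\infty}$ times a first difference of $A$, summing the two components of $A$ to produce the coefficient $2\norma{\nabla^2\eta_c}_{\L\infty}$. Collecting the two contributions and the prefactor $\epsilon_c$ gives exactly~\eqref{eq:Jx}, and the remaining first-difference estimates are identical after swapping the differenced coordinate and the corresponding mesh size.

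The second-order and mixed estimates~\eqref{eq:Jtripla}--\eqref{eq:Jtriplabis} are the crux. Here I would use the second-difference product identity
\[
  \Delta^2(H g_\ell)= g_\ell\,\Delta^2 H + H\,\Delta^2 g_\ell + \delta^+H\,\delta^+g_\ell + \delta^-H\,\delta^-g_\ell ,
\]
at the central node, with $\delta^\pm$ the forward/backward first differences. The two cross terms, each a first difference of $H\circ B$ (of size $\lesssim L_H\,\dx\,\norma{\nabla\tilde\eta}_{\L\infty}\norma{r_\Omega^n}_{\L1}$) times a first difference of $g_\ell\circ A$ (of size $\lesssim \dx\,\norma{\nabla^2\eta_c}_{\L\infty}\norma{r_\Omega^n}_{\L1}$), yield the $L_H\,\norma{\nabla\tilde\eta}_{\L\infty}\norma{\nabla^2\eta_c}_{\L\infty}$ contribution to $c^c_2$ in~\eqref{eq:c12}. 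The term $H\,\Delta^2 g_\ell$, expanded by a second-order mean value argument on $g_\ell$, splits into $Dg_\ell\cdot\Delta^2 A$, producing the $\norma{\nabla^3\eta_c}_{\L\infty}$ coefficient $c^c_1$, and a $D^2g_\ell$ remainder quadratic in the first differences of $A$, producing the $\norma{\nabla^2\eta_c}^2_{\L\infty}$ part of $c^c_2$. The remaining term $g_\ell\,\Delta^2 H$ is the delicate one: a crude bracket-by-bracket bound only gives $O(\dx)$, so to recover the $O((\dx)^2)$ scaling one must rewrite $\Delta^2(H\circ B)=H'(\xi)\,\Delta^2 B + \bigl(H'(\xi_+)-H'(\xi_-)\bigr)\,\delta^-B$ and control the second-order variation of $H\circ B$ through the smoothness of $H$ and of $\tilde\eta$. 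The mixed estimate~\eqref{eq:Jtriplabis} follows from the same identity with one difference taken in $x$ and one in $y$. I expect the combination of this $H\circ B$ second difference with the careful collection of all mollifier-derivative orders into $c^c_1,c^c_2$ to be the main obstacle; everything else reduces to the mean value bounds described above.
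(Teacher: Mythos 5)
Your handling of \eqref{eq:Jinf} and of the first-difference bounds \eqref{eq:Jx}--\eqref{eq:J1y} is correct and essentially equivalent to the paper's: the paper produces the factor $2\norma{\nabla^2\eta_c}_{\L\infty}$ by adding and subtracting so that one contribution comes from differencing the numerator $\partial_1\eta_c*r_\Omega^n$ and one from differencing $1/D^c_\pm$ (mean value applied to $z\mapsto(1+\norma{z}^2)^{1/2}$, displays \eqref{eq:2}--\eqref{eq:2c}), while you get it from the two components of $A$ through the $1$-Lipschitz map $g_\ell$; the $L_H\norma{\nabla\Tilde{\eta}_c}_{\L\infty}$ term arises identically in both arguments, and the constants agree. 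Your discrete Leibniz identity for second differences is also algebraically correct, and three of its four terms ($H\,\Delta^2 g_\ell$ and the two cross terms $\delta^\pm H\,\delta^\pm g_\ell$) reproduce the $c^c_1$ and $c^c_2$ contributions of \eqref{eq:c12} in the same way the paper's groupings \eqref{eq:7}, \eqref{eq:4b}, \eqref{eq:4a}, \eqref{eq:4c} do.

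The genuine gap is exactly the term you flag as delicate, $g_\ell\,\Delta^2(H\circ B)$, and your proposed repair does not work with the constants of the lemma. Writing $\Delta^2(H\circ B)=H'(\xi)\,\Delta^2 B+\bigl(H'(\xi_+)-H'(\xi_-)\bigr)\,\delta^- B$ requires bounding $H'(\xi_+)-H'(\xi_-)$, i.e.\ a bound on $H''$ or at least a modulus of continuity of $H'$; but assumption~\ref{H} only provides $L_H=\norma{H'}_{\L\infty}$, and no such quantity appears in $c^c_1,c^c_2$. With $L_H$ alone the best available bound is $\modulo{\Delta^2(H\circ B)}\le 2L_H\,\dx\,\norma{\nabla\Tilde{\eta}_c}_{\L\infty}\norma{r^n_\Omega}_{\L1}$, which is $O(\dx)$, not $O((\dx)^2)$: a pure Lipschitz bound can never yield a second-order gain. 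The paper's proof is structured differently at this point: it never isolates a second difference of $H$, but instead pairs each increment $H_{i+3/2}-H_{i+1/2}$, $H_{i-1/2}-H_{i+1/2}$ with a convolution factor (displays \eqref{eq:4d} and \eqref{eq:4c}), so that $L_H$ always multiplies a product of two first-order increments. You should be aware, however, that an honest regrouping of the paper's \eqref{eq:4d} leaves the remainder $\bigl(H_{i+3/2}-2H_{i+1/2}+H_{i-1/2}\bigr)\,(\partial_1\eta_c*r^n_\Omega)(x_{i-1/2,j})/D^c_1$ --- precisely your problematic term --- and the paper absorbs it only through the step in which the \emph{sum} of the two normalized convolutions is rewritten as $\int_{x_{i-1/2}}^{x_{i+3/2}}\partial^2_{11}\eta_c$, which is in fact their \emph{difference}. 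So your decomposition exposes a real difficulty rather than resolving it: to close the argument (yours or the paper's) one needs either $\norma{H''}_{\L\infty}$ among the constants, or a cancellation mechanism that neither your plan nor the displayed computation in the appendix supplies.
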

    \begin{proof}
      The proof of~\eqref{eq:Jinf} is immediate.

      Pass now to~\eqref{eq:Jx}. For the sake of simplicity, introduce
      the following notation:
      \begin{align*}
      \begin{split}
        D^c_+ =\
        & \sqrt{1 + \norma{\left(\nabla \eta_c * r_{\Omega}^n\right) (x_{i+1/2}, y_j)}^2},\\
         D^c_- =\
        & \sqrt{1 + \norma{\left(\nabla \eta_c * r_{\Omega}^n\right) (x_{i-1/2}, y_j)}^2}.
        \end{split}
      \end{align*}
      Hence, setting $r_{\Omega,h,\ell}^n = \sum_{k=1}^{N+1}\alpha_k\,\rho^{k,n}_{\Omega,h,\ell}$
      \begin{align}
        \label{eq:2original}
        & \modulo{J^{c,n}_1 (x_{i+1/2}, y_j) - J_1^{c,n}
          (x_{i-1/2},y_j)}
        \\
        \nonumber = \
        & \epsilon_c \,\Biggl| H_{i+1/2} \frac{\dx \, \dy}{D^c_+} \,
          \sum_{h, \ell \in \interi} r_{\Omega,h,\ell}^n \, \partial_1
          \eta_c(x_{i+1/2-k},y_{j-\ell}) 
          \\
        &  -
         H_{i-1/2}
         \frac{\dx \, \dy}{D^c_-} \,
          \sum_{h, \ell \in \interi} r_{\Omega,h,\ell}^n \, \partial_1
          \eta_c(x_{i-1/2-h},y_{j-\ell})\Biggr|
        \\ \label{eq:2} \leq \
        & \epsilon_c\,
          \modulo{ H_{i+1/2}} \frac{\dx \, \dy}{D^c_+} \, \sum_{h, \ell \in \interi}
          r_{\Omega,h,\ell}^n \, \left(
          \partial_1 \eta_c(x_{i+1/2-h},y_{j-\ell}) -
          \partial_1 \eta_c(x_{i-1/2-h},y_{j-\ell}) \right) 
        \\\label{eq:2b}
        & + \epsilon_c \, H_{i+1/2}\, \dx \, \dy \,
          \modulo{\frac{1}{D^c_+} - \frac{1}{D^c_-}} \, \sum_{h, \ell \in
          \interi} \modulo{r_{\Omega,h,\ell}^n} \modulo{\partial_1
          \eta_c(x_{i-1/2-h},y_{j-\ell})}
        \\\label{eq:2c}
        & + \epsilon_c \, \left(H_{i+1/2}-H_{i-1/2}\right)\, \dx \, \dy \,
          \modulo{ \frac{1}{D^c_-}} \, \sum_{h, \ell \in
          \interi} \modulo{r_{\Omega,h,\ell}^n} \modulo{\partial_1
          \eta_c(x_{i-1/2-h},y_{j-\ell})}.
      \end{align}
      where we define for $\mu\in\{i-1/2,i+1/2\}$
      \begin{align*}
          H_\mu = H\left(\dx\dy \sum_{h,\ell\in\interi}r_{\Omega,h,\ell}^n \,\Tilde{\eta}_c(x_{\mu-k},y_{j-\ell}) - r_{max}\right)
      \end{align*}
      Consider~\eqref{eq:2}: since $D^c_+ \geq 1$, $H_\mu\leq1$ and
      \begin{displaymath}
        \modulo{ \partial_1 \eta_c(x_{i+1/2-h},y_{j-\ell})- \partial_1 \eta_c(x_{i-1/2-h},y_{j-\ell})}
        \leq
        \int_{x_{i-1/2-h}}^{x_{i+1/2-h}}\modulo{\partial_{11}^2 \eta_c (x,y_{j-\ell})} \d{x},
      \end{displaymath}
      we obtain    
      \begin{equation}
        \label{eq:2ok}
        % \epsilon \modulo{
        % \frac{\dx \, \dy}{D_+} \sum_{k, \ell \in \interi}
        % \rh{k,\ell} \, \left(
        %   \partial_1 \eta(x_{i+1/2-k},y_{j-\ell})
        %   -
        %   \partial_1 \eta(x_{i-1/2-k},y_{j-\ell})
        % \right)}
        [\eqref{eq:2}]
        \leq
        \epsilon_c \, \dx \, \norma{\partial_{11}^2 \eta_c}_{\L\infty} \norma{r_\Omega^{n}}_{\L1}.
      \end{equation}
      On the other hand, to estimate~\eqref{eq:2b}, compute
      \begin{displaymath}
        \modulo{\frac{1}{D^c_+} - \frac{1}{D^c_-}} = \frac{\modulo{D^c_+ - D^c_-}}{D^c_+ \, D^c_-}.
      \end{displaymath}
      Introduce $a_c(x) = \nabla \eta_c*r_\Omega^{n} (x)$ and
      $b(z) = (1+\norma{z}^2)^{1/2}$, for $z \in \reali^2$. In
      particular compute
      $b' (z) = \dfrac{\norma{z}}{(1+\norma{z}^2)^{1/2}}$ and observe
      that $\modulo{b'(z)}\leq 1$. Then
      \begin{align}
        \nonumber
        \modulo{D^c_+ - D^c_-} = \
        & \modulo{b(a_c(x_{i+1/2})) - b(a_c(x_{i-1/2}))}
          = \modulo{b'(a_c(\tilde x_i)) \, a_c'(\tilde x_i) \, (x_{i+1/2} - x_{i-1/2})}
        \\
        \nonumber
        = \
        & \modulo{\frac{a_c (\tilde x_i)}{(1+a_c (\tilde x_i)^2)^{1/2}} \,
          \left(\partial_x \nabla \eta_c * r_\Omega^{n}\right)(\tilde x_i) \, \dx}
        \\
        \label{eq:5}
        \leq \
        &
          \dx \, \norma{r_\Omega^{n}}_{\L1} \norma{\nabla^2 \eta_c}_{\L\infty}.
      \end{align}
      Therefore,
      \begin{align}
        \label{eq:2bok}
        & \epsilon_c \,H_{i+1/2}\, \dx \, \dy \,
        \modulo{\frac{1}{D^c_+} - \frac{1}{D^c_-}} \, \sum_{h, \ell \in
          \interi} \modulo{r_{\Omega,h,\ell}^{n}} \modulo{\partial_1
          \eta_c(x_{i-1/2-h},y_{j-\ell})} 
          \\
        \nonumber \leq
        & \epsilon_c \, \dx \, \norma{\nabla^2 \eta_c}_{\L\infty} \norma{r_\Omega^{n}}_{\L1}.
      \end{align}
     Considering~\eqref{eq:2c}, by assumption \ref{H} and \ref{eq:2ok} we get
      \begin{align*}
          H_{i+1/2}-H_{i-1/2} & \leq L_H\!\! \left(\dx\dy\sum_{h,\ell\in\interi}r_{\Omega,h,\ell}^{n}\left(\Tilde{\eta}_c(x_{i+1/2-h},y_{j-\ell})-\Tilde{\eta}_c(x_{i-1/2-h},y_{j-\ell})\right)\right)
          \\
          &\leq L_H \dx \, \norma{\nabla \Tilde{\eta}_c}_{\L\infty} \norma{r_\Omega^{n}}_{\L1}
      \end{align*}
      Since $\dfrac{
     (\partial_1\eta_c * r_\Omega^{n}) (x_{i-1/2,j})}{D^c_{-}} \leq
   1$ we obtain therefore
      \begin{align}
      \label{eq:2cok}
         \nonumber & \epsilon_c \, \left(H_{i+1/2}-H_{i-1/2}\right)\, \dx \, \dy \,
          \modulo{ \frac{1}{D^c_-}} \, \sum_{h, \ell \in
          \interi} \modulo{r^{n}_{\Omega,h,\ell}} \modulo{\partial_1
          \eta_c(x_{i-1/2-h},y_{j-\ell})}
          \\ 
          \leq & \epsilon_c\,L_H\,\dx\,\norma{\nabla \Tilde{\eta}_c}_{\L\infty} \norma{r_\Omega^{n}}_{\L1}
      \end{align}

 \noindent Inserting~\eqref{eq:2ok},~\eqref{eq:2bok} and~\eqref{eq:2cok} into the estimate
   of~\eqref{eq:2original} yields the desired result.

\smallskip
   Consider now~\eqref{eq:Jtripla}. Introduce the following notation:
   for $\mu \in \left\{-1 ;\, 1;\, 3 \right\}$ set
   \begin{displaymath}
     D^c_{\mu} = \sqrt{1 + \norma{\left(\nabla\eta_c * r_\Omega^{n}\right) (x_{i+\mu/2}, y_j)}^2}.
   \end{displaymath}
   Thus
   \begin{align*}
     & J_1^{c,n} (\x{i+3/2,j}) - 2 \, J_1^{c,n} (\x{i+1/2,j}) + J_1^{c,n} (\x{i-1/2,j})
     \\
     = 
     & - \epsilon_c \left(\frac{H_{i+3/2}(\partial_1\eta_c * r_\Omega^{n}) (\x{i+3/2,j})}{D^c_3}
       - 2 \, \frac{H_{i+1/2}(\partial_1\eta_c *  r_\Omega^{n}) (\x{i+1/2,j})}{D^c_1}\right.
    \\
    & \quad
    \left.
       + \frac{H_{i-1/2}(\partial_1\eta_c * r_\Omega^{n}) (\x{i-1/2,j})}{D^c_{-1}} \pm \frac{H_{i+3/2}(\partial_1\eta_c * r_\Omega^{n}) (x_{i+3/2,j})}{D^c_1}\right.
     \\
     & \quad
     \left.
       \pm \frac{H_{i-1/2}(\partial_1\eta_c * r_\Omega^{n}) (x_{i-1/2,j})}{D^c_1}
        \pm \frac{H_{i+1/2}(\partial_1\eta_c * r_\Omega^{n}) (x_{i+3/2,j})}{D^c_1}\right.
     \\
     & \quad
     \left. 
     \pm \frac{H_{i+1/2}(\partial_1\eta_c * r_\Omega^{n}) (x_{i-1/2,j})}{D^c_1}
     \right)
     \\
     = 
     & - \epsilon_c \left(
       \left(\frac{1}{D^c_3} - \frac{1}{D^c_1}\right) H_{i+3/2}(\partial_1\eta_c * r_\Omega^{n}) (x_{i+3/2,j})\right.
       \\
    & \quad 
        + \left(H_{i+3/2}-H_{i+1/2}\right) \frac{1}{D_1^c}
        ( \partial_1\eta_c * r_\Omega^{n}) (x_{i+3/2,j})         
    \\
    & \quad
       \left.+ \frac{1}{D^c_1}
       \left(H_{i+1/2}(\partial_1\eta_c * r_\Omega^{n}) (x_{i+3/2,j})
         - H_{i+1/2}(\partial_1\eta_c * r_\Omega^{n}) (x_{i+1/2,j})\right)\right.
     \\
     & \quad \left.
         + \frac{1}{D^c_1}\!
       \left(H_{i+1/2}(\partial_1\eta_c * r_\Omega^{n}) (x_{i-1/2,j})
         - H_{i+1/2}(\partial_1\eta_c * r_\Omega^{n}) (x_{i+1/2,j})\right)\right.
       \\
    & \quad  +
        \left(H_{i-1/2}-H_{i+1/2}\right) \frac{1}{D_1^c}
        (\partial_1\eta_c * r_\Omega^{n}) (x_{i-1/2,j})   
       \\
    & \quad \left.
       +
       \left(\frac{1}{D^c_{-1}} - \frac{1}{D^c_1}\right) H_{i-1/2} (\partial_1\eta_c * r_\Omega^{n}) (x_{i-1/2,j})
     \right).
   \end{align*}
   Consider the terms separately, forgetting for a moment the
   $\epsilon_c$ in front of everything. Focus first on the terms with
   common denominator $D^c_1$ and $H_{i+1/2}$:
   \begin{align}
     \nonumber
     & \frac{ H_{i+1/2}}{D^c_1}    \left((\partial_1\eta_c * r_\Omega^{n}) (x_{i+3/2,j})\!
       - (\partial_1\eta_c * r_\Omega^{n}) (x_{i+1/2,j})
       \right.
     \\  \nonumber
    & \quad 
    \left. + (\partial_1\eta_c * r_\Omega^{n}) (x_{i-1/2,j})
       - (\partial_1\eta_c * r_\Omega^{n}) (x_{i+1/2,j})\right)
     \\  \nonumber
     = \
     & H_{i+1/2}\frac{\dx \, \dy}{D^c_1} \sum_{h, \ell \in \interi} r_{\Omega,h, \ell}^n \,
     \left(
       \partial_1 \eta_c (x_{i+3/2 -h}, y_{j-\ell})
       -  \partial_1 \eta_c (x_{i+1/2 -h}, y_{j-\ell})
     \right.
     \\  \nonumber
     & \qquad\qquad\qquad\qquad\quad
     \left.
       +  \partial_1 \eta_c (x_{i-1/2 -h}, y_{j-\ell})
       -  \partial_1 \eta_c (x_{i+1/2 -h}, y_{j-\ell})
     \right)
     \\  \nonumber
     = \ 
     & H_{i+1/2}
     \frac{\dx \, \dy}{D^c_1} \sum_{h, \ell \in \interi} r_{\Omega,h, \ell}^n \, \dx \,
     \left( \partial_{11}^2 \eta_c(\hat x_{i+1-h}, y_{j-\ell})
       -  \partial_{11}^2 \eta_c(\hat x_{i-h}, y_{j-\ell}) \right)
     \\  \nonumber
     = \
     & H_{i+1/2}     \frac{\dx \, \dy}{D^c_1} \sum_{h, \ell \in \interi} r_{\Omega,h, \ell}^n \, \dx \,
     \int_{\hat x_{i-h}}^{\hat x_{i+1-h}} \partial_{111}^3 \eta_c (x, y_{j-\ell}) \d{x}
     \\ \label{eq:7}
     \leq \
     & 2 \, (\dx)^2 \, \norma{\partial_{111}^3 \eta_c}_{\L\infty} \norma{r_\Omega^{n}}_{\L1},
   \end{align}
   with $\hat x_{i-h} \in \, ]x_{i-1/2-h}, x_{i+1/2-h}[$. 
   Next, we obtain, since $\frac{1}{D_{1}^c}\leq 1 $ and for $r^n_\Omega = \sum_{k=1}^{N+1}\alpha_k\rho_\Omega^{k,n}$ 
   %\\
   %$\dfrac{
   %  (\partial_1\eta_c * \left(\sum_{k=1}^{N+1}\alpha_k\,\rho_\Omega^{k,n}\right)) (x_{i+1/2,j})}{D^c_3} \leq
   %1$.   
   \begin{align}
   \label{eq:4d}
       \nonumber 
       & \left(H_{i+3/2}-H_{i+1/2}\right) \frac{1}{D_1^c}
         (\partial_1\eta_c * r_\Omega^{n}) (x_{i+3/2,j}) 
         + 
         \left(H_{i-1/2}-H_{i+1/2}\right) \frac{1}{D_1^c}
         (\partial_1\eta_c * r_\Omega^{n}) (x_{i-1/2,j})
        \\
       \nonumber 
       \leq 
       & L_H \left(\dx\dy\sum_{h,\ell}r_{\Omega,h, \ell}^n\left(\Tilde{\eta}_c(x_{i+3/2-h,j-\ell})- \Tilde{\eta}_c(x_{i+1/2-h,j-\ell})\right)
       \right) \frac{(\partial_1\eta_c * r_\Omega^{n}) (x_{i+3/2,j})}{D_1^c}
       \\
       \nonumber
       &+L_H\left(\dx\dy\sum_{h,\ell}r_{\Omega,h, \ell}^n\left(\Tilde{\eta}_c(x_{i-1/2-h,j-\ell})- \Tilde{\eta}_c(x_{i+1/2-h,j-\ell})\right)
       \right) \frac{(\partial_1\eta_c * r_\Omega^{n}) (x_{i-1/2,j})}{D_1^c}
       \\
       \nonumber
       \leq & L_H \,\dx \, \norma{\nabla \Tilde{\eta}_c}_{\L\infty}\norma{r_{\Omega}^{n}}_{\L1}
       \left(\frac{(\partial_1\eta_c * r_\Omega^{n}) (x_{i+3/2,j})}{D_1^c}
       +
       \frac{(\partial_1\eta_c * r_\Omega^{n}) (x_{i-1/2,j})}{D_1^c}\right)
       \\
       \nonumber
       = & L_H \,\dx \, \norma{\nabla \Tilde{\eta}_c}_{\L\infty}\norma{r_{\Omega}^{n}}_{\L1}\frac{1}{D_1^c}\left(\dx\dy\sum_{h,\ell}r^{n}_{\Omega,h,l}
       \int_{x_{i-1/2}}^{x_{i+3/2}}\partial^2_{11}\eta_c(x)dx
       \right)
       \\
       \leq & L_H \,2(\dx)^2 \, \norma{\nabla \Tilde{\eta}_c}_{\L\infty}\norma{\nabla^2\eta_c}_{\L\infty}\norma{r_{\Omega}^{n}}_{\L1}^2
       .
    \end{align}
    We are left with 
   \begin{align}
     \label{eq:4}
      &  H_{i+3/2}\left(\frac{1}{D^c_3} - \frac{1}{D^c_1}\right) \left(\partial_1\eta_c * r^n_\Omega\right) (x_{i+3/2,j})
     + 
      H_{i-1/2}
      \left(\frac{1}{D^c_{-1}} - \frac{1}{D^c_1}\right) \left(\partial_1\eta_c * r^n_\Omega\right) (x_{i-1/2,j}).
   \end{align}
   Add and subtract to~\eqref{eq:4}
   \begin{displaymath}
     H_{i+3/2}\left(\frac{1}{D^c_{-1}} - \frac{1}{D^c_1}\right) \left(\partial_1\eta_c * r^n_\Omega\right) (x_{i+3/2,j}).
   \end{displaymath}
   and 
   \begin{align*}
       H_{i-1/2}\left(\frac{1}{D^c_{-1}} - \frac{1}{D^c_1}\right) \left(\partial_1\eta_c * r^n_\Omega\right) (x_{i+3/2,j}).   
    \end{align*}
   Hence,
   \begin{align}
     \label{eq:4a}
    &  \left(\frac{1}{D^c_3} - 2 \, \frac{1}{D^c_1} + \frac{1}{D^c_{-1}} \right) H_{i+3/2}
     \left(\partial_1\eta_c * r^n_\Omega\right) (x_{i+3/2,j})
    \\
    \label{eq:4b}
    & \quad+ H_{i-1/2}
     \left(\frac{1}{D^c_{-1}} - \frac{1}{D^c_1}\right) \left(
       \left(\partial_1\eta_c * r^n_\Omega\right) (x_{i-1/2,j})
       - \left(\partial_1\eta_c * r^n_\Omega\right) (x_{i+3/2,j})
     \right).
     \\
     \label{eq:4c}
     & \quad +\left(H_{i-1/2}-H_{i+3/2}\right)
     \left(\frac{1}{D^c_{-1}} - \frac{1}{D^c_1}\right)
     \left(\partial_1\eta_c * r^n_\Omega\right) (x_{i+3/2,j})
   \end{align}
   Consider first~\eqref{eq:4b}: exploiting also~\eqref{eq:5}, we obtain
   \begin{align}
     \nonumber
     [\eqref{eq:4b}]= \
     & H_{i-1/2}
     \frac{D^c_1 - D^c_{-1}}{D^c_1 \, D^c_{-1}} \, \dx \, \dy
     \\
     \nonumber
     & \quad\quad
     \sum_{h, \ell \in \interi} r^{n}_{\Omega,h,\ell}\left(
       \partial_1 \eta_c(x_{i-1/2-h}, y_{j-\ell})
       -
       \partial_1 \eta_c(x_{i+3/2-h}, y_{j-\ell})
     \right)
     \\ \nonumber
     = \
     & H_{i-1/2}
     \frac{D^c_1 - D^c_{-1}}{D^c_1 \, D^c_{-1}} \, \dx \, \dy
     \sum_{h, \ell \in \interi} r^{n}_{\Omega,h,\ell}
     \int_{x_{i+3/2-h}}^{x_{i-1/2-h}} \partial_{11}^2 \eta_c(x,y_{j-\ell}) \d{x}
     \\ \label{eq:4bok}
     \leq \
     &
     2 \, (\dx)^2 \, \norma{\nabla^2 \eta_c}^2_{\L\infty} \norma{r_\Omega^{n}}^2_{\L1}.
   \end{align}
   As far as~\eqref{eq:4a} is concerned, focus on the terms in the brackets:
   \begin{align}
     \nonumber
     \frac{1}{D^c_3} - 2 \, \frac{1}{D^c_1} + \frac{1}{D^c_{-1}}
     = \
     & \frac{D^c_1 \, D^c_{-1} - 2 \, D^c_3 \, D^c_{-1} + D^c_3 \, D^c_1}{D^c_3 \, D^c_1 \, D^c_{-1}}
     \\ \nonumber
     = \
     & \frac{D^c_{-1}(D^c_1 -  D^c_3 ) - D^c_3 (D^c_{-1}- D^c_1) \pm D^c_3(D^c_1 -  D^c_3 )}
     {D^c_3 \, D^c_1 \, D^c_{-1}}
     \\  \label{eq:6}
     = \
     & \frac{(D^c_{-1} - D^c_3)(D^c_1-D^c_3)}{D^c_3 \, D^c_1 \, D^c_{-1}}
     - \frac{D^c_{-1} - 2 \, D^c_1 + D^c_3}{D^c_1 \, D^c_{-1}}.
   \end{align}
   Inserting the first addend of~\eqref{eq:6} back into~\eqref{eq:4a} yields
   \begin{equation}
     \label{eq:4a1ok}
     \frac{(D^c_{-1} - D^c_3)(D^c_1-D^c_3)}{D^c_3 \, D^c_1 \, D^c_{-1}}
     H_{i+3/2}
     \left(\partial_1\eta_c * r^n_\Omega\right) (x_{i+3/2,j})
     \leq
     2 \, (\dx)^2 \norma{\nabla^2 \eta_c}_{\L\infty}^2 \norma{r_\Omega^{n}}_{\L1}^2,
   \end{equation}
   where we exploit~\eqref{eq:5} twice and use the fact that $\dfrac{
     (\partial_1\eta_c * r_\Omega^{n}) (x_{i+3/2,j})}{D^c_3} \leq
   1$. Concerning the second addend of~\eqref{eq:6}, focus on its
   numerator: with the notation introduced before~\eqref{eq:5},
   \begin{align*}
     D^c_{-1}\! - 2 \, D^c_1 + D^c_3 = \
     & b\left(a_c(x_{i+3/2})\right) - 2 \, b\left(a_c(x_{i+1/2})\right)
     + b\left(a_c(x_{-1/2})\right)
     \\
     = \
     & b' \left(a_c(\check x_{i+1})\right) \, a_c' (\check x_{i+1}) (x_{i+3/2} - x_{i+1/2})
     -  b' \left(a_c(\check x_{i})\right) \, a_c' (\check x_{i}) (x_{i+1/2} - x_{i-/2})
     \\
     = \
     & \dx \left(
       b' \left(a_c(\check x_{i+1})\right) (\partial_1 \nabla \eta_c * r^n_\Omega) (\check x_{i+1})
       -  b' \left(a_c(\check x_{i})\right)  (\partial_1 \nabla \eta_c * r^n_\Omega) (\check x_{i})
     \right)
     \\
     & \pm \dx \,\, b'\!\left(a_c (\check x_i)\right)
     (\partial_1 \nabla \eta_c * r^n_\Omega) (\check x_{i+1})
     \\
     = \
     & \dx \, \left[
      b' \left(a_c(\check x_{i+1})\right) - b'\left(a_c (\check x_i)\right)
     \right] (\partial_1 \nabla \eta_c * r^n_\Omega) (\check x_{i+1})
     \\
     & + \dx \, \, b'\!\left(a_c (\check x_i)\right) \left[
        (\partial_1 \nabla \eta_c * r^n_\Omega) (\check x_{i+1})
        - (\partial_1 \nabla \eta_c * r^n_\Omega) (\check x_{i})
     \right]
     \\
     = \
     & \dx \, b''\!\left(a_c(\overline x_{i+1/2})\right) \, a_c' (\overline x_{i+1/2}) \,
     (\check x_{i+1} - \check x_{i}) \,
     (\partial_1 \nabla \eta_c * r^n_\Omega) (\check x_{i+1})
     \\
     & + \dx \, \,  b'\!\left(a_c (\check x_i)\right) \, \dx \, \dy
     \\
     & \quad\sum_{h, \ell \in \interi} r^{n}_{\Omega,h, \ell} \left(
     \partial_1 \nabla \eta_c(\check x_{i+1-h}, y_{j-\ell})
     -  \partial_1 \nabla \eta_c(\check x_{i-h}, y_{j-\ell})
     \right)
     \\
     =\
     & \dx \, b''\!\left(a_c(\overline x_{i+1/2})\right) \, a_c' (\overline x_{i+1/2}) \,
     (\check x_{i+1} - \check x_{i}) \,
     (\partial_1 \nabla \eta_c * r_\Omega^{n}) (\check x_{i+1})
     \\
     &  + \dx \, \,  b'\!\left(a_c (\check x_i)\right) \, \dx \, \dy
     \sum_{h, \ell \in \interi} r^{n}_{\Omega,h, \ell} \int_{\check x_{i-h}}^{\check x_{i+1-h}}
     \partial^2_{11}\nabla \eta_c (x, y_{j-\ell}) \d{x}
   \end{align*}
   where $\check x_i \in \, ]x_{i-1/2}, x_{i+1/2}[$ and $\overline
   x_{i+1/2} \in \, ]\check x_i, \check x_{i+1}[$. Now insert this
   estimate back into~\eqref{eq:6} and~\eqref{eq:4a}: since $\modulo{b''(z)}\leq 1$,
   \begin{equation}
   \begin{split}
     \label{eq:4a2ok}
     &\modulo{ \frac{D^c_{-1} - 2 \, D^c_1 + D^c_3}{D^c_1 \, D^c_{-1}}
      \left(\partial_1\eta_c * r_\Omega^{n}\right) (x_{i+3/2,j})}\\
    &\quad\leq 
    2 \, (\dx)^2 \left[
      \norma{\partial_1 \nabla \eta_c}_{\L\infty}^2 \norma{r_\Omega^{n}}_{\L1}^2
    + \norma{\partial_{11}^2 \nabla \eta_c}_{\L\infty} \norma{r_\Omega^{n}}_{\L1}\right].
    \end{split}
   \end{equation}
    Lastly, we consider~\eqref{eq:4c}. As before, by assumption~\eqref{H} we get 
    \begin{align*}
        H_{i+3/2}-H_{i-1/2} 
        & \leq L_H\left(\Tilde{\eta}_c(x_{i+3/2j})*r_\Omega^{n} - \Tilde{\eta}_c(x_{i-1/2,j})*r_\Omega^{n}\right)
        \\
        &\leq
        L_H \, 2\dx\, \norma{\nabla \Tilde{\eta}_c}_{\L\infty}
        \norma{r_\Omega^{n}}_{\L1}
        \end{align*}
   and with~\eqref{eq:2bok} follows 
   \begin{align}
   \label{eq:4a3ok}
   \nonumber
   &\left(H_{i+3/2}-H_{i-1/2}\right)
     \left(\frac{1}{D^c_{-1}} - \frac{1}{D^c_1}\right)
     \left(\partial_1\eta_c * r_\Omega^{n}\right) (x_{i+3/2,j})
     \\
    &\quad \leq L_H\, 2(\dx)^2 \, \norma{\nabla\Tilde{\eta}_c}_{\L\infty}\norma{\nabla^2 \eta_c}_{\L\infty} \norma{r_\Omega^{n}}_{\L1}^2.
    \end{align}
   Collecting together~\eqref{eq:7}, \eqref{eq:4d}, \eqref{eq:4bok}, \eqref{eq:4a1ok}, ~\eqref{eq:4a2ok} and~\eqref{eq:4a3ok} yields
   \begin{align*}
     & \modulo{J_1^{c,n} (x_{i+3/2},y_j) - 2 \, J_1^{c,n} (x_{i+1/2},y_j) + J_1^{c,n} (x_{i-1/2},y_j)}
     \\
     &\quad\leq \
     2 \, \epsilon_c \,  (\dx)^2 \left(
       2 \, \norma{\nabla^3 \eta_c}_{\L\infty} \norma{r_\Omega^{n}}_{\L1}
       + 3 \, \norma{\nabla^2 \eta_c}^2_{\L\infty} \norma{r_\Omega^{n}}^2_{\L1}
       \right.
     \\
     &\quad \quad \quad \quad \quad\quad \quad\quad \left. + 2 \, L_H 
    \norma{\nabla\Tilde{\eta}_c}_{\L\infty}\norma{\nabla^2 \eta_c}_{\L\infty} \norma{r_\Omega^{n}}_{\L1}^2\right).
   \end{align*}
{The proofs of the other inequalities follow analogously.}

 \end{proof}
\end{appendices}

\small{ \bibliography{matflow}

  \bibliographystyle{abbrv} }

\end{document}